\documentclass[reqno,11pt]{amsart}

\usepackage[left=3cm,right=3cm,top=3cm,bottom=3cm,footskip=1cm]{geometry}
\usepackage{graphicx}%
\usepackage{multirow}%
\usepackage{amsmath,amssymb,amsfonts}%
\usepackage{amsthm}%
\usepackage{mathrsfs}%
\usepackage{xcolor}%
\usepackage{listings}%
\usepackage{tikz}
\usepackage{hyperref}
\hypersetup{
    colorlinks=true,
    linkcolor=blue,
    filecolor=magenta,      
    urlcolor=cyan,
    citecolor=blue
    }
\usepackage{enumerate}

\newcommand{\TT}{\mathbb{T}}  
\newcommand{\RR}{\mathbb{R}}  
\newcommand{\NN}{\mathbb{N}}  
\newcommand{\ZZ}{\mathbb{Z}}  
\newcommand{\dd}{\mathrm{d}}  
\newcommand{\EE}{\mathbb{E}}  

\newcommand{\TTint}{\int_\mathbb{T}} 
\newcommand{\RRint}{\int_{\mathbb{R}}} 
\newcommand{\spt}{\mathrm{spt}} 

\newcommand{\coup}{\gamma} 
\newcommand{\Parti}{P_{\coup}} 
\newcommand{\alphaliq}{\alpha_{+}} 
\newcommand{\alphavap}{\alpha_{-}} 
\newcommand{\rholiq}{\rho_{+}} 
\newcommand{\rhovap}{\rho_{-}} 
\newcommand{\Cw}{C_{\mathrm{w}}} 
\newcommand{\Cell}{C_{\mathrm{ell}}} 
\newcommand{\Csob}{C_{\mathrm{Sob}}} 
\newcommand{\Kparti}{K^0_{\Parti}} 
\newcommand{\Ku}{K^0_{u}} 
\newcommand{\Kd}{K^0_{d}} 
\newcommand{\tildeT}{\tilde{T}_0} 
\newcommand{\tildeTT}{\tilde{T}} 
\newcommand{\prim}{{\partial_x^{-1}}} 
\newcommand{\othet}{\overline{\Theta}} 

\newcommand{\weakstar}{\overset{\ast}{\rightharpoonup}} 
\newcommand{\weak}{\rightharpoonup} 

\newcommand{\curlyM}{\mathcal{M}} 
\newcommand{\curlyD}{\mathcal{D}} 
\newcommand{\curlyL}{\mathcal{L}} 
\newcommand{\curlyN}{\mathcal{N}} 

\newtheorem{theorem}{Theorem}[section]
\newtheorem{proposition}[theorem]{Proposition}%
\newtheorem{lemma}[theorem]{Lemma}

\theoremstyle{remark}%
\newtheorem{remark}[theorem]{Remark}%

\theoremstyle{definition}%
\newtheorem{definition}[theorem]{Definition}%

\numberwithin{equation}{section}
\pagestyle{plain}

\begin{document}

\title{Mathematical Justification of a Baer--Nunziato Model for a Compressible Viscous Fluid with Phase Transition}

\author[C. Rohde]{Christian Rohde}
\address[Christian Rohde]{Institute of Applied Analysis and Numerical Simulation, University of Stuttgart, 70569 Stuttgart, Germany}
\email{christian.rohde@mathematik.uni-stuttgart.de}

\author[F. Wendt]{Florian Wendt*}
\address[Florian Wendt]{Institute of Applied Analysis and Numerical Simulation, University of Stuttgart, 70569 Stuttgart, Germany}
\email{florian.wendt@mathematik.uni-stuttgart.de}
\thanks{* Corresponding author}

\subjclass[2020]{35Q35, 76M50, 76N06, 76T10.}

\keywords{Compressible two-phase flow, Navier--Stokes--Korteweg equations, homogenization, parametrized measure.}

\date{}

\dedicatory{}

\begin{abstract}

In this work, we justify a Baer--Nunziato system including appropriate closure terms as the macroscopic description of a compressible viscous fluid that can occur in a liquid or a vapor phase in the isothermal framework.
As a mathematical model for the two-phase fluid on the detailed scale we chose a non-local version of the Navier--Stokes--Korteweg equations in the one-dimensional and periodic setting.
Our justification relies on anticipating the macroscopic description of the two-phase fluid as the limit system for a sequence of solutions with highly oscillating initial densities.
Interpreting the density as a parametrized measure, we extract a limit system consisting of a kinetic equation for the parametrized measure and a momentum equation for the velocity.
Under the assumption that the initial density distributions converge in the limit to a convex combination of Dirac-measures, we show by a uniqueness result that the parametrized measure also has to be a convex combination of Dirac-measures and, that the limit system reduces to the Baer--Nunziato system.\\
This work extends existing results concerning the justification of Baer--Nunziato models as the macroscopic description of multi-fluid models in the sense, that we allow for phase transition effects on the detailed scale.
This work also includes a new global-in-time well-posedness result for the Cauchy problem of the non-local Navier--Stokes--Korteweg equations.

\end{abstract}

\maketitle


\section{Introduction}
We consider a homogeneous compressible viscous fluid that can occur in a liquid and a vapor phase.
A widely accepted mathematical description of such a two-phase fluid is given by the Navier--Stokes--Korteweg (NSK) equations (\cite{McFadden}).
These equations model the two-phase fluid with a diffuse interface, i.e. the interface between the liquid and the vapor is assumed to have small but positive Lebesgue-measure and the density varies rapidly but smoothly over the interface.
Let us assume that we are given some positive time $T>0$.
In the isothermal and periodic framework, the NSK model describes then the dynamics of the two-phase fluid via the fluid's density $\rho\colon[0,T) \times \TT \to \RR_{>0}$ and the fluid's velocity $u \colon [0,T) \times \TT \to \RR$ that obey the system of equations
\begin{alignat}{2}\label{NSK}
    \left\{
    \begin{aligned}
        \partial_t \rho + \partial_x(\rho u) &= 0 
        \quad &&\text{in } (0,T)\times \TT,\\
        \partial_t(\rho u ) + \partial_x(\rho u^2)
        + \partial_xP(\rho) - \mu\partial_{xx}u + \kappa \rho\partial_{xxx} \rho &= 0
        \quad &&\text{in } (0,T)\times \TT,
    \end{aligned}
    \right.
\end{alignat}
with initial conditions
\begin{align}\label{NSK Elliptic initial condition}
    \rho(0,\cdot) = \rho_0, \quad u(0,\cdot) = u_0 \quad \text{in } \TT.
\end{align}
Here, $\mu>0$ denotes the constant viscosity coefficient and $\kappa>0$ denotes the constant capillarity coefficient. 
In order to account for phase transition effects, the equation of state $P \colon[0,\infty) \to [0,\infty)$ is assumed to be of Van-der-Waals type.
More precisely, we shall assume that there exist some constants $0<B_1<B_2<\infty$, such that $P$ is monotonically increasing on $[0,B_1] \cup [B_2,\infty)$ and monotonically decreasing on $[B_1,B_2]$.
Accordingly, we call then the fluid's state liquid (spinodal, vapor), if the fluid's density satisfies $\rho \in [0,B_1)$ ($[B_1,B_2), [B_2,\infty)$).
An illustration of a pressure function of Van-der-Waals type is given in Figure~\ref{Pressure_Figure} in Section~\ref{Main Result}.\\
To find effective equations for the NSK model $\eqref{NSK}$, we start from a sequence of initial data $(\rho_n^0,u_n^0)_{n\in\NN}$, where $n \in \NN$ should display the number of phase transitions that we have initially.
Due to the high number of phase transitions, we expect the initial density sequence $(\rho_n^0)_{n\in\NN}$ to be highly oscillating between the vapor density and the liquid density.
As a simplification of the problem, we assume that such oscillations do not occur for the velocity sequence.
After constructing an appropriate corresponding sequence of solutions $(\rho_n,u_n)_{n\in\NN}$, it is then reasonable to assume that the macroscopic equations are found in the limit $n \to \infty$ (i.e. in the limit where the number of phase transitions tends to infinity).
Thus, the derivation of effective equations reduces to study the propagation of initial density oscillations for system $\eqref{NSK}$.
One method to analyze the propagation of initial density oscillations relies on interpreting the density as a parametrized measure (\cite{Serre,Weinan,HillProp}).
However, system $\eqref{NSK}$ seems hardly accessible for such an investigation due to the capillarity term $\kappa \rho \partial_{xxx}\rho$.
To overcome this remedy, we choose a non-local approximation of $\eqref{NSK}$, that was proposed in \cite{Rohde2}.
In this system, that we call the non-local NSK system from now on, the capillarity term $\kappa\rho\partial_{xxx}\rho$ is substituted by a term of lower order by introducing an additional unknown that satisfies an elliptic equation.
More precisely, in the non-local NSK model, the dynamics of the two-phase fluid are described by the fluid's density $\rho\colon[0,T)\times \TT \to \RR_{\geq0}$, the fluid's velocity $u\colon[0,T)\times \TT \to \RR$ and the order parameter $c\colon[0,T) \times \TT \to \TT$ that satisfy the system of equations
\begin{alignat}{2}\label{NSK Elliptic 1}
    \left\{
    \begin{aligned}
        \partial_t \rho + \partial_x(\rho u) &= 0
        \quad &&\text{in } (0,T)\times \TT,\\
        \partial_t(\rho u) + \partial_x(\rho u^2) + \partial_xP(\rho) -\mu\partial_{xx}u - \coup \rho \partial_x(c-\rho) &= 0 
        \quad &&\text{in } (0,T) \times \TT,\\
        -\kappa\partial_{xx}c + \coup (c-\rho) &= 0 
        \quad &&\text{in } (0,T) \times \TT,
    \end{aligned}
    \right.
\end{alignat}
with initial conditions $\eqref{NSK Elliptic initial condition}$.
Here, the quantities $\mu,\kappa,P$ are defined as before and $\coup>0$ denotes the constant coupling coefficient.
Formally, we notice that for $\coup \to \infty$ we recover the compressible NSK system $\eqref{NSK}$.
At this point we refer to \cite{Neusser} for a numerical solution of $\eqref{NSK Elliptic 1}$ and to \cite{Giesselmann} for a rigorous mathematical result concerning the convergence of $\eqref{NSK Elliptic 1}$ to $\eqref{NSK}$ via a relative entropy approach.\\
The momentum equation $\eqref{NSK Elliptic 1}_2$ can be rewritten as
\begin{align}\label{rewritten momentum}
    \partial_t(\rho u)
    +\partial_x(\rho u^2)
    + \partial_x\Parti(\rho)
    - \mu \partial_{xx}u 
    - \coup \rho \partial_x c &= 0,
\end{align}
with an artificial pressure function
\begin{align}\label{artificial pressure}
    \Parti(r) := P(r) + \frac{\coup}{2} r^2
\end{align}
being monotone for a pressure function $P$ of Van-der-Waals type provided $\coup$ is chosen large enough.
In this paper, we justify the following system as a macroscopic description for a compressible liquid-vapor flow that is modeled with the non-local NSK equations on the detailed scale:
\begin{alignat}{2}\label{BN-System to justify}
    \left\{
    \begin{aligned}
        \partial_t \alphaliq + u \partial_x \alphaliq 
        &= 
        \frac{\alphaliq \alphavap}{\mu} \Bigl(P(\rholiq)- P(\rhovap) + \frac{\gamma}{2}(\rholiq^2 - \rhovap^2)\Bigr) \quad &&\text{in } (0,T) \times \TT,\\
        \partial_t \alphavap + u \partial_x \alphavap
        &=
        \frac{\alphavap \alphaliq}{\mu} \Bigl(P(\rhovap) - P(\rholiq) + \frac{\gamma}{2}(\rhovap^2 - \rholiq^2)\Bigr) \quad &&\text{in } (0,T) \times \TT,\\
        \partial_t \rholiq + \partial_x(\rholiq u)
        &=
        \frac{\rholiq\alphavap}{\mu} \Bigl(P(\rhovap)-P(\rholiq) + \frac{\gamma}{2}(\rhovap^2 - \rholiq^2)\Bigr) \quad &&\text{in } (0,T) \times \TT,\\
         \partial_t \rhovap + \partial_x(\rhovap u)
        &=
        \frac{\rhovap\alphaliq}{\mu} \Bigl(P(\rholiq)-P(\rhovap) + \frac{\gamma}{2}(\rholiq^2 - \rhovap^2)\Bigr) \quad &&\text{in } (0,T) \times \TT,\\
        \partial_t(\rho u) 
        +
        \partial_x(\rho u^2)
        &=
        \mu \partial_{xx}u
        -
        \overline{P}
        +
        \coup \rho \partial_x c \quad &&\text{in } (0,T) \times \TT,\\
        -\kappa\partial_{xx}c + \coup c &= \coup \rho \quad &&\text{in } (0,T) \times \TT,
    \end{aligned}
    \right.
\end{alignat}
where
\begin{align*}
    \rho = \alphaliq \rholiq + \alphavap \rhovap,
    \quad
    \overline{P} = \alphaliq P(\rholiq) + \alphavap P(\rhovap) + \frac{\gamma}{2} ( \alpha_+\rholiq^2 + \alpha_-\rhovap^2 ).
\end{align*}
Here, $\alpha_+, \alpha_-, \rho_+, \rho_- \colon [0,T) \times \TT \to \RR$ denote the volume fraction of the liquid phase, the volume fraction of the vapor phase, the partial density of the liquid phase, the partial density of the vapor phase, respectively.
The effective model $\eqref{BN-System to justify}$ falls into the class of one-velocity Baer--Nunziato (BN) multi-fluid models.
For a detailed background on this class of multi-fluid models see e.g. \cite{BAER1986861, SaurelGodunov, MurroneFive, Hantke}.
For a discussion and interpretation of the macroscopic model $\eqref{BN-System to justify}$, we refer to the end of Section~\ref{Main Result}.
Our justification follows the homogenization methodology in \cite{Hillairet_New_Physical}, where the authors justify a one-velocity multi-fluid BN model for the compressible Navier-Stokes equations with density-dependent viscosity in the one-dimensional periodic framework.
There, the authors analyzed the propagation of initial density oscillations by interpreting the density as a parameterized measure and obtained effective equations in the form of a kinetic equation for the parametrized measure and a momentum equation for the velocity.
After characterizing the parameterized measure as a convex combination of Dirac-measures these equations then reduce to a one-velocity multi-fluid BN system.
Hence, the key point to justify such a multi-fluid model is to prove the propagation of the special structure for the parametrized measure in time.
This methodology was also used in \cite{HillProp,Huang,Hill_Note} to justify rigorously a one-velocity BN model for the compressible isentropic Navier--Stokes equations with constant viscosity in the $3$-D framework.
See also \cite{Plotnikov} for the derivation of the kinetic equation in terms of a probability density function without the characterization of the parametrized measure.
At this point, we also refer to \cite{Lagoutiere}, where the authors extended the methodology for a two-component flow with two different pressure laws for each phase and to \cite{LagoutiereSemi} for a semi-discrete approach and a numerical investigation of the problem setting in \cite{Lagoutiere}.
However, all these results do not incorporate phase transition phenomena in the description of the fluid on the detailed scale.
Thus, this work can be seen as a generalization of the results in \cite{Hillairet_New_Physical} to a setting where phase transition effects are included on the detailed scale.
Mathematically, this generalization requires to deal with a non-monotone pressure law and an additional term in the momentum equation.
The difficulties resulting from the additional term in the momentum equation are twofold.
On the one hand, the a-priori estimates in \cite{Hillairet_New_Physical} rely on the momentum equation, so that it is not clear, whether these carry over to the non-local NSK system.
On the other hand, we have to prove convergence for the additional term in the homogenization process.
We overcome the non-monotonicity of the pressure function by rewriting the momentum equation in terms of the artificial pressure function $\Parti$ (see $\eqref{rewritten momentum}$) that we assume to be monotone (c.f. Section~\ref{Main Result}). 
As a novelty, we verify a-priori estimates corresponding to them in \cite{Hillairet_New_Physical} for the non-local NSK system.
In particular an entropy inequality due to Bresch and Desjardin (c.f. Section~\ref{Construction of Global Strong Solutions}) is derived. 
Then, we solve the convergence issue by using a compensated compactness lemma from \cite{Hillairet_New_Physical}. 
As we shall see, this lemma can be applied here, since the a-priori estimates for the order parameter are strong enough.
Our main tool to control the order parameter is an elliptic regularity estimate. 
\\
This paper is organized as follows. 
In Section~\ref{Main Result}, we precisely formulate the assumptions that we impose on the pressure function $P$ and the coupling parameter $\coup$, and state our main results Theorem~\ref{Thm:Global Existence and Uniqueness} and Theorem~\ref{Thm:Main Result}. 
These are given by a global-in-time well-posedness result (Theorem~\ref{Thm:Global Existence and Uniqueness}) for the Cauchy problem $\eqref{NSK Elliptic 1}, \eqref{NSK Elliptic initial condition}$ and a homogenization result (Theorem~\ref{Thm:Main Result}).
In Section~\ref{Construction of Global Strong Solutions}, we give a full proof of the global-in-time well-posedness result. By providing the proof of this result, we also recover crucial a-priori bounds that are used in Section~\ref{Refined A-Priori Estimates}.
In Section~\ref{Refined A-Priori Estimates}, we collect refined a-priori estimates concerning the effective viscous flux that we need for the homogenization procedure.
In Section~\ref{Homogenization}, we perform the homogenization procedure following the methodology in \cite{Hillairet_New_Physical} and conclude the proof of the homogenization result.
In Section~\ref{Conclusions} we give some conclusions.
In the Appendix, we provide some technical results on the existence of solutions to a specific BN system and a uniqueness result for measure-valued solutions to a kinetic equation.


\section*{Notations}\label{Notation}
We denote the one-dimensional torus of period 1 by $\TT$, i.e. $\TT = \RR / \ZZ$. 
For a domain $D$ and $k \in \NN \cup \{ \infty \}$ we denote by $C^k(D)$ the space of $k$-times continuously differentiable functions and by $C^k_c(D)$ the set all functions in $C^k(D)$ having compact support in $D$.
Also, we write in the context of distributions $\curlyD(D):=C^\infty_c(D)$ for the test function space and $\curlyD^\prime(D)$ for the space of distributions defined on $\curlyD(D)$.
In particular, $C^k(\TT)$ denotes the space of all $k$-times continuously differentiable functions on $\TT$ that can be identified with the space of all $k$-times continuously differentiable 1-periodic functions defined on $\RR$. 
We denote for a bounded function $f$ defined on $D$ its supremum-norm on $D$ as $||f||_{L^\infty(D)}$ and the closure of $C^\infty_c(D)$ under the supremum-norm as $C^0_0(D)$.
Since $\TT$ is compact, we have $C^k_c(\TT) = C^k(\TT)$.
For $p \in [1,\infty]$, we denote the Lebesgue space on $D$ as $L^p(D)$ and the $L^p(D)$-norm as $||\cdot||_{L^p(D)}$. 
Thereby, we denote by $W^{k,p}(D)$ the Sobolev space of order $k$ on $D$ and, if $p=2$, we shortly write $H^k(D)$.
We denote the norm on $W^{k,p}(D)$ as $||\cdot||_{W^{k,p}(D)}$.
Specifically for $D=\TT$, we have
\begin{align*}
    ||f||_{L^p(\TT)} = \biggl(\int_{\TT} |f(x)|^p \:\dd x\biggr)^{\frac{1}{p}} =\biggl( \int_0^1 |f(x)|^p \: \dd x \biggr)^{\frac{1}{p}}
    \quad \text{for } f \in L^p(\TT),\: p \in[1,\infty),
\end{align*}
and we define the mean operator on $L^1(\TT)$ as
\begin{align*}
    \EE \colon L^1(\TT) \to \RR, \quad f \mapsto \EE(f) := \TTint f(x) \: \dd x.
\end{align*}
Then, we denote the $k$-th Sobolev space of mean free functions on $\TT$ as
\begin{align*}
    \dot{H}^k(\TT):= \{ f \in H^k(\TT) \mid \EE(f) = 0\} \subseteq H^k(\TT),
\end{align*}
and, for $k\geq 1$, we denote its dual space as
\begin{align*}
    \dot{H}^{-k}(\TT) := (\dot{H}^k(\TT))^\star.
\end{align*}
It is easy to see that there exists a mean free primitive operator on $\dot{H}^k(\TT)$ for $k \geq 0$ and we denote this operator as
\begin{align*}
    \prim\colon \dot{H}^k(\TT) \to \dot{H}^{k+1}(\TT), \quad f \mapsto \prim f,
\end{align*}
where $\partial_x \prim f = f$. 
This operator straightforwardly extends to negative Sobolev spaces, i.e. to exponents $k <0$, via
\begin{align*}
    \langle \prim f, \phi \rangle := \langle f, \prim \phi \rangle
\end{align*}
for any $f \in \dot{H}^{k}(\TT)$ and any $\phi \in \dot{H}^{k+1}(\TT)$.
For some Banach space $B$ and some time $T>0$, we denote Bochner space of time dependent functions with values in $B$ as $L^p(0,T;B)$ and the Bochner norm on $L^p(0,T;X)$ as $||\cdot||_{L^p(0,T;B)}$. 
Also, for some interval $I \subseteq \RR$, we denote by $C(I;B)$ the space of continuous functions from $I$ into the Banach space $B$.
The space of weakly continuous functions from $I$ into $B$, i.e. the space of all functions from $I$ into $B$ that are continuous with respect to the weak topology on $B$ is denoted by $C_{\mathrm{w}}(I;B)$. 
Note that $L \in \Cw(I;B)$ if and only if for any $\phi \in B^\star$, where $B^\star$ denotes the dual space of $B$, the map
\begin{align*}
    I \ni t \mapsto \langle \phi, L_t \rangle_{B^\star,B} \in \RR
\end{align*}
is a continuous function.
Finally, for a locally compact Hausdorff space $X$, we denote the space of signed finite Radon measures on $X$ as $\curlyM(X)$. This space can be naturally identified with the dual space of $C^0_0(X)$. For some $\mu \in \curlyM(X)$, we denote its total variation norm as 
\begin{align*}
    ||\mu||_{\curlyM(X)} := \sup\limits_{\phi \in C^0_0(X),\: |\phi|\leq 1} \langle \mu, \phi \rangle.
\end{align*}
The space of positive finite Radon measures on $X$ is denoted by $\curlyM^+(X)$, the space of compactly supported finite Radon measures on $X$ is defined by $\curlyM_c(X)$ and the space of positive compactly supported finite Radon measures on $X$ is denoted by $\curlyM^+_c(X)$.


\section{The Main Result}\label{Main Result}

The first result of this paper is the global-in-time existence of strong solutions for the Cauchy problem $\eqref{NSK Elliptic 1}, \eqref{NSK Elliptic initial condition}$.
This result requires the artificial pressure function $\Parti$ (see $\eqref{artificial pressure}$) to be monotone (c.f. Section~\ref{Construction of Global Strong Solutions}). 
This will impose a certain relation between the pressure function $P$ and the coupling parameter $\coup$.
In the following definition, we make precise what kind of assumptions on $P$ and $\coup$ we need in this paper.
See \cite{Wolff} for a corresponding definition.

\begin{definition}[Admissible Pressure Function]\label{Defi:Admissible Pressure Function}
    Let $P\colon [0,\infty) \to [0,\infty)$ and $\coup>0$ satisfy
    \begin{enumerate}[1.]
        \item \label{condition_one_admissible_pressure}  $P \in C^1([0,\infty))$, $P(0) = 0$,
        \item \label{condition_two_admissible_pressure} there exist two constants $\beta\in [2,\infty)$ and $a \in (0,\infty)$, such that $\lim\limits_{r \to \infty} \frac{P^\prime(r)}{r^{\beta - 1}} = a>0$,
        \item the artificial pressure function $\Parti$ defined in $\eqref{artificial pressure}$ is monotonically increasing on $[0,\infty)$.
    \end{enumerate}
    Then we call the pair $(P,\gamma)$ \textit{admissible}.
\end{definition}


\begin{remark}
    The Van-der-Waals pressure law is a widely accepted equation of state for modeling two-phase phenomena of a homogeneous fluid. 
    At a constant reference temperature $T_\star>0$, it is given by
    \begin{align*}
        P_{\mathrm{VdW}}\colon [0,B) \to [0,\infty), \quad \rho \mapsto
        P_{\mathrm{VdW}}(\rho) := \frac{RT_\star \rho}{B-\rho} - A \rho^2,
    \end{align*}
    where $A,B,R>0$ are material coefficients. 
    For the reference temperature $T_\star>0$ small enough, more precisely
    \begin{align*}
        T_\star <\frac{AB^2}{R},
    \end{align*}
    the pressure function $P_{\mathrm{VdW}}$ admits a spinodal region in the sense of the preceding discussion.
    The derivative of $P_{\mathrm{VdW}}$ is given by
    \begin{align*}
        P_{VdW}^\prime(r) = \frac{BRT_\star}{(B-r)^2} - 2Ar.
    \end{align*}
    In particular, we have that, up to a change of coordinate, $(P_{\mathrm{VdW}},\coup)$ is admissible for 
    \begin{align*}
        \coup\geq 2A.
    \end{align*}
\end{remark}
Let us assume that we are given some admissible pair $(P,\coup)$.
Then we associate to $P\colon[0,\infty) \to [0,\infty)$ some pressure potential $W\colon[0,\infty) \to [0,\infty)$ through the relation
\begin{align}\label{relation pressure potential}
    P^\prime(r) = W^{\prime\prime}(r) r.
\end{align}
We notice that monotonically increasing/decreasing regions of $P$ correspond to convex/concave regions of $W$.
\begin{remark}
    Condition \ref{condition_two_admissible_pressure} in Definition~\ref{Defi:Admissible Pressure Function} implies that there exists some positive constant $C_\rho$, such that
    \begin{align}\label{bound for rbeta}
        r^2 \leq C_\rho + C_\rho W(r) \quad \forall \, r \in [0,\infty).
    \end{align}
\end{remark}

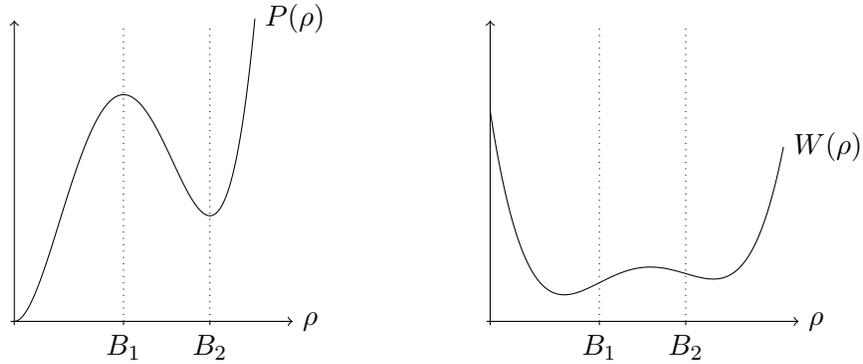
\begin{figure}[h]
\centering
\begin{tikzpicture}
  \draw[->] (-0.05,0) -- (3.7,0) node[right] {$\rho$};
  \draw[->] (0,-0.05) -- (0,4.0);
  \draw[dotted] (1.452,0) -- (1.452,3.9);
  \draw[dotted] (2.6,0) -- (2.6,3.9);
  \draw[color=black, domain=0.001:3.2, samples=200] plot (\x,{0.7*(1.125*\x^4-6.08*\x^3+8.5*\x^2)}) node[right] {$P(\rho)$};
  \draw (1.452,0) -- (1.452,-0.05) node[below] {$B_1$};
  \draw (2.6,0) -- (2.6,-0.05) node[below] {$B_2$};
  \draw[->] [xshift = 180.0] (-0.05,0) -- (4.1,0) node[right] {$\rho$};
  \draw[->] [xshift = 180.0] (0,-0.05) -- (0,4.0);
  \draw[color=black, domain=0:3.9, samples=200, xshift = 180.0] plot (\x,{0.7*(0.375*\x^4-3.04*\x^3+8.5*\x^2-9.33*\x+4)}) node[right] {$W(\rho)$};
  \draw[dotted, xshift = 180.0] (1.452,0) -- (1.452,3.9);
  \draw[dotted, xshift = 180.0] (2.6,0) -- (2.6,3.9);
  \draw[xshift = 180.0] (1.452,0) -- (1.452,-0.05) node[below] {$B_1$};
  \draw[xshift = 180.0] (2.6,0) -- (2.6,-0.05) node[below] {$B_2$};
\end{tikzpicture}
\caption{Left: Example for a pressure function $P(\rho)$ of Van-der-Waals type. Right: A corresponding pressure potential $W(\rho)$.}\label{Pressure_Figure}
\end{figure}

Our first result then reads as follows:

\begin{theorem}\label{Thm:Global Existence and Uniqueness}
    Let $\kappa,\mu,\gamma,M_0>0$ and assume that $(P,\gamma)$ is admissible.
    Let $u_0, \rho_0 \in H^1(\TT)$ with $0 < M_0^{-1} \leq \rho_0(x) \leq M_0$ for all $x \in \TT$.\\
    Then there exist unique functions
    \begin{alignat*}{2}
        &\rho \in C([0,\infty);H^1(\TT)), \quad \rho>0 \quad \text{on } [0,\infty)\times\TT, \quad &&\partial_t \rho \in C([0,\infty);L^2(\TT)),\\ 
        &u \in  C([0,\infty);H^1(\TT)) \cap L^2_{\mathrm{loc}}(0,\infty;H^2(\TT)),\quad &&\partial_t u \in L^2_{\mathrm{loc}}(0,\infty;L^2(\TT)),\\
        &c \in C([0,\infty);H^3(\TT)),
    \end{alignat*}
    that satisfy $\eqref{NSK Elliptic 1}$ a.e. on $(0,\infty)\times\TT$, the initial conditions $\eqref{NSK Elliptic initial condition}$ a.e. on $\TT$ and
    \begin{align*}
        -\kappa \partial_{xx}c(t) + \coup c(t) = \coup \rho(t) \quad \text{a.e. on } \TT
    \end{align*}
    for any $t \in [0,\infty)$.
\end{theorem}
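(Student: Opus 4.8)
The plan is to follow the classical scheme for one-dimensional compressible Navier--Stokes-type systems: construct a local-in-time strong solution by a fixed-point argument, derive a-priori bounds that do not degenerate on finite time intervals (a basic energy estimate, a Bresch--Desjardins-type estimate, and two-sided pointwise density bounds propagated along characteristics), bootstrap these into the regularity asserted in the theorem, extend the solution to $[0,\infty)$ by a continuation criterion, and finally deduce uniqueness from a Gronwall estimate for the difference of two solutions.

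\textbf{Local existence.} I would decouple the three equations in an iteration scheme. Given $\rho$ with $0<c_1\le\rho\le c_2$ and $c$ recovered from $-\kappa\partial_{xx}c+\coup c=\coup\rho$ (elliptic regularity on $\TT$ gives $c\in H^3$ with $\|c\|_{H^3}\lesssim\|\rho\|_{H^1}$), the rewritten momentum equation $\eqref{rewritten momentum}$ is a linear uniformly parabolic equation for $u$ — the leading coefficient $\rho$ of $\partial_t u$ being bounded away from zero — with right-hand side $-\partial_x\Parti(\rho)+\coup\rho\,\partial_x c$; solving it and then integrating the transport equation $\partial_t\rho+\partial_x(\rho u)=0$ along characteristics, which preserves $H^1$ regularity and yields the bounds $M_0^{-1}\exp(-\int_0^t\|\partial_x u\|_{L^\infty}\,\dd s)\le\rho(t)\le M_0\exp(\int_0^t\|\partial_x u\|_{L^\infty}\,\dd s)$ on a short interval, closes the loop. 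A contraction in, say, $C([0,T_*];H^1)$ for $\rho$ and $u$ with $T_*$ small produces a local solution with the stated regularity; $\rho\in C([0,\infty);H^1)$ and $\partial_t\rho\in C([0,\infty);L^2)$ then follow directly from the mass equation, and $c$ is slaved to $\rho$ pointwise in time.

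\textbf{A-priori estimates and bootstrap.} Mass is conserved: $\EE(\rho(t))=\EE(\rho_0)$. Testing $\eqref{rewritten momentum}$ with $u$, using the pressure potential of $\Parti$ (which equals $W+\tfrac{\coup}{2}r^2$ up to an affine function) and the elliptic equation — so that $\coup\int\rho u\,\partial_x c$ becomes a time derivative and recombines with the $\tfrac{\coup}{2}\rho^2$ part into a manifestly non-negative capillarity contribution controlling $\|\partial_x c\|_{L^2}^2$ and $\|\partial_{xx}c\|_{L^2}^2$ — yields the energy identity, hence uniform-in-time control of $\sqrt\rho\,u$, $W(\rho)$, $\partial_x c$, $\partial_{xx}c$ and an $L^2$-in-time bound on $\partial_x u$; combined with $\eqref{bound for rbeta}$ and mass conservation this gives $\rho\in L^\infty(0,T;L^2)$. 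Next I would establish the Bresch--Desjardins inequality by testing with the effective velocity $u+\mu\partial_x\log\rho$ (equivalently, combining the momentum equation with $\partial_x$ of the mass equation): admissibility makes $\Parti$ monotone, so the pressure contribution has the right sign and produces the dissipation $\int\Parti'(\rho)\rho^{-1}|\partial_x\rho|^2\,\dd x$, while the extra force $\coup\rho\,\partial_x c$ is absorbed using the $H^2$-bound on $c$; this yields $\sqrt\rho\in L^\infty(0,T;H^1)$. The pointwise density bounds then hold provided $\int_0^T\|\partial_x u\|_{L^\infty}\,\dd s<\infty$, and once $\rho$ is two-sided bounded and in $H^1$, parabolic regularity for $\eqref{rewritten momentum}$ gives $u\in L^2(0,T;H^2)\cap C([0,T];H^1)$ and $\partial_t u\in L^2(0,T;L^2)$, whence $u\in L^1(0,T;W^{1,\infty})$ by Sobolev embedding in one dimension. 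A continuation argument on the maximal interval where the density bounds hold shows they persist on all of $[0,T]$, and elliptic regularity upgrades $c$ to $C([0,T];H^3)$.

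\textbf{Globality, uniqueness, and the main difficulty.} Since the energy and Bresch--Desjardins estimates are global Gronwall-type inequalities without finite-time blow-up, all the above bounds are finite on every $[0,T]$; the local construction can therefore be iterated, and the continuation criterion ``$\|\rho(t)\|_{H^1}+\|1/\rho(t)\|_{L^\infty}+\|u(t)\|_{H^1}\to\infty$ at a finite maximal time'' is excluded, so the solution exists on $[0,\infty)$. For uniqueness I would write the system for the differences $\delta\rho,\delta u,\delta c$ of two solutions with the same data, test with $\delta u$ and with $\prim\delta\rho$, control $\delta c$ by $\delta\rho$ via the elliptic estimate, and absorb every term using the already-established regularity to reach a Gronwall inequality forcing the differences to vanish. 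I expect the main obstacle to be the Bresch--Desjardins step: one must check that it survives the passage from the density-dependent-viscosity, monotone-pressure setting of the references to the present constant-viscosity non-local system with a Van-der-Waals pressure — which is exactly where the rewriting through the monotone $\Parti$ and the elliptic regularity of $c$ are essential — and that the resulting density bound is genuinely global in time; the intertwined bootstrap (density bounds $\Rightarrow$ parabolic regularity of $u$ $\Rightarrow$ $W^{1,\infty}$-control of $u$ $\Rightarrow$ density bounds) is the other point requiring care.
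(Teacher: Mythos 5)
Your overall scaffolding (local existence by fixed point, energy estimate, Bresch--Desjardins-type estimate, two-sided density bounds, bootstrap to $H^2$-parabolic regularity for $u$, continuation) matches the paper's strategy, and you correctly identify that the admissibility condition and the elliptic regularity of $c$ are the two ingredients that make the classical machinery survive the non-local Korteweg force. But there is a concrete gap in the way you close the two-sided density bound, and it is precisely at the point you yourself flag as ``requiring care.''

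The gap is the circularity in your chain ``density bounds $\Rightarrow$ parabolic regularity of $u$ $\Rightarrow$ $W^{1,\infty}$-control of $u$ $\Rightarrow$ density bounds.'' You propose to obtain the pointwise density bounds by integrating along characteristics, and this needs $\int_0^T\|\partial_x u\|_{L^\infty}\,\dd t<\infty$ beforehand, which in turn needs the density bounds. The paper breaks this loop by a different choice of BD multiplier: instead of $u+\mu\partial_x\log\rho$ it tests with $u+\partial_x\varphi(\rho)$ where $\varphi(r)=\int_1^r\mu s^{-2}\,\dd s$, so that $\rho|\partial_x\varphi(\rho)|^2=4\mu^2|\partial_x(\rho^{-1/2})|^2$. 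Consequently the BD entropy functional controls $\|\partial_x(\rho^{-1/2})\|_{L^\infty(0,T;L^2(\TT))}$, and combined with mass conservation (which bounds $\EE[\rho^{-1/2}]$), Poincar\'e and the embedding $H^1(\TT)\hookrightarrow L^\infty(\TT)$ this gives the \emph{lower} bound on $\rho$ directly, with no reference whatsoever to the velocity. The upper bound then follows from $\partial_x\sqrt\rho=-\rho^{3/2}\partial_x(\rho^{-1/2})$ together with the $L^2$ control on $\rho$ and $W^{1,1}(\TT)\hookrightarrow L^\infty(\TT)$ (Lemma~\ref{lem:Linfty density}). Thus the paper's chain is energy $\Rightarrow$ BD $\Rightarrow$ density bounds $\Rightarrow$ $\rho\in L^\infty_tH^1_x$ $\Rightarrow$ $u\in L^\infty_tH^1_x\cap L^2_tH^2_x$, with each step resting only on the previous ones.

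Your multiplier $u+\mu\partial_x\log\rho$ fails on two counts here. First, the BD functional it produces controls only $\|\partial_x\sqrt\rho\|_{L^2}$, which gives the upper bound on $\rho$ but not the lower one (Van-der-Waals $W$ does not blow up at $\rho=0$, so the energy estimate does not rescue you either); hence you are forced back onto characteristics and the circularity. Second, the algebra in the BD derivation (\eqref{prop:BD:proof:eq4}) uses the identity $\varphi'(\rho)\rho^2|\partial_x u|^2$ to cancel against the viscous dissipation $-\mu|\partial_xu|^2$ from the energy estimate; this requires $\varphi'(\rho)\rho^2=\mu$, i.e.\ $\varphi'(\rho)=\mu\rho^{-2}$, and does not go through for $\varphi(\rho)=\mu\log\rho$ (which gives $\varphi'(\rho)\rho^2=\mu\rho$). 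So replace your multiplier by the paper's; once the density bounds are established velocity-free, your remaining steps (parabolic regularity for $u$, elliptic upgrade of $c$ to $H^3$, continuation, and a Gronwall-based uniqueness proof) go through as you describe. A small further remark: you attribute the $\|\partial_{xx}c\|_{L^2}$ bound to the energy estimate, whereas it actually comes from the elliptic estimate $\|c(t)\|_{H^2}\leq\Cell\|\rho(t)\|_{L^2}$ once $\rho\in L^\infty_tL^2_x$ is known.
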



The global-in-time result Theorem~\ref{Thm:Global Existence and Uniqueness} is then used to formulate our second result, which is the main result of this paper.
Before we state this result, let us recall how a $L^\infty(\TT)$-function gives rise to a positive finite Radon measure on $\TT \times \RR$. Let $f \in L^\infty(\TT)$. Then we can define
\begin{align*}
    \Theta\colon C^0_0(\TT \times \RR) \to \RR,
    \quad 
    b \mapsto \langle \Theta, b \rangle := \TTint b(x,f(x)) \:\dd x.
\end{align*}
It is an easy matter to check that this defines a positive linear functional on $C^0_0(\TT \times \RR)$, so that indeed $\Theta \in \curlyM^+(\TT \times \RR)$. 
If we assume furthermore that $f$ is bounded from above and below, i.e. 
\begin{align*}
    C_1 \leq f(x)  \leq C_2 \quad \text{for a.e. } x \in \TT,
\end{align*}
where $C_1<C_2$ are two constants, then we obtain that $\Theta$ has compact support, more precisely
\begin{align*}
    \spt \, (\Theta) \subseteq \TT \times [C_1,C_2].
\end{align*}
This implies $\Theta \in \curlyM_c^+(\TT \times \RR)$. We will use these notions in the following main result, which justifies a one-velocity BN system as a macroscopic description for a compressible liquid-vapor flow that is described on the detailed scale by the non-local NSK system \eqref{NSK Elliptic 1}.

\begin{theorem}[Main Result]\label{Thm:Main Result}
    Let $\kappa,\mu,\gamma,M_0>0$ and assume that $(P,\gamma)$ is admissible.
    Suppose that $\rho_n^0, u_n^0 \in H^1(\TT)$ satisfy 
    \begin{align*}
         M_0^{-1} \leq \rho_n^0(x) \leq M_0 \quad \forall\, x \in \TT,\quad \forall \,n \in \NN, \qquad
        \sup\limits_{n\in\NN} \bigl\{||u_n^0||_{H^1(\TT)} \bigr\} < \infty,
    \end{align*}
    for some positive constant $M_0$ and, that $u_n^0 \weak u_0$ weakly in $H^1(\TT)$.
    For $n \in \NN$, let $(\rho_n,u_n,c_n)$ denote the global-in-time strong solution of $\eqref{NSK Elliptic 1}, \eqref{NSK Elliptic initial condition}$ with initial data $(\rho_n^0,u_n^0)$ (c.f. Theorem~\ref{Thm:Global Existence and Uniqueness}).
    Assume that there exists $\alphaliq^0,\alphavap^0,\rholiq^0,\rhovap^0 \in L^\infty(\TT)$ with
    \begin{align*}
        0 \leq \alpha_{0\pm} \leq 1, \quad \alpha_{0+} + \alpha_{0-} = 1,\quad
        M_0^{-1} \leq \rho_{0\pm} \leq M_0 \quad \text{a.e. on } \TT,
    \end{align*}
    such that $\Theta_n^0 \in \curlyM_c^+(\TT \times \RR)$, defined through
    \begin{align*}
        \langle \Theta_n^0, b \rangle := \TTint b(x,\rho_n^0(x))\, \dd x \quad  \forall\, b \in C^0_0(\TT \times \RR),
    \end{align*}
    satisfies
    \begin{align}\label{assumption:initial density distribution}
        \langle \Theta_n^0, b \rangle 
        \longrightarrow
        \TTint \alphaliq^0(x) b(\rholiq^0(x)) + \alphavap^0(x) b(\rhovap^0(x)) \, \dd x
        \quad \forall \,b \in C^0_0(\TT \times \RR).
    \end{align}
    Then there exist some time $T_1>0$, $\alphaliq,\alphavap,\rholiq,\rhovap\in L^\infty(0,T_1;L^\infty(\TT)) \cap C([0,T_1];L^1(\TT))$ and $u \in L^\infty(0,T_1;H^1(\TT)) \cap C([0,T_1];C(\TT))$, such that (up to a subsequence) $\Theta_n \in \Cw([0,T_1];\curlyM^+(\TT \times \RR))$, defined for any $t \in [0,T_1]$ through
    \begin{align*}
        \langle \Theta_n(t), b \rangle 
        := \TTint b(x,\rho_n(t,x)) \, \dd x \quad \forall \, b \in C^0_0(\TT \times \RR),
    \end{align*}
    converges in $\Cw([0,T_1];\curlyM^+(\TT \times \RR))$ to $\Theta$, where $\Theta$ is given for any $t \in [0,T_1]$ through
    \begin{align*}
        \langle \Theta(t) , b \rangle 
        = \TTint \alphaliq(t,x) b(x, \rholiq(t,x)) 
        +
        \alphavap(t,x) b(x,\rhovap(t,x)) \,\dd x
        \quad \forall\, b \in C^0_0(\TT \times \RR).
    \end{align*}
    Moreover, we have
    \begin{align*}
        u_n \to u \quad \text{in } C([0,T_1];C(\TT)),
        \quad 
        c_n \weak c \quad \text{in } L^2(0,T_1;H^2(\TT)).
    \end{align*}
    The functions
    $\alpha_+, \alpha_-,\rho_+,\rho_-,u,c$ satisfy the BN system $\eqref{BN-System to justify}$ in $\curlyD^\prime((0,T_1)\times \TT)$ and initial conditions
    \begin{align*}
        \alphaliq(0,\cdot)=\alphaliq^0,
        \quad
        \alphavap(0,\cdot) = \alphavap^0,
        \quad
        \rholiq(0,\cdot) = \rholiq^0,
        \quad
        \rhovap(0,\cdot) = \rhovap^0 \quad
        u(0,\cdot) = u_0
    \end{align*}
    a.e. in $\TT$.
\end{theorem}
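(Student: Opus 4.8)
The plan is to follow the homogenization methodology of \cite{Hillairet_New_Physical}, adapted to the non-local NSK system, in four stages: (i) extract uniform-in-$n$ a-priori bounds; (ii) pass to the limit in the momentum equation and the elliptic equation; (iii) derive a kinetic (transport) equation for the parametrized measure $\Theta$; and (iv) use a uniqueness result for that kinetic equation to show that the special two-Dirac structure of $\Theta_n^0$ propagates, so that the limit system collapses to $\eqref{BN-System to justify}$.

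First I would collect the a-priori estimates. From Theorem~\ref{Thm:Global Existence and Uniqueness} and the Bresch--Desjardins-type entropy inequality referred to in Section~\ref{Construction of Global Strong Solutions}, one obtains bounds (uniform in $n$, on a time interval $[0,T_1]$ whose length depends only on $M_0$, $\mu$, $\kappa$, $\gamma$, and $\sup_n\|u_n^0\|_{H^1}$) of the form: $M_0^{-1}\lesssim \rho_n \lesssim M_0$ pointwise on a short time (via the continuity equation and control of $\partial_x u_n$), $u_n$ bounded in $L^\infty(0,T_1;H^1(\TT))$ and in $L^2(0,T_1;H^2(\TT))$, $\partial_t(\rho_n u_n)$ bounded in $L^2(0,T_1;H^{-1}(\TT))$, and — crucially — the order parameter $c_n$ bounded in $L^\infty(0,T_1;H^1(\TT))$ and $L^2(0,T_1;H^3(\TT))$ by elliptic regularity applied to $-\kappa\partial_{xx}c_n + \gamma c_n = \gamma\rho_n$ together with the density bounds. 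These yield, by Aubin--Lions and the one-dimensional Sobolev embedding $H^1(\TT)\hookrightarrow C(\TT)$, strong convergence $u_n \to u$ in $C([0,T_1];C(\TT))$ and $c_n \rightharpoonup c$ in $L^2(0,T_1;H^2(\TT))$ with $c_n\to c$ strongly in, say, $C([0,T_1];C(\TT))$ as well. The parametrized measures $\Theta_n$ are uniformly bounded in $\Cw([0,T_1];\curlyM^+(\TT\times\RR))$ with uniformly compact support $\TT\times[M_0^{-1},M_0]$, so a diagonal/subsequence argument gives $\Theta_n \to \Theta$ in $\Cw([0,T_1];\curlyM^+(\TT\times\RR))$; the assumption $\eqref{assumption:initial density distribution}$ fixes the initial value $\Theta(0)$ as the stated convex combination of two Diracs.

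Next I would pass to the limit in the equations. Writing the momentum equation in the form $\eqref{rewritten momentum}$ with the \emph{monotone} artificial pressure $\Parti$, the only genuinely nonlinear terms are $\partial_x\Parti(\rho_n)$ and the coupling term $\gamma\rho_n\partial_x c_n$. For the coupling term, strong convergence of $c_n$ (or of $\partial_x c_n$ in $L^2(0,T_1;L^2(\TT))$, again from the $H^2$ bound and compactness) against the weak-$\ast$ limit of $\rho_n$ suffices. For $\partial_x\Parti(\rho_n)$ one uses the effective-viscous-flux quantity $F_n := \Parti(\rho_n) - \mu\partial_x u_n$ (up to the coupling contribution), whose refined estimates are exactly the subject of Section~\ref{Refined A-Priori Estimates}, together with the compensated compactness lemma of \cite{Hillairet_New_Physical}: this lemma lets one identify weak limits of products such as $\Parti(\rho_n)\,u_n$ and, more importantly, gives the weak continuity needed to write the limiting momentum equation, where the nonlinear density-dependent terms are replaced by their integrals against $\Theta$, i.e. by $\int_\RR \Parti(s)\,\dd\Theta(t,x)(s)$ etc. Testing the continuity equation $\partial_t\rho_n + \partial_x(\rho_n u_n)=0$ against $b'(\rho_n)\varphi$-type test functions (formally, $\partial_t b(\rho_n) + \partial_x(b(\rho_n)u_n) + (b'(\rho_n)\rho_n - b(\rho_n))\partial_x u_n = 0$) and passing to the limit produces the kinetic equation for $\Theta$: for all $b\in C^1$ and test functions, $\partial_t\langle\Theta,b\rangle + \partial_x\langle\Theta, b\, u\rangle + \langle\Theta,(s b'(s)-b(s))\partial_x u\rangle = \text{(source from the pressure/relaxation structure)}$. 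The relaxation right-hand sides in $\eqref{BN-System to justify}$ (the $\tfrac{\alpha_+\alpha_-}{\mu}(\cdots)$ terms) come precisely from the $\partial_x u$ appearing in the kinetic equation once $\partial_x u$ is expressed, via the limiting momentum/effective-flux relation, in terms of $\overline P$ and the pressures evaluated at $\rho_\pm$.

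The main obstacle, and the heart of the argument, is stage (iv): showing that $\Theta(t)$ remains a convex combination of (at most) two Diracs for $t\in[0,T_1]$, given that $\Theta(0)$ has this form. The strategy is to note that the pair $(u, \partial_x u)$ built above is fixed (it has been identified as a function in $L^\infty(0,T_1;H^1)\cap C([0,T_1];C(\TT))$), so the kinetic equation for $\Theta$ becomes a linear transport equation in the $(t,x)$ variables with the measure-valued unknown $\Theta$, and one invokes the uniqueness result for measure-valued solutions of this kinetic equation stated to be proved in the Appendix. Uniqueness forces $\Theta$ to coincide with the measure-valued solution obtained by transporting each of the two Diracs of $\Theta(0)$ along the characteristics of $u$ — which is again, at each time, a convex combination of two Diracs, with atoms $\rho_\pm(t,x)$ and weights $\alpha_\pm(t,x)$ solving the ODE/transport system making up the first four equations of $\eqref{BN-System to justify}$. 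The regularity claims on $\alpha_\pm,\rho_\pm$ (namely $L^\infty\cap C([0,T_1];L^1)$) follow from this representation together with the regularity of the velocity field. Substituting this Dirac structure back into the limiting momentum equation and elliptic equation turns $\int_\RR \Parti(s)\,\dd\Theta$ into $\overline P + \tfrac{\gamma}{2}(\alpha_+\rho_+^2+\alpha_-\rho_-^2)$ arranged exactly as in $\eqref{BN-System to justify}_5$–$\eqref{BN-System to justify}_6$, and the passage of initial data through the strong $C([0,T_1];L^1)$ / $C([0,T_1];C(\TT))$ convergences yields the stated initial conditions, completing the proof. The delicate points to be careful about are: justifying the chain rule / renormalization for $b(\rho_n)$ at the $H^1$ regularity available (standard in 1D but worth citing), ensuring the compensated compactness lemma's hypotheses are met thanks to the strong control on $c_n$, and checking that the time $T_1$ from the a-priori estimates is uniform in $n$ so that the whole construction lives on a common interval.
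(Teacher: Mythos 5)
Your proposal is correct and follows essentially the same four-stage strategy as the paper: uniform a-priori bounds on a short time interval, passage to the limit in the PDEs, derivation of a kinetic equation for the parametrized measure, and propagation of the two-Dirac structure via a uniqueness argument. Two small remarks on how the paper executes the details differently. First, for the coupling term $\gamma\rho_n\partial_x c_n$ (and more generally for products $\beta(\rho_n)\partial_x c_n$ in deriving the kinetic equation), the paper does not use strong convergence of $c_n$; it applies the same compensated-compactness lemma used for $\beta(\rho_n)\Sigma_n$, relying on the uniform $L^2(0,T_0;H^1)$ bound on $\partial_x c_n$. Your alternative — strong convergence of $\partial_x c_n$ via an Aubin--Lions argument — is also viable, but you would still have to supply the time-derivative bound on $c_n$ (obtainable by differentiating the elliptic equation and using the continuity equation), a step you gloss over. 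Second, the logical order of the closure is slightly different from what you describe: the candidate functions $(\alpha_\pm,\rho_\pm)$ are first constructed (Appendix, via a fixed point) as solutions of a BN-type system in which the source is the \emph{already-identified} weak-$\ast$ limit $\overline P$, not $\alpha_+\Parti(\rho_+)+\alpha_-\Parti(\rho_-)$. Only after the uniqueness theorem forces $\Theta=\overline\Theta$ can one conclude $\overline P=\alpha_+\Parti(\rho_+)+\alpha_-\Parti(\rho_-)$ and $\alpha_++\alpha_-=1$ (the latter via a separate transport-uniqueness argument that you do not mention), which then collapses the constructed system into $\eqref{BN-System to justify}$. Your phrasing "weights $\alpha_\pm$ solving the first four equations of $\eqref{BN-System to justify}$" puts the cart before the horse; nevertheless the underlying idea — build a candidate, show it satisfies the same kinetic equation, apply uniqueness — is exactly the paper's.
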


To the end of this section let us discuss how our results justify the BN system $\eqref{BN-System to justify}$ as the macroscopic description of a two-phase fluid modeled by the non-local NSK system on the detailed scale.
We start with a sequence of initial data $(\rho_n^0,u_n^0)_{n\in\NN}$ and construct the corresponding sequence of solutions $(\rho_n,u_n,c_n)_{n\in\NN}$ according to Theorem~\ref{Thm:Global Existence and Uniqueness}.
We anticipate to find the macroscopic description of the two-phase fluid in the limit $n \to \infty$, when the initial density sequence highly oscillates between the liquid and the vapor density, for instance we could have
\begin{align*}
    \rho^0_n(x) = \rho^0(nx)
\end{align*}
for some density profile $\rho^0 \in H^1(\TT)$ bounded below by some positive constant.
The continuity assumption on the initial density is physically reasonable here, since we deal with a diffuse interface model.
Since we expect that the initial density sequence is highly oscillating between the liquid and the vapor density, it is reasonable to assume that the corresponding sequence of initial density distributions $(\Theta_n^0)_{n\in\NN}$ concentrates in the limit on two values in the sense of assumption $\eqref{assumption:initial density distribution}$.
Under this assumption, Theorem~\ref{Main Result} tells us then that for a small time, after identifying the density sequence $(\rho_n)_{n\in\NN}$ as a sequence of parametrized measures $(\Theta_n)_{n\in\NN}$, we find a limit velocity $u$ and a limit density distribution $\Theta$, such that $(\Theta_n,u_n)_{n\in\NN}$ converges to $(\Theta,u)$ in an appropriate way and moreover, the structure of the initial density distribution is preserved with functions $\alpha_+,\alpha_-,\rho_+,\rho_-$ that solve the BN system $\eqref{BN-System to justify}$.
Agreeing that $(\Theta,u)$ describes our two-phase fluid on the macroscopic scale, this justifies the BN system $\eqref{BN-System to justify}$ as the macroscopic description.
Since on the detailed scale, the NSK system $\eqref{NSK}$ approximates $\eqref{NSK Elliptic 1}$ for $\coup \to \infty$, it would be interesting, whether one could find a limit system for $\coup \to \infty$ in $\eqref{BN-System to justify}$ and whether the approximation $\coup \to \infty$ commutes with the homogenization limit.
The limit BN system for $\coup\to \infty$ could then be interpreted as a macroscopic model for the NSK system $\eqref{NSK}$.



\section{Construction of Global-in-Time Strong Solutions}\label{Construction of Global Strong Solutions}
In this section, we provide the proof of Theorem~\ref{Thm:Global Existence and Uniqueness}.
The uniqueness of strong solutions for the Cauchy problem $\eqref{NSK Elliptic 1},\eqref{NSK Elliptic initial condition}$ can be proven by classical arguments (see e.g. in \cite{Valli}).
Therefore, we omit the proof for the uniqueness part here and focus on the existence part.
For this part, we follow the lines in \cite{Mellet} and provide a continuation argument for the local-in-time strong solution.
This continuation argument relies on suitable a-priori estimates for the local-in-time strong solution.
In comparison to the compressible Navier--Stokes equations, we have to deal with the additional term $\coup \rho \partial_x c$ in the momentum equation.
Exploiting elliptic regularity results, we are able to control this term in such a way, that the a-priori estimates from \cite{Mellet} carry over.
Our proof relies on the following local-in-time existence result for the Cauchy problem $\eqref{NSK Elliptic 1}, \eqref{NSK Elliptic initial condition}$.

\begin{proposition}\label{prop:local-in-time}
    Let the hypothesis of Theorem~\ref{Thm:Global Existence and Uniqueness} hold true.\\
    Then there exist a time $T_0>0$ that only depends on
    \begin{align*}
        ||u_0||_{H^1(\TT)}, ||\rho_0||_{H^1(\TT)}, M_0, \coup,\kappa, \mu,
    \end{align*}
    and unique functions
    \begin{alignat*}{2}
        &\rho \in C([0,T_0);H^1(\TT)), \quad \rho >0 \quad \text{on } [0,T_0) \times \TT, \quad &&\partial_t \rho \in C([0,T_0);L^2(\TT)),\\
        &u \in C([0,T_0);H^1(\TT)) \times L^2(0,T_0;H^2(\TT)), \quad &&\partial_tu \in L^2(0,T_0;L^2(\TT)),\\
        &c \in C([0,T_0);H^3(\TT))
    \end{alignat*}
    that satisfy $\eqref{NSK Elliptic 1}$ a.e. on $(0,T_0)\times\TT$, the initial conditions $\eqref{NSK Elliptic initial condition}$ a.e. in $\TT$, and
    \begin{align}\label{local-in-time-existence elliptic equation}
        -\kappa \partial_{xx} c(t) + \coup c(t) = \coup \rho(t) \quad \text{a.e. on } \TT
    \end{align}
    for any $t \in [0,T_0)$.
\end{proposition}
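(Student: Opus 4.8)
The plan is to prove Proposition~\ref{prop:local-in-time} by a linearization-and-fixed-point scheme of the type used in \cite{Mellet}, treating the elliptic equation for $c$ as a smoothing lower-order perturbation. Starting from the reformulation $\eqref{rewritten momentum}$ and using the continuity equation, I would work with the non-conservative momentum equation
\begin{align*}
  \rho\,\partial_t u + \rho u\,\partial_x u - \mu\,\partial_{xx} u = -\partial_x\Parti(\rho) + \coup\,\rho\,\partial_x c,
\end{align*}
with the artificial pressure $\Parti$ from $\eqref{artificial pressure}$ (which is $C^1$ since $P\in C^1$). The iteration: given $u^{(k)}\in C([0,T];H^1(\TT))\cap L^2(0,T;H^2(\TT))$, first solve the linear transport equation $\partial_t\rho^{(k+1)}+u^{(k)}\partial_x\rho^{(k+1)}=-\rho^{(k+1)}\partial_x u^{(k)}$ with datum $\rho_0$; since $u^{(k)}\in L^2(0,T;W^{1,\infty}(\TT))$ this yields $\rho^{(k+1)}\in C([0,T];H^1(\TT))$ with $\partial_t\rho^{(k+1)}\in C([0,T];L^2(\TT))$, and the representation along characteristics gives $M_0^{-1}\exp(-\int_0^t\|\partial_x u^{(k)}\|_{L^\infty})\le\rho^{(k+1)}(t,\cdot)\le M_0\exp(\int_0^t\|\partial_x u^{(k)}\|_{L^\infty})$. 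Next, solve $-\kappa\partial_{xx}c^{(k+1)}+\coup c^{(k+1)}=\coup\rho^{(k+1)}$, so that elliptic regularity on $\TT$ gives $c^{(k+1)}\in C([0,T];H^3(\TT))$ with $\|c^{(k+1)}(t)\|_{H^3}\le C\|\rho^{(k+1)}(t)\|_{H^1}$. Finally solve the linear parabolic problem $\rho^{(k+1)}\partial_t u^{(k+1)}+\rho^{(k+1)}u^{(k)}\partial_x u^{(k+1)}-\mu\partial_{xx}u^{(k+1)}=-\partial_x\Parti(\rho^{(k+1)})+\coup\rho^{(k+1)}\partial_x c^{(k+1)}$ with datum $u_0$; the right-hand side lies in $L^2(0,T;L^2(\TT))$, and a Galerkin argument in the Fourier basis with the usual energy estimates produces $u^{(k+1)}\in C([0,T];H^1(\TT))\cap L^2(0,T;H^2(\TT))$ with $\partial_t u^{(k+1)}\in L^2(0,T;L^2(\TT))$.

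The second step is to close uniform a-priori bounds on a short interval $[0,T_0]$, with $T_0$ depending only on $\|u_0\|_{H^1(\TT)}$, $\|\rho_0\|_{H^1(\TT)}$, $M_0$, $\coup$, $\kappa$, $\mu$. Following the estimates of \cite{Mellet}, I would test the momentum equation with $u^{(k+1)}$ and with $\partial_t u^{(k+1)}$ (equivalently $-\partial_{xx}u^{(k+1)}$) and use the transport equation to bound $\|\rho^{(k+1)}\|_{C([0,T];H^1)}$ and $\|\partial_t\rho^{(k+1)}\|_{C([0,T];L^2)}$. All terms arising from $\Parti(\rho)$ and from $\rho u\partial_x u$ are handled exactly as the pressure and convection terms for the compressible Navier--Stokes equations — no sign of the pressure term is needed here, monotonicity of $\Parti$ becoming relevant only for the later global continuation — while the extra term $\coup\rho^{(k+1)}\partial_x c^{(k+1)}$ is controlled via $\|\partial_x c^{(k+1)}\|_{H^2}\le C\|c^{(k+1)}\|_{H^3}\le C\|\rho^{(k+1)}\|_{H^1}$, so that it is in fact an $H^1$-valued, genuinely lower-order source absorbed at the cost of a factor $C\,T_0$. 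Choosing $T_0$ small then makes a fixed ball in $C([0,T_0];H^1(\TT))\cap L^2(0,T_0;H^2(\TT))$ (with $\partial_t u\in L^2(0,T_0;L^2(\TT))$) invariant under the iteration and simultaneously forces $\int_0^{T_0}\|\partial_x u^{(k)}\|_{L^\infty}\le\log 2$, hence the uniform positivity bounds $(2M_0)^{-1}\le\rho^{(k)}\le 2M_0$ on $[0,T_0]\times\TT$.

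The third step is convergence: I would estimate the differences $\rho^{(k+1)}-\rho^{(k)}$, $c^{(k+1)}-c^{(k)}$, $u^{(k+1)}-u^{(k)}$ in the weaker norms $C([0,T_0];L^2(\TT))$ and $L^2(0,T_0;H^1(\TT))$, where, thanks to the uniform bounds of step two, the iteration map is a contraction for $T_0$ small. The sequence then converges in these norms, and the uniform higher-order bounds together with weak-$\ast$ lower semicontinuity place the limit $(\rho,u,c)$ in the asserted regularity classes; passing to the limit in the equations (and checking, at this regularity, that the non-conservative momentum equation is equivalent to the conservative one) gives a solution of $\eqref{NSK Elliptic 1}$, $\eqref{local-in-time-existence elliptic equation}$ a.e.\ with the prescribed initial data, while positivity of $\rho$ is inherited from the uniform bounds. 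Uniqueness of strong solutions follows from a Gronwall estimate on the difference of two solutions as in \cite{Valli}, which I would simply invoke. \textbf{The main obstacle} lies entirely in step two: closing the coupled hyperbolic ($\rho$ in $H^1$) and parabolic ($u$ in $H^1\cap L^2H^2$) a-priori estimates \emph{uniformly in the iteration index} on a time interval that depends only on the data; the non-monotone $P$ is dealt with once and for all by passing to $\Parti$, and the elliptic term turns out to be the easiest of all, since elliptic regularity gains two derivatives on $c$ relative to $\rho$.
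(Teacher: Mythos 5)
The paper omits the proof of Proposition~\ref{prop:local-in-time} entirely, stating only that it ``can be obtained via the method of successive approximation as in \cite{Solonnikov} or via a fixed point argument as in \cite{Valli}.'' Your sketch is a reasonable elaboration of exactly that successive-approximation/fixed-point strategy (transport for $\rho$, elliptic regularity for $c$, linear parabolic step for $u$, uniform short-time bounds, contraction in weaker norms), so it matches the approach the paper has in mind.
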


The proof of Proposition~\ref{prop:local-in-time} can be obtained via the method of successive approximation as in \cite{Solonnikov} or via a fixed point argument as in \cite{Valli}.
We omit a proof here.

Let us denote by $(\rho,u,c)$ the local-in-time strong solution existing on $[0,T_0)$. 
If we show that
\begin{align}
    &C(T_0)^{-1} \leq \rho(t,x) \leq C(T_0) \quad \forall \, (t,x) \in [0,T_0)\times \TT,\nonumber\\
    &||\rho||_{L^\infty(0,T_0;H^1(\TT))} + ||u||_{L^\infty(0,T_0;H^1(\TT))} \leq C(T_0)\label{bounds:continuation argument}
\end{align}
for some $C(T_0)>0$ only depending on the initial data, a continuation argument yields that the local solution $(\rho,u,c)$ must be global, i.e. $T_0 = \infty$.
The goal for the rest of this section is the verification of the bounds in $\eqref{bounds:continuation argument}$.
Throughout this whole section $C(T_0)>0$ denotes a generic constant that may vary from line to line but only depends on
$
    ||\rho_0||_{H^1(\TT)}, ||u_0||_{H^1(\TT)}, M_0, \mu,\kappa,\coup, T_0.
$
To verify the bounds $\eqref{bounds:continuation argument}$, we exploit the classical energy estimate and an entropy inequality (BD-entropy inequality) that was first derived by the authors in \cite{Bresch_BD_Entropy} for capillary fluids of Korteweg type and then verified by the authors in \cite{Mellet, Mellet_Baro} for the compressible Navier--Stokes system with density dependent viscosity.
Before we come to these inequalities, let us state the following auxiliary lemma that exploits the fact that $c$ satisfies an elliptic equation.

\begin{lemma}\label{lem:elliptic regularity}
    Assume that the hypotheses of Theorem~\ref{Thm:Global Existence and Uniqueness} hold true and let $(\rho,u,c)$ denote the local-in-time strong solution of $\eqref{NSK Elliptic 1}, \eqref{NSK Elliptic initial condition}$ that exists on $[0,T_0)$.\\
    Then there exists a constant $\Cell>0$ that does only depend on $\kappa,\coup$, such that for all $t \in [0,T_0)$, we have the inequality
    \begin{align*}
        ||c(t)||_{H^2(\TT)} \leq \Cell || \rho(t) ||_{L^2(\TT)}.
    \end{align*}
\end{lemma}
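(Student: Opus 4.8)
The plan is to treat $\eqref{local-in-time-existence elliptic equation}$, for each fixed $t \in [0,T_0)$, as a coercive elliptic equation on $\TT$ with right-hand side $\coup\rho(t) \in L^2(\TT)$, and to extract the $H^2$-bound by a short two-step energy argument. Note that Proposition~\ref{prop:local-in-time} already provides $c(t) \in H^3(\TT) \subseteq H^2(\TT)$, so the issue is purely quantitative: one must check that the constant can be chosen depending only on $\kappa$ and $\coup$, and in particular independent of $t$, of $T_0$, and of the solution itself.

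First I would multiply $\eqref{local-in-time-existence elliptic equation}$ by $c(t)$ and integrate over $\TT$. Since $\TT$ has no boundary, the integration by parts produces no boundary terms and gives
\[
    \kappa \TTint |\partial_x c(t)|^2\,\dd x + \coup \TTint |c(t)|^2\,\dd x
    = \coup \TTint \rho(t)\,c(t)\,\dd x
    \le \coup\,||\rho(t)||_{L^2(\TT)}||c(t)||_{L^2(\TT)}.
\]
Discarding the nonnegative gradient term gives $||c(t)||_{L^2(\TT)} \le ||\rho(t)||_{L^2(\TT)}$, and feeding this back in while discarding the $L^2$-term gives $||\partial_x c(t)||_{L^2(\TT)}^2 \le (\coup/\kappa)||\rho(t)||_{L^2(\TT)}^2$. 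In the second step I would read the second derivative straight off the equation, $\kappa\,\partial_{xx}c(t) = \coup c(t) - \coup\rho(t)$, so that $||\partial_{xx}c(t)||_{L^2(\TT)} \le (\coup/\kappa)\bigl(||c(t)||_{L^2(\TT)} + ||\rho(t)||_{L^2(\TT)}\bigr) \le (2\coup/\kappa)||\rho(t)||_{L^2(\TT)}$. Summing the three contributions yields $||c(t)||_{H^2(\TT)}^2 \le \bigl(1 + \coup/\kappa + 4\coup^2/\kappa^2\bigr)||\rho(t)||_{L^2(\TT)}^2$, which is the claim with $\Cell := \bigl(1 + \coup/\kappa + 4\coup^2/\kappa^2\bigr)^{1/2}$, depending only on $\kappa$ and $\coup$.

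I do not expect any genuine obstacle here: this is a standard coercive elliptic estimate, and the only things to watch are the bookkeeping of constants in terms of $\kappa,\coup$ and the justification of the integration by parts, the latter being immediate from $c(t)\in H^3(\TT)$. As an alternative one could bypass the energy computation and use Fourier series on $\TT = \RR/\ZZ$: the solution operator of $\eqref{local-in-time-existence elliptic equation}$ acts on $\ZZ$ as the Fourier multiplier $k \mapsto \coup/(4\pi^2\kappa k^2 + \coup)$, and since $\sup_{k\in\ZZ}(1+k^2)^2\coup^2/(4\pi^2\kappa k^2 + \coup)^2$ is finite and depends only on $\kappa$ and $\coup$, Parseval's identity then delivers the stated bound in one line.
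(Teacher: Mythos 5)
Your proof is correct, and it is the same idea as the paper's proof, just written out: the paper simply cites the standard interior elliptic regularity result (pointing to Evans), whereas you supply the underlying two-step energy argument (test with $c$, then read $\partial_{xx}c$ off the equation) that makes the constant's dependence on $\kappa$ and $\coup$ explicit. The Fourier-multiplier alternative you sketch is also a valid one-line proof on $\TT$.
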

\begin{proof}
    For any $t \in [0,T_0)$ we have that $c(t)$ satisfies the elliptic equation $\eqref{local-in-time-existence elliptic equation}$.
    Thus, Lemma~\ref{lem:elliptic regularity} follows from standard interior regularity estimates, see e.g. \cite{evans2010partial}.
\end{proof}

Now we come to the classical energy estimate.

\begin{proposition}[Energy dissipation]\label{Prop:Energy Dissipation}
    Assume that the hypotheses of Theorem~\ref{Thm:Global Existence and Uniqueness} hold true and let $(\rho,u,c)$ denote the local-in-time strong solution of $\eqref{NSK Elliptic 1}, \eqref{NSK Elliptic initial condition}$ that exists on $[0,T_0)$.\\
    Then we have for all $t \in [0,T_0)$ the inequality
    \begin{align*}
        &\TTint 
        \frac{1}{2} \rho u^2 
        +
        W(\rho)
        +
        \frac{\coup}{2} |\rho - c|^2 
        +
        \frac{\kappa}{2} |\partial_x c|^2 \,\dd x
        +
        \int_0^t \TTint \mu |\partial_x u|^2 \, \dd x \, \dd \tau
        \leq
        E_0,
    \end{align*}
    where
    \begin{align*}
        E_0 := \frac{1}{2}M_0||u_0||_{L^2(\TT)}^2 +  \max\limits_{\lambda\in[M_0^{-1},M_0]}\bigl\{|W(\lambda)|\bigr\}
        +  \gamma M_0^2 + \gamma \Cell^2M_0^2 + \frac{\kappa}{2} \Cell^2 M_0^2.
    \end{align*}
\end{proposition}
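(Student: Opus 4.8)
The plan is to derive the dissipation identity in differential form: I would test the momentum equation $\eqref{NSK Elliptic 1}_2$ with $u$, integrate over $\TT$, and then integrate in time. Before doing so I would record that, by Proposition~\ref{prop:local-in-time}, the strong solution satisfies $u\in L^2(0,T_0;H^2(\TT))$, $\partial_t u\in L^2(0,T_0;L^2(\TT))$, $\partial_t\rho\in C([0,T_0);L^2(\TT))$, and on every compact subinterval of $[0,T_0)$ the density is bounded away from $0$ and from above (so $W(\rho)$ is integrable on $\TT$); differentiating $\eqref{local-in-time-existence elliptic equation}$ in time then gives $\partial_t c\in C([0,T_0);H^2(\TT))$. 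With this regularity all integrations by parts below are legitimate and the resulting identity holds in the integrated (a.e.\ $t$) sense.

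For the convective contribution I would use $u\,\partial_t(\rho u)+u\,\partial_x(\rho u^2)=\partial_t(\tfrac12\rho u^2)+\partial_x(\tfrac12\rho u^3)+\tfrac12 u^2\big(\partial_t\rho+\partial_x(\rho u)\big)$, so that $\eqref{NSK Elliptic 1}_1$ annihilates the last bracket and integration over $\TT$ annihilates the spatial divergence, leaving $\tfrac{\dd}{\dd t}\TTint\tfrac12\rho u^2\,\dd x$. For the pressure term, $\eqref{relation pressure potential}$ (i.e.\ $P'(r)=rW''(r)$) together with $\eqref{NSK Elliptic 1}_1$ yields $\tfrac{\dd}{\dd t}\TTint W(\rho)\,\dd x=-\TTint W'(\rho)\,\partial_x(\rho u)\,\dd x=\TTint W''(\rho)\rho u\,\partial_x\rho\,\dd x=\TTint u\,\partial_x P(\rho)\,\dd x$, so the pressure term equals $\tfrac{\dd}{\dd t}\TTint W(\rho)\,\dd x$; the viscous term gives $\mu\TTint|\partial_x u|^2\,\dd x$. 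The main point is the coupling term $-\gamma\TTint\rho u\,\partial_x(c-\rho)\,\dd x$: setting $\phi:=c-\rho$ and using $\rho u\,\partial_x\phi=\partial_x(\rho u\phi)-\phi\,\partial_x(\rho u)$ together with $\eqref{NSK Elliptic 1}_1$, this equals $-\gamma\TTint\phi\,\partial_t\rho\,\dd x$; then, since $\partial_t\rho=\partial_t c-\partial_t\phi$, the $\partial_t\phi$ part produces $\tfrac{\dd}{\dd t}\TTint\tfrac{\gamma}{2}|\rho-c|^2\,\dd x$, and for the $\partial_t c$ part the elliptic equation in the form $\gamma(c-\rho)=\kappa\partial_{xx}c$ gives $-\gamma\TTint\phi\,\partial_t c\,\dd x=-\kappa\TTint\partial_{xx}c\,\partial_t c\,\dd x=\kappa\TTint\partial_x c\,\partial_x\partial_t c\,\dd x=\tfrac{\dd}{\dd t}\TTint\tfrac{\kappa}{2}|\partial_x c|^2\,\dd x$. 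Collecting the four contributions,
\[
\frac{\dd}{\dd t}\TTint\Big(\tfrac12\rho u^2+W(\rho)+\tfrac{\gamma}{2}|\rho-c|^2+\tfrac{\kappa}{2}|\partial_x c|^2\Big)\dd x+\mu\TTint|\partial_x u|^2\,\dd x=0.
\]

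Integrating this from $0$ to $t$ reduces the claim to bounding the energy at $t=0$ by $E_0$. Using $M_0^{-1}\le\rho_0\le M_0$ and $|\TT|=1$ I would estimate $\tfrac12\TTint\rho_0 u_0^2\,\dd x\le\tfrac12 M_0\|u_0\|_{L^2(\TT)}^2$, $\TTint W(\rho_0)\,\dd x\le\max_{\lambda\in[M_0^{-1},M_0]}|W(\lambda)|$, and, writing $c_0:=c(0,\cdot)$, $\tfrac{\gamma}{2}|\rho_0-c_0|^2\le\gamma(|\rho_0|^2+|c_0|^2)$ together with $\tfrac{\kappa}{2}\|\partial_x c_0\|_{L^2(\TT)}^2\le\tfrac{\kappa}{2}\|c_0\|_{H^2(\TT)}^2$; applying Lemma~\ref{lem:elliptic regularity} at $t=0$, $\|c_0\|_{H^2(\TT)}\le\Cell\|\rho_0\|_{L^2(\TT)}\le\Cell M_0$, and summing produces exactly $E_0$. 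I expect the only real obstacle to be the careful bookkeeping in the coupling term (and the accompanying check that the formal testing and the time differentiation of the elliptic relation are justified by the regularity of the strong solution and elliptic regularity for $c$); everything else is the classical energy computation.
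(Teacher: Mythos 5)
Your proposal is correct and follows essentially the same route as the paper: test the momentum equation with $u$, use the continuity equation to turn the convective and pressure terms into time derivatives of $\tfrac12\rho u^2$ and $W(\rho)$, and exploit the elliptic relation $\gamma(c-\rho)=\kappa\partial_{xx}c$ to rewrite the coupling contribution as $\tfrac{\dd}{\dd t}\TTint\bigl(\tfrac{\gamma}{2}|\rho-c|^2+\tfrac{\kappa}{2}|\partial_x c|^2\bigr)\dd x$, then bound the initial energy by $E_0$ via the pointwise density bounds and Lemma~\ref{lem:elliptic regularity}. The only cosmetic difference is that you start from the conservative form of the momentum equation and rearrange the convective terms algebraically, whereas the paper first passes to the nonconservative form $\rho(\partial_t u+u\partial_x u)$; these are equivalent and lead to the same identity.
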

\begin{proof}
    Since we are in the strong solution framework, the following calculations are justified. First, using the continuity equation $\eqref{NSK Elliptic 1}_1$, we may rewrite the momentum equation $\eqref{NSK Elliptic 1}_1$ as
    \begin{align*}
        \rho (\partial_t u + u\partial_x u)
        +
        P^\prime(\rho) \partial_x \rho
        -
        \mu \partial_{xx} u
        +
        \coup \rho \partial_x(\rho - c)
        = 0.
    \end{align*}
    Multiplying this equation by $u$, and integrating over the spatial domain $\TT$ yields then for almost all $t\in[0,T_0)$ the relation
    \begin{align}\label{Prop:Energy Dissipation Proof 1}
        \TTint \rho(\partial_t u + u \partial_x u)u \, \dd x 
        +
        \TTint P^\prime(\rho) \partial_x \rho u \,\dd x
        -
        \mu \TTint \partial_{xx}u u \,\dd x 
        + 
        \coup \TTint \rho \partial_x(\rho-c) u \,\dd x 
        =
        0.
    \end{align}
    Now we analyze each term in equation $\eqref{Prop:Energy Dissipation Proof 1}$ separately. For the first term, we obtain by virtue of the continuity equation $\eqref{NSK Elliptic 1}_1$ for almost all $t \in [0,\infty)$
    \begin{align}\label{Prop:Energy Dissipation Proof 2}
        \TTint \rho(\partial_t u + u \partial_x u)u \, \dd x 
        =
        \frac{\dd}{\dd t} \TTint \frac{1}{2} \rho |u|^2\,  \dd x.
    \end{align}
    For the second term, we obtain again by virtue of the continuity equation $\eqref{NSK Elliptic 1}_1$ and relation $\eqref{relation pressure potential}$ using integration by parts
    \begin{align}
        \TTint P^\prime(\rho) \partial_x \rho u \, \dd x
        &=
        \TTint W^{\prime\prime}(\rho) (\partial_x \rho) \rho u \, \dd x
        =
        \TTint \partial_x(W^\prime(\rho)) \rho u \,\dd x \nonumber\\
        &=
        -\TTint W^\prime(\rho) \partial_x(\rho u) \, \dd x
        =
        \TTint W^\prime(\rho) \partial_t\rho \, \dd x\nonumber\\
        &=
        \frac{\dd}{\dd t} \TTint W(\rho)\,  \dd x.\label{Prop:Energy Dissipation Proof 3}
    \end{align}
    For the third term in equation $\eqref{Prop:Energy Dissipation Proof 1}$, integration by parts yields
    \begin{align}\label{Prop:Energy Dissipation Proof 4}
        -\mu \TTint u\partial_{xx}u \,  \dd x 
        =
        \mu \TTint |\partial_x u|^2 \, \dd x.
    \end{align}
    For the last term, we use again the continuity equation $\eqref{NSK Elliptic 1}_1$ and integration by parts to obtain
    \begin{align*}
        \TTint \coup \partial_x(\rho - c) \rho u \, \dd x
        =
        \TTint \partial_t \rho \coup (\rho - c) \, \dd x.
    \end{align*}
    Multiplying the elliptic equation $\eqref{NSK Elliptic 1}_3$ by $\partial_t c$ and integrating over the spatial domain $\TT$ yields for almost all $t \in [0,T_0)$
    \begin{align*}
        0
        =
        \TTint \kappa \partial_t c \partial_{xx}c \, \dd x
        +
        \TTint \coup \partial_t c (\rho - c) \, \dd x
        =
        -
        \frac{\dd}{\dd t} \TTint \frac{\kappa}{2} |\partial_x c|^2 \, \dd x
        +
        \TTint \coup \partial_t c (\rho - c) \, \dd x.
    \end{align*}
    Thus,
    \begin{align}\label{Prop:Energy Dissipation Proof 7}
        \TTint \coup \partial_x (\rho - c) \rho u \, \dd x
        &= \frac{\dd}{\dd t} \TTint \frac{\coup}{2} |\rho-c|^2 \dd x + \TTint \coup \partial_t c (\rho - c)\,  \dd x\nonumber\\
        &= \frac{\dd}{\dd t} \TTint \frac{\coup}{2} |\rho - c|^2  + \TTint \frac{\kappa}{2} |\partial_x c|^2\,  \dd x.
    \end{align}
    Together, equations $\eqref{Prop:Energy Dissipation Proof 1}$, $\eqref{Prop:Energy Dissipation Proof 2}$, $\eqref{Prop:Energy Dissipation Proof 3}$, $\eqref{Prop:Energy Dissipation Proof 4}$ and $\eqref{Prop:Energy Dissipation Proof 7}$ yield for almost all $t \in [0,T_0)$
    \begin{align*}
        \frac{\dd}{\dd t} \TTint \rho |u|^2 + W(\rho) + \frac{\coup}{2} |\rho-c|^2 + \frac{\kappa}{2} |\partial_x c|^2 \, \dd x 
        +
        \TTint \mu |\partial_x u|^2 \, \dd x
        =
        0.
    \end{align*}
    After integration in time we obtain for any $t \in [0,T_0)$
    \begin{align*}
        &\TTint \rho |u|^2 + W(\rho) + \frac{\coup}{2}|\rho-c|^2 + \frac{\kappa}{2} |\partial_x c|^2 \, \dd x +
        \int_0^t \TTint \mu |\partial_x u|\, \dd x \, \dd \tau\\
        &\leq
        \TTint \rho_0(x) |u_0(x)|^2 + W(\rho_0(x)) + \frac{\coup}{2}|\rho_0(x) - c(0,x)|^2 + \frac{\kappa}{2} |\partial_x c(0,x)|^2 \, \dd x\\
        &\leq E_0,
    \end{align*}
    where in the last inequality we have used Hölder's and Young's inequality and Lemma~\ref{lem:elliptic regularity} for $t=0$.
\end{proof}

Combining Proposition~\ref{Prop:Energy Dissipation} and relation $\eqref{bound for rbeta}$, we obtain some control on the density.

\begin{proposition}\label{prop:square control on density}
    Assume that the hypotheses of Theorem~\ref{Thm:Global Existence and Uniqueness} hold true and let $(\rho,u,c)$ denote the local-in-time strong solution of $\eqref{NSK Elliptic 1}, \eqref{NSK Elliptic initial condition}$ that exists on $[0,T_0)$.\\ 
    Then there exists some constant $E_1\geq0$ that only depends on $M_0,||u_0||_{L^2(\TT)},\mu,\coup,\kappa$, such that for any $t \in [0,T_0)$, we have
    \begin{align*}
        ||\rho(t)||_{L^2(\TT)}  + ||c(t)||_{W^{1,\infty}(\TT)} + ||c(t)||_{H^2(\TT)}\leq E_1.
    \end{align*}
\end{proposition}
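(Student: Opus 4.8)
The plan is to combine the energy estimate of Proposition~\ref{Prop:Energy Dissipation} with the coercivity inequality $\eqref{bound for rbeta}$ and the elliptic regularity estimate of Lemma~\ref{lem:elliptic regularity}. The crucial observation is that every term in the energy functional of Proposition~\ref{Prop:Energy Dissipation} is nonnegative: indeed $W \geq 0$ by construction, and the dissipation integral $\int_0^t \TTint \mu |\partial_x u|^2 \, \dd x \, \dd \tau$ is obviously nonnegative. Hence each individual term is bounded by $E_0$, which, by its explicit form, depends only on $M_0$, $||u_0||_{L^2(\TT)}$, $\gamma$, $\kappa$ (and the fixed pressure data), and in particular \emph{not} on $T_0$ nor on the particular local solution.

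First I would discard all terms but the pressure-potential one to obtain $\TTint W(\rho(t)) \, \dd x \leq E_0$ for every $t \in [0,T_0)$. Applying $\eqref{bound for rbeta}$ pointwise in $x$, integrating over $\TT$ and using $|\TT| = 1$ then yields
\begin{align*}
    ||\rho(t)||_{L^2(\TT)}^2 &= \TTint \rho(t,x)^2 \, \dd x
    \leq C_\rho + C_\rho \TTint W(\rho(t,x)) \, \dd x \\
    &\leq C_\rho(1 + E_0),
\end{align*}
so that $||\rho(t)||_{L^2(\TT)}$ is bounded for all $t \in [0,T_0)$ by a constant of the asserted dependence.

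Then I would invoke Lemma~\ref{lem:elliptic regularity} to get $||c(t)||_{H^2(\TT)} \leq \Cell ||\rho(t)||_{L^2(\TT)}$, and the one-dimensional Sobolev embedding $H^2(\TT) \hookrightarrow W^{1,\infty}(\TT)$ to get $||c(t)||_{W^{1,\infty}(\TT)} \leq \Csob ||c(t)||_{H^2(\TT)}$; since $\Cell$ depends only on $\kappa,\coup$ and $\Csob$ is the embedding constant, adding the three bounds produces the claimed $E_1$. I do not anticipate any genuine difficulty; the only point deserving attention is that $E_0$ — and hence $E_1$ — is independent of $T_0$, which is exactly what makes this estimate usable in the continuation argument behind $\eqref{bounds:continuation argument}$.
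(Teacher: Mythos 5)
Your argument is correct and coincides with the paper's: both discard the other nonnegative terms in the energy balance of Proposition~\ref{Prop:Energy Dissipation} to isolate $\TTint W(\rho(t))\,\dd x \leq E_0$, then apply $\eqref{bound for rbeta}$ for the $L^2$-bound on $\rho$, Lemma~\ref{lem:elliptic regularity} for the $H^2$-bound on $c$, and a one-dimensional Sobolev embedding for the $W^{1,\infty}$-bound on $c$. Your phrasing via $H^2(\TT)\hookrightarrow W^{1,\infty}(\TT)$ is the same embedding the paper invokes as $W^{1,2}(\TT)\hookrightarrow L^\infty(\TT)$ applied to $c$ and $\partial_x c$.
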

\begin{proof}
    With relation $\eqref{bound for rbeta}$ and Proposition~\ref{Prop:Energy Dissipation} we have
    \begin{align*}
        \TTint \rho^2 \, \dd x\leq C_\rho + C_\rho \TTint W(\rho) \, \dd x \leq C_\rho + C_\rho E_0.
    \end{align*}
    Using the Sobolev embedding $W^{1,2}(\TT) \hookrightarrow L^\infty(\TT)$ and the elliptic estimate provided by Lemma~\ref{lem:elliptic regularity}, we obtain
    \begin{align*}
        ||c(t)||_{W^{1,\infty}(\TT)} 
        \leq C ||c(t)||_{H^2(\TT)}
        \leq C \Cell  ||\rho(t)||_{L^2(\TT)}
        \leq C\Cell \sqrt{C_\rho + C_\rho E_0},
    \end{align*}
    where $C$ denotes the embedding constant of $W^{1,2}(\TT) \hookrightarrow L^\infty(\TT)$. 
    Setting
    \begin{align*}
        E_1 := 3\max\biggl\{\sqrt{ C_\rho + C_\rho E_0 }, C \Cell\sqrt{C_\rho + C_\rho E_0}, \Cell \sqrt{C_\rho + C_\rho E_0} \biggr\}
    \end{align*}
    yields the claim.
\end{proof}

As a next step, we derive the BD-entropy inequality. From this inequality we gain some control on the derivative of the density. Compared to the situation for the compressible Navier--Stokes equations in \cite{Mellet}, we have to treat an additional term involving the order parameter in the momentum equation.
Fortunately, the a-priori estimates for $c$ derived in Proposition~\ref{prop:square control on density} are strong enough to control this additional term. We emphasize that the monotonicity of $\Parti$ is crucial in order to use the following result.

\begin{proposition}\label{prop:BD}
    Assume that the hypotheses of Theorem~\ref{Thm:Global Existence and Uniqueness} hold true and let $(\rho,u,c)$ denote the local-in-time strong solution of $\eqref{NSK Elliptic 1}, \eqref{NSK Elliptic initial condition}$ that exists on $[0,T_0)$.\\
    Then we have for all $t \in [0,T_0)$ the inequality
    \begin{align}\label{prop:BD:eq1}
        &\TTint \frac{1}{2} \rho |u + \partial_x\varphi(\rho)|^2 + W(\rho) + \frac{\coup}{2} |\rho - c|^2 + \frac{\kappa}{2} |\partial_x c|^2  \, \dd x\nonumber\\
        &\qquad+
        \int_0^t \TTint \partial_x(\varphi(\rho)) \partial_x \Parti(\rho) \, \dd x \, \dd t  
        \leq 
        C(T_0),
    \end{align}
    where $\varphi(r):= \int_1^r \frac{\mu}{s^2} \, \dd s$,
    and in particular
    \begin{align}\label{prop:BD:eq2}
        \biggl|\biggl|\partial_x\biggl(\frac{1}{ \sqrt{\rho}}\biggr)\biggr|\biggr|_{L^\infty(0,T_0;L^2(\TT))} \leq C(T_0).
    \end{align}
\end{proposition}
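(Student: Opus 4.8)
The plan is to run the classical Bresch--Desjardins computation, treating the new order-parameter contribution $\coup\rho\partial_x c$ in the momentum equation as a perturbation that is absorbed using the elliptic bounds of Proposition~\ref{prop:square control on density}. As in the proof of Proposition~\ref{Prop:Energy Dissipation}, all the manipulations below are legitimate in the strong-solution class (if desired, after a routine regularization). First I would use the continuity equation $\eqref{NSK Elliptic 1}_1$ together with the definition $\eqref{artificial pressure}$ of $\Parti$ to put the momentum equation into the non-conservative form
\[
    \rho\,(\partial_t u + u\partial_x u) = -\partial_x\Parti(\rho) + \mu\partial_{xx}u + \coup\rho\partial_x c .
\]
Setting $v := \partial_x\varphi(\rho) = \tfrac{\mu}{\rho^2}\partial_x\rho$ and $w := u + v$, a direct computation relying solely on $\eqref{NSK Elliptic 1}_1$ gives the identity $\rho\,(\partial_t v + u\partial_x v) = -\mu\partial_{xx}u$ (as in \cite{Bresch_BD_Entropy, Mellet}); this is exactly where the choice $\varphi'(r)=\mu/r^2$ is used. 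Adding the two relations, the viscous term cancels and one is left with
\[
    \rho\,(\partial_t w + u\partial_x w) = -\partial_x\Parti(\rho) + \coup\rho\partial_x c .
\]

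Next I would test the last display against $w$, using $\eqref{NSK Elliptic 1}_1$ in the form $\frac{\dd}{\dd t}\TTint \tfrac12\rho|w|^2\,\dd x = \TTint \rho\,w\,(\partial_t w + u\partial_x w)\,\dd x$, and split $w=u+v$. The part tested against $u$ can be treated by exactly the chain of manipulations carried out in Proposition~\ref{Prop:Energy Dissipation} --- integration by parts, $\eqref{NSK Elliptic 1}_1$, the relation $\eqref{relation pressure potential}$, and $\eqref{NSK Elliptic 1}_3$ tested against $\partial_t c$ --- and yields
\[
    \TTint u\bigl(-\partial_x\Parti(\rho) + \coup\rho\partial_x c\bigr)\dd x
    = -\frac{\dd}{\dd t}\TTint \Bigl( W(\rho) + \tfrac{\coup}{2}|\rho-c|^2 + \tfrac{\kappa}{2}|\partial_x c|^2\Bigr)\dd x .
\]
The part tested against $v$ is simply $-\TTint \partial_x\varphi(\rho)\,\partial_x\Parti(\rho)\,\dd x + \coup\TTint v\,\rho\,\partial_x c\,\dd x$, where the first integral is nonnegative because $\varphi'>0$ and $\Parti'\ge 0$ by admissibility (this is the point where monotonicity of $\Parti$ enters). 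Collecting the terms gives the differential identity
\[
    \frac{\dd}{\dd t}\TTint\Bigl( \tfrac12\rho|w|^2 + W(\rho) + \tfrac{\coup}{2}|\rho-c|^2 + \tfrac{\kappa}{2}|\partial_x c|^2 \Bigr)\dd x
    + \TTint \partial_x\varphi(\rho)\,\partial_x\Parti(\rho)\,\dd x = \coup\TTint v\,\rho\,\partial_x c\,\dd x .
\]

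To close the estimate I would bound the remainder by Cauchy--Schwarz,
\[
    \coup\TTint v\,\rho\,\partial_x c\,\dd x \le \coup\,\|\partial_x c\|_{L^\infty(\TT)}\,\|\sqrt\rho\|_{L^2(\TT)}\,\|\sqrt\rho\,v\|_{L^2(\TT)},
\]
where $\|\partial_x c(t)\|_{L^\infty(\TT)}\le E_1$ by Proposition~\ref{prop:square control on density}; $\|\sqrt\rho(t)\|_{L^2(\TT)}^2 = \TTint\rho(t)\,\dd x = \TTint\rho_0\,\dd x \le M_0$, since integrating $\eqref{NSK Elliptic 1}_1$ over $\TT$ shows the total mass is conserved; and
\[
    \|\sqrt\rho\,v\|_{L^2(\TT)}^2 = \TTint \rho|v|^2\,\dd x \le 2\TTint \rho|w|^2\,\dd x + 2\TTint \rho u^2\,\dd x \le 4\TTint\tfrac12\rho|w|^2\,\dd x + 4E_0
\]
by the energy estimate of Proposition~\ref{Prop:Energy Dissipation}. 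Writing $Y(t):=\TTint\tfrac12\rho(t)|w(t)|^2\,\dd x$, integrating the identity above in time, dropping on the left the nonnegative dissipation and lower-order terms, and bounding the data terms at $t=0$ (using Lemma~\ref{lem:elliptic regularity} for the $c$-terms, exactly as in Proposition~\ref{Prop:Energy Dissipation}), one arrives at an inequality of the form
\[
    \sup_{[0,t]} Y \le C(T_0) + C(T_0)\int_0^t \sqrt{\,\sup_{[0,\tau]} Y\,}\,\dd\tau ,
\]
and a Gronwall argument applied to $\sqrt{\sup_{[0,\cdot]}Y}$ yields $\sup_{[0,T_0)} Y \le C(T_0)$. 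Substituting this back into the remainder bound and into the time-integrated identity gives $\eqref{prop:BD:eq1}$; and $\eqref{prop:BD:eq2}$ is then immediate because $4\mu^2\bigl\|\partial_x(1/\sqrt\rho)\bigr\|_{L^2(\TT)}^2 = \TTint\rho|v|^2\,\dd x \le 4Y(t) + 4E_0 \le C(T_0)$ uniformly in $t\in[0,T_0)$.

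The only genuinely new point compared with \cite{Mellet} --- and the step I expect to be the main obstacle --- is the remainder $\coup\TTint v\,\rho\,\partial_x c\,\dd x$: unlike in the Navier--Stokes case the Bresch--Desjardins identity is no longer closed, and absorbing this term requires the $L^\infty$-control of $\partial_x c$ coming from the elliptic regularity estimate (Lemma~\ref{lem:elliptic regularity}, through Proposition~\ref{prop:square control on density}). A minor technical wrinkle is that the resulting estimate is quadratic rather than linear, so it is a priori only local in time and genuinely uses $T_0<\infty$; this is harmless for the continuation argument of Section~\ref{Construction of Global Strong Solutions}.
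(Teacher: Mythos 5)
Your proof is correct and follows the same strategy as the paper's: derive the Bresch--Desjardins identity for $w = u + \partial_x\varphi(\rho)$, absorb the new order-parameter remainder $\coup\TTint \rho\,\partial_x c\,\partial_x\varphi(\rho)\,\dd x$ using the $L^\infty$-control on $\partial_x c$ provided by Proposition~\ref{prop:square control on density}, and close by Gronwall. The only cosmetic differences are that you test the transport equation for $w$ directly rather than computing $\frac{\dd}{\dd t}\TTint\rho u\,\partial_x\varphi(\rho)\,\dd x$ and $\frac{\dd}{\dd t}\TTint\frac12\rho|\partial_x\varphi(\rho)|^2\,\dd x$ separately and adding, and that you bound the remainder by Cauchy--Schwarz (leading to a square-root Gronwall inequality in $Y$) where the paper applies Young's inequality to $\rho\,|\partial_x\varphi(\rho)|$ and closes with the linear Gronwall bound $\frac{\dd}{\dd t}\eta(t)\le C(T_0)\bigl(1+\eta(t)\bigr)$.
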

\begin{proof}
    For the proof, we follow the lines in \cite{Mellet}.
    In the proof of Proposition~\ref{Prop:Energy Dissipation}, we have already verified the relation
    \begin{align}\label{prop:BD:proof:eq1}
        \frac{\mathrm{d}}{\mathrm{d}t}
        \TTint \frac{1}{2} \rho |u|^2 + W(\rho) + \frac{\coup}{2} |\rho-c|^2 + \frac{\kappa}{2} |\partial_x c|^2 \, \dd x
        =
        -\mu \TTint |\partial_xu|^2 \, \dd x.
    \end{align}
    We infer with a mollification argument for the density and integration by parts the relation 
    \begin{align*}
        \frac{\mathrm{d}}{\mathrm{d}t} \TTint \rho u \partial_x \varphi(\rho) \, \dd x
        =
        \TTint \partial_t(\rho u) \partial_x \varphi(\rho) \, \dd x
        -
        \TTint \partial_x(\rho u) \varphi^\prime(\rho) \partial_t \rho \, \dd x.
    \end{align*}
    On the one hand, the momentum equation $\eqref{NSK Elliptic 1}_2$ yields then
    \begin{align*}
        \TTint \partial_t(\rho u ) \partial_x \varphi(\rho) \, \dd x
        =
        &\TTint\mu \partial_x\varphi(\rho) \partial_{xx}u 
        +  \coup \rho \partial_x c \partial_x \varphi(\rho)
        - \partial_x(\rho u^2) \partial_x\varphi(\rho)\\
        &\qquad-  \partial_x \Parti(\rho) \partial_x \varphi(\rho) \, \dd x
    \end{align*}
    and on the other hand the continuity equation $\eqref{NSK Elliptic 1}_1$ yields
    \begin{align*}
        -\TTint \partial_x(\rho u) \varphi^\prime(\rho) \partial_t \rho  \, \dd x
        =
        \TTint |\partial_x(\rho u)|^2 \varphi^\prime(\rho) \, \dd x,
    \end{align*}
    so that together we have
    \begin{align}
        \frac{\mathrm{d}}{\mathrm{d}t} \TTint \rho u \partial_x\varphi(\rho)\, \dd x
        &=
        \TTint \mu \partial_x\varphi(\rho) \partial_{xx} u
        +  \coup \rho \partial_x c \partial_x \varphi(\rho)
        +  |\partial_x(\rho u)|^2 \varphi^\prime (\rho)\nonumber\\
        &\qquad-  \partial_x(\rho u^2) \partial_x\varphi(\rho)
        -  \partial_x \Parti(\rho) \partial_x \varphi(\rho)\, \dd x.\label{prop:BD:proof:eq2}
    \end{align}
    The regularity of the strong solution $(\rho,u,c)$ allows to regularize the continuity equation $\eqref{NSK Elliptic 1}_1$ so that we may infer
    \begin{align}
        \frac{\mathrm{d}}{\mathrm{d}t} \TTint \frac{1}{2} \rho |\partial_x\varphi(\rho)|^2 \, \dd x
        =
        - \TTint \mu \partial_{xx} u \partial_x\varphi(\rho) \, \dd x.\label{prop:BD:proof:eq3}
    \end{align}
    For the technical details and the regularization argument we refer to \cite{Mellet}, where this relation was proven in a more complicated situation, where the viscosity depends on the density.
    Combining $\eqref{prop:BD:proof:eq2}$ and $\eqref{prop:BD:proof:eq3}$, we obtain
    \begin{align}
        \frac{\mathrm{d}}{\mathrm{d}t}
        &\TTint \rho u \partial_x \varphi(\rho) + \frac{1}{2} \rho |\partial_x \varphi(\rho)|^2 \, \dd x+ \TTint \partial_x\Parti(\rho) \partial_x\varphi(\rho) \, \dd x\nonumber\\
        &= \TTint \coup \rho \partial_x c \partial_x\varphi(\rho) + |\partial_x(\rho u)|^2 \varphi^\prime(\rho) - \partial_x(\rho u^2)\partial_x\varphi(\rho) \, \dd x\nonumber \\
        &= \TTint \coup \rho \partial_xc \partial_x\varphi(\rho) + \varphi^\prime(\rho)\bigl( |\partial_x(\rho u)|^2 - \partial_x(\rho u^2)\partial_x \rho \bigr) \, \dd x\nonumber \\
        &= \TTint \coup \rho \partial_x c \partial_x \varphi(\rho) + \varphi^\prime(\rho) \rho^2 |\partial_xu|^2 \, \dd x \nonumber\\
        &= \TTint \coup \rho \partial_x c \partial_x\varphi(\rho) + \mu|\partial_x u|^2 \, \dd x,\label{prop:BD:proof:eq4}
    \end{align}
    where we have used the relation
    \begin{align*}
        |\partial_x(\rho u)|^2 - \partial_x(\rho u^2)\partial_x \rho = \rho^2 |\partial_xu|^2.
    \end{align*}
    Combining $\eqref{prop:BD:proof:eq1}$ and $\eqref{prop:BD:proof:eq4}$ yields
    \begin{align}
        \frac{\mathrm{d}}{\mathrm{d}t} \eta(t) + \TTint \partial_x \Parti(\rho) \partial_x \varphi(\rho) \, \dd x
        = 
        \TTint \coup \rho \partial_x c \partial_x \varphi(\rho) \, \dd x,\label{prop:BD:proof:eq5}
    \end{align}
    where
    \begin{align*}
        \eta(t) := \TTint \frac{1}{2} \rho |u + \partial_x \varphi(\rho)|^2 + W(\rho) + \frac{\coup}{2} |\rho - c|^2 + \frac{\kappa}{2} |\partial_x c|^2 \, \dd x.
    \end{align*}
    Since $(P,\gamma)$ is admissible, we have that $\Parti$ is monotone and therefore
    \begin{align*}
        \partial_x\Parti(\rho) \partial_x\varphi(\rho) = \Parti^\prime(\rho) \varphi^\prime(\rho) |\partial_x \rho|^2 \geq 0,
    \end{align*}
    so that the second term on the left-hand side in $\eqref{prop:BD:proof:eq5}$ is non-negative.
    With Proposition~\ref{Prop:Energy Dissipation}, Proposition~\ref{prop:square control on density} and the conservation of mass we estimate
    \begin{align*}
        \TTint \coup \rho \partial_x c \partial_x\varphi(\rho) \, \dd x
        &\leq
        \coup ||\partial_xc(t)||_{L^\infty(\TT)}\biggl( \TTint \rho + \TTint \rho|\partial_x\varphi(\rho)|^2 \, \dd x\biggr)\\
        &\leq
        C(T_0)\biggl(1 + \TTint \rho|\partial_x\varphi(\rho)|^2 \, \dd x\biggr)\\
        &\leq 
        C(T_0) \biggl( 1 + \TTint \rho |u + \partial_x\varphi(\rho)|^2\, \dd x \biggr)\\
        &\leq
        C(T_0) \biggl( 1 + \eta(t) \biggr),
    \end{align*}
    so that 
    \begin{align*}
        \frac{\mathrm{d}}{\mathrm{d}t} \eta(t) + \TTint \partial_x \Parti(\rho) \partial_x \varphi(\rho) \, \dd x
        \leq
        C(T_0) \biggl( 1 + \eta(t) \biggr).
    \end{align*}
    An application of Gronwall's inequality yields then the relation $\eqref{prop:BD:eq1}$.
    Relation $\eqref{prop:BD:eq1}$ and Proposition~\ref{Prop:Energy Dissipation} imply then
    \begin{align*}
        \TTint \rho |\partial_x \varphi(\rho)|^2\, \dd x \leq C(T_0).
    \end{align*}
    Since 
    \begin{align*}
        \rho |\partial_x \varphi(\rho)|^2 = 4\mu^2 \biggl|\partial_x\biggl(\frac{1}{\sqrt{\rho}}\biggr) \biggr|^2,
    \end{align*}
    we conclude $\eqref{prop:BD:eq2}$. 
\end{proof}

From the BD-entropy inequality, we deduce $L^\infty$-bounds for the density.

\begin{lemma}\label{lem:Linfty density}
    Assume that the hypotheses of Theorem~\ref{Thm:Global Existence and Uniqueness} hold true and let $(\rho,u,c)$ denote the local-in-time strong solution of $\eqref{NSK Elliptic 1}, \eqref{NSK Elliptic initial condition}$ that exists on $[0,T_0)$.\\
    Then we have 
    \begin{align*}
        C(T_0)^{-1} \leq \rho(t,x) \leq C(T_0) \quad \forall \, (t,x) \in [0,T_0) \times \TT.
    \end{align*}
\end{lemma}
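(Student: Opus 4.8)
The plan is to extract pointwise two-sided bounds for $\rho$ from conservation of mass together with the a-priori estimates already at hand, namely $\|\rho(t)\|_{L^2(\TT)} \leq C(T_0)$ from Proposition~\ref{prop:square control on density} and the BD-estimate $\eqref{prop:BD:eq2}$ from Proposition~\ref{prop:BD}. I will repeatedly use that for each fixed $t$ the function $\rho(t,\cdot)$ is continuous and absolutely continuous on $\TT$ (since $\rho \in C([0,T_0);H^1(\TT))$) and strictly positive (Proposition~\ref{prop:local-in-time}), so that $1/\sqrt{\rho}(t,\cdot) \in H^1(\TT)$. First I would integrate the continuity equation $\eqref{NSK Elliptic 1}_1$ over $\TT$ to obtain conservation of mass, $\TTint \rho(t,x)\,\dd x = \TTint \rho_0(x)\,\dd x =: m_0 \in [M_0^{-1},M_0]$, and then invoke the intermediate value theorem to produce, for each $t$, a point $x_0 = x_0(t) \in \TT$ with $\rho(t,x_0(t)) = m_0$.

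For the lower bound I would write, for arbitrary $x \in \TT$,
\begin{align*}
    \frac{1}{\sqrt{\rho(t,x)}}
    \leq \frac{1}{\sqrt{\rho(t,x_0)}} + \biggl| \int_{x_0}^{x} \partial_x\Bigl(\tfrac{1}{\sqrt{\rho}}\Bigr)(t,y)\,\dd y \biggr|
    \leq \sqrt{M_0} + \Bigl\|\partial_x\bigl(\tfrac{1}{\sqrt{\rho}}\bigr)(t)\Bigr\|_{L^2(\TT)}
    \leq C(T_0),
\end{align*}
using the fundamental theorem of calculus, the Cauchy--Schwarz inequality on $\TT$ and $\eqref{prop:BD:eq2}$; this gives $\rho(t,x) \geq C(T_0)^{-1}$ for all $(t,x) \in [0,T_0)\times\TT$.

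For the upper bound I would run a bootstrap on $N(t) := \|\rho(t)\|_{L^\infty(\TT)}$, which is finite for each $t$ by the embedding $H^1(\TT) \hookrightarrow L^\infty(\TT)$. From $\partial_x\rho = -2\rho^{3/2}\,\partial_x(1/\sqrt{\rho})$, the pointwise bound $\rho^{3/2} \leq N(t)^{1/2}\rho$, Cauchy--Schwarz, Proposition~\ref{prop:square control on density} and $\eqref{prop:BD:eq2}$ I obtain
\begin{align*}
    \|\partial_x\rho(t)\|_{L^1(\TT)}
    = 2\TTint \rho^{3/2}\bigl|\partial_x(\tfrac{1}{\sqrt{\rho}})\bigr|\,\dd x
    \leq 2 N(t)^{1/2}\,\|\rho(t)\|_{L^2(\TT)}\,\Bigl\|\partial_x\bigl(\tfrac{1}{\sqrt{\rho}}\bigr)(t)\Bigr\|_{L^2(\TT)}
    \leq C(T_0)\,N(t)^{1/2},
\end{align*}
and then $\rho(t,x) = m_0 + \int_{x_0}^{x}\partial_x\rho(t,y)\,\dd y \leq M_0 + C(T_0)N(t)^{1/2}$, hence $N(t) \leq M_0 + C(T_0)N(t)^{1/2}$; solving this quadratic inequality in $N(t)^{1/2}$ gives $N(t) \leq C(T_0)$ uniformly in $t \in [0,T_0)$.

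The hard part is the upper bound: unlike the lower bound it is not directly visible in the a-priori estimates and genuinely needs the self-improving inequality above. What makes the scheme close is that in one dimension $W^{1,1}(\TT) \hookrightarrow L^\infty(\TT)$ converts the $L^1$-bound on $\partial_x\rho$ into an $L^\infty$-bound, while the exponent $3/2$ in $\partial_x\rho = -2\rho^{3/2}\partial_x(1/\sqrt{\rho})$ is low enough that exactly one half-power of $N(t)$ is spent, leaving a quadratic rather than superlinear inequality for $N(t)$. (I would also note that the constants in Propositions~\ref{prop:square control on density} and \ref{prop:BD} are uniform in $t \in [0,T_0)$, so the resulting bound on $N(t)$ is too.)
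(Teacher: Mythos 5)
Your proof is correct, and both halves take a route genuinely different from the paper's while relying on the same core ingredients: conservation of mass, the $L^2$-bound on $\rho$ from Proposition~\ref{prop:square control on density}, the BD bound $\eqref{prop:BD:eq2}$, and a one-dimensional Sobolev embedding. For the lower bound you anchor $1/\sqrt{\rho}$ at a point $x_0(t)$ where $\rho(t,x_0)=m_0$ (intermediate value theorem, using $\rho(t,\cdot)\in H^1(\TT)\subseteq C(\TT)$) and integrate $\partial_x(1/\sqrt{\rho})$ via the fundamental theorem of calculus and Cauchy--Schwarz; the paper instead estimates the mean $\EE[1/\sqrt{\rho}]$ by pairing against $\rho$ together with Poincar\'e, and then invokes $H^1(\TT)\hookrightarrow L^\infty(\TT)$. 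These are two realizations of the same basic idea and of comparable length. For the upper bound the paper's route is slicker: it observes that $\partial_x\sqrt{\rho}=-\rho\,\partial_x(1/\sqrt{\rho})$, so that
\begin{align*}
  \|\partial_x\sqrt{\rho}(t)\|_{L^1(\TT)}
  = \TTint \rho\,\bigl|\partial_x\bigl(\tfrac{1}{\sqrt{\rho}}\bigr)\bigr|\,\dd x
  \leq \|\rho(t)\|_{L^2(\TT)}\,\bigl\|\partial_x\bigl(\tfrac{1}{\sqrt{\rho}}\bigr)(t)\bigr\|_{L^2(\TT)}
  \leq C(T_0),
\end{align*}
and $W^{1,1}(\TT)\hookrightarrow L^\infty(\TT)$ then bounds $\sqrt{\rho}$ directly, with no self-referential inequality. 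You instead bound $\rho$ itself in $W^{1,1}(\TT)$; the extra half-power in $\partial_x\rho=-2\rho^{3/2}\partial_x(1/\sqrt{\rho})$ then forces the bootstrap $N(t)\leq M_0 + C(T_0)\,N(t)^{1/2}$. This is valid --- $N(t)<\infty$ for each $t$ by $H^1\hookrightarrow L^\infty$, and the quadratic inequality in $N(t)^{1/2}$ closes --- but passing to $\sqrt{\rho}$ avoids the bootstrap entirely and is the cleaner bookkeeping. Your remark correctly identifies where the ``free'' half-power has to come from; the paper simply never spends it.
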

\begin{proof}
    We first prove the lower bound.
    To do this, we use the Sobolev embedding $H^1(\TT) \hookrightarrow L^\infty(\TT)$. 
    According to Proposition~\ref{prop:BD}, we only have to show that for any $t \in [0,T_0)$
    \begin{align*}
        \biggl|\biggl|\frac{1}{\sqrt{\rho}}(t)\biggr|\biggr|_{L^2(\TT)} \leq C(T_0).
    \end{align*}
    Using the Poincaré inequality and Proposition~\ref{prop:BD}, it suffices to prove
    \begin{align*}
        \EE\biggl[\frac{1}{\sqrt{\rho}}(t)\biggr] \leq C(T_0).
    \end{align*}
    In the following calculations we omit the argument for $t$. 
    Conservation of mass leads to
    \begin{align*}
        M_0^{-1} \leq \TTint \rho = \TTint \rho_0 \leq M_0.
    \end{align*}
    This implies with Hölder's inequality
    \begin{align}\label{lem:Linfty density:proof:eq1}
        \TTint \rho \frac{1}{\sqrt{\rho}} \, \dd x = \TTint \sqrt{\rho} \, \dd x \leq \sqrt{M_0} \leq C(T_0).
    \end{align}
    Together with Proposition~\ref{prop:square control on density}, Poincaré's inequality and Proposition~\ref{prop:BD}, this implies
    \begin{align*}
        \TTint \rho \biggl( \frac{1}{\sqrt{\rho}} - \EE\biggl[\frac{1}{\sqrt{\rho}}\biggr]  \biggr)\, \dd x
        \leq 
        ||\rho(t)||_{L^2(\TT)} \biggl|\biggl|\frac{1}{\sqrt{\rho}} - \EE\biggl[\frac{1}{\sqrt{\rho}}\biggr]\biggr|\biggr|_{L^2(\TT)}
        \leq C(T_0).
    \end{align*}
    Finally, we obtain from the above relations that
    \begin{align*}
        M_0^{-1} \EE\biggl[\frac{1}{\sqrt{\rho}}\biggr]
        \leq
        \TTint \rho \EE\biggl[\frac{1}{\sqrt{\rho}}\biggr] \, \dd x
        =
        \TTint \rho \frac{1}{\sqrt{\rho}} \, \dd x
        -\TTint \rho\biggl( \frac{1}{\sqrt{\rho}} - \EE\biggl[\frac{1}{\sqrt{\rho}}\biggr] \biggr) \, \dd x
        \leq C(T_0),
    \end{align*}
    which implies
    \begin{align*}
        \EE\biggl[\frac{1}{\sqrt{\rho}}\biggr] \leq C(T_0).
    \end{align*}
    The proof for the lower bound is complete.
    For the upper bound, we use the Sobolev embedding $W^{1,1}(\TT) \hookrightarrow L^\infty(\TT)$, so that we only have to prove
    \begin{align*}
        ||\sqrt{\rho}||_{L^1(\TT)} + ||\partial_x \sqrt{\rho}||_{L^1(\TT)} \leq C(T_0).
    \end{align*}
    We have already estimated the first term in $\eqref{lem:Linfty density:proof:eq1}$. For the second term, we use
    Proposition~\ref{prop:square control on density} and Proposition~\ref{prop:BD} to estimate
    \begin{align*}
        2||\partial_x\sqrt{\rho}||_{L^1(\TT)}
        \TTint \rho \biggl|\partial_x\biggl(\frac{1}{\sqrt{\rho}}\biggr)\biggr| \, \dd x
        \leq ||\rho(t)||_{L^2(\TT)} \biggl|\biggl|\partial_x\biggl( \frac{1}{\sqrt{\rho}} \biggr) \biggr|\biggr|_{L^2(\TT)} 
        \leq C(T_0).
    \end{align*}
\end{proof}

With the $L^\infty$-bounds for the density, we come back to the BD-entropy inequality and obtain an uniform-in-time $H^1$-bound for $\rho$.

\begin{lemma}\label{uniform-in-time-H^1}
    Assume that the hypotheses of Theorem~\ref{Thm:Global Existence and Uniqueness} hold true and let $(\rho,u,c)$ denote the local-in-time strong solution of $\eqref{NSK Elliptic 1}, \eqref{NSK Elliptic initial condition}$ that exists on $[0,T_0)$.\\
    Then we have
    \begin{align*}
        ||\rho||_{L^\infty(0,T_0;H^1(\TT))} \leq C(T_0).
    \end{align*}
\end{lemma}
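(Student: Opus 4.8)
The plan is to read off the desired $H^1$-bound from the uniform $L^\infty$-bounds for $\rho$ (Lemma~\ref{lem:Linfty density}) together with the BD-entropy estimate $\eqref{prop:BD:eq2}$ for $\partial_x(1/\sqrt\rho)$. The guiding observation is that, once $\rho$ is pinched between two positive constants, the derivatives $\partial_x\rho$ and $\partial_x(1/\sqrt\rho)$ are comparable pointwise, so an $L^2$-bound for the latter transfers immediately to one for the former.

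More precisely, I would first note that since $(\rho,u,c)$ is a strong solution with $\rho \in C([0,T_0);H^1(\TT))$ and, by Lemma~\ref{lem:Linfty density}, $C(T_0)^{-1} \leq \rho \leq C(T_0)$, the map $r \mapsto 1/\sqrt r$ is smooth on the range of $\rho$; hence $1/\sqrt\rho \in C([0,T_0);H^1(\TT))$ and the chain rule yields
\begin{align*}
    \partial_x \rho = -2\,\rho^{3/2}\,\partial_x\!\left(\frac{1}{\sqrt\rho}\right) \qquad \text{a.e. on } [0,T_0)\times\TT.
\end{align*}
Taking the $L^2(\TT)$-norm in $x$ and using the upper bound for $\rho$ gives, for every $t \in [0,T_0)$,
\begin{align*}
    ||\partial_x\rho(t)||_{L^2(\TT)}
    \leq 2\,||\rho(t)||_{L^\infty(\TT)}^{3/2}\,\left|\left|\partial_x\!\left(\frac{1}{\sqrt\rho}\right)(t)\right|\right|_{L^2(\TT)}
    \leq C(T_0),
\end{align*}
where the last step uses Lemma~\ref{lem:Linfty density} once more together with $\eqref{prop:BD:eq2}$.

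It then only remains to bound $||\rho(t)||_{L^2(\TT)}$, which follows at once either from Proposition~\ref{prop:square control on density} or, more directly, from $||\rho(t)||_{L^2(\TT)} \leq ||\rho(t)||_{L^\infty(\TT)} \leq C(T_0)$ since $\TT$ has unit measure. Adding the two bounds yields $\sup_{t\in[0,T_0)}||\rho(t)||_{H^1(\TT)} \leq C(T_0)$, which is the assertion. I do not expect any genuine difficulty here; the only point that deserves a line of justification is the validity of the chain rule for $1/\sqrt\rho$, which is guaranteed precisely by the strictly positive lower bound on the strong solution from Lemma~\ref{lem:Linfty density}.
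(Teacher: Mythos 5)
Your proof is correct and follows essentially the same approach as the paper: both combine the $L^\infty$-bound on $\rho$ from Lemma~\ref{lem:Linfty density} with the BD-entropy bound $\eqref{prop:BD:eq2}$ via the identity $\partial_x\rho = -2\rho^{3/2}\partial_x(1/\sqrt{\rho})$, and then control $\|\rho(t)\|_{L^2(\TT)}$ (the paper invokes Proposition~\ref{prop:square control on density}, you also note the even simpler route via the $L^\infty$-bound and $|\TT|=1$). The only cosmetic difference is that you explicitly flag the chain-rule justification, which the paper leaves implicit.
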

\begin{proof}
    With Proposition~\ref{prop:square control on density} we only have to show that
    \begin{align*}
        ||\partial_x \rho||_{L^\infty(0,T_0;L^2(\TT))} \leq C(T_0).
    \end{align*}
    This can be done by using Proposition~\ref{prop:BD} and Lemma~\ref{lem:Linfty density}
    \begin{align*}
        \TTint |\partial_x\rho|^2
        =
        \TTint \rho^3 \biggl| \frac{\partial_x \rho}{\rho^{\frac{3}{2}}}\biggr|^2\,\dd x
        \leq C(T_0) \biggl|\biggl|\partial_x\biggl(\frac{1}{\sqrt{\rho}}\biggr)\biggr|\biggr|_{L^2(\TT)}^2 
        \leq C(T_0).
    \end{align*}
\end{proof}

Finally, we prove an uniform-in-time $H^1$-bound for the velocity $u$.

\begin{proposition}\label{prop:uniform-in-time velocity}
    Assume that the hypotheses of Theorem~\ref{Thm:Global Existence and Uniqueness} hold true and let $(\rho,u,c)$ denote the local-in-time strong solution $\eqref{NSK Elliptic 1},\eqref{NSK Elliptic initial condition}$ that exists on $[0,T_0)$.\\
    Then we have
    \begin{align*}
        ||u||_{L^\infty(0,T_0;H^1(\TT))} \leq C(T_0).
    \end{align*}
\end{proposition}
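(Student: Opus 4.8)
The plan is to adapt the argument of \cite{Mellet}: test the momentum equation with $\partial_t u$, produce a Gronwall-type differential inequality for
\[
    g(t) := \TTint |\partial_x u(t,x)|^2 \,\dd x ,
\]
and close it using the a-priori bounds already at hand — the energy dissipation of Proposition~\ref{Prop:Energy Dissipation}, the bounds on $c$ in Proposition~\ref{prop:square control on density}, the two-sided density bound of Lemma~\ref{lem:Linfty density}, and the uniform $H^1$-bound on $\rho$ of Lemma~\ref{uniform-in-time-H^1}. Concretely, I would rewrite $\eqref{NSK Elliptic 1}_2$ with the help of the continuity equation as $\rho(\partial_t u + u\partial_x u) + P'(\rho)\partial_x\rho - \mu\partial_{xx}u - \coup\rho\partial_x(c-\rho) = 0$, multiply by $\partial_t u$, integrate over $\TT$, and use $-\mu\TTint \partial_{xx}u\,\partial_t u\,\dd x = \tfrac{\mu}{2}\tfrac{\dd}{\dd t}g(t)$ (a computation legitimate in the strong-solution class, exactly as in \cite{Mellet}) to arrive at
\begin{align*}
    \frac{\mu}{2}\frac{\dd}{\dd t}g(t) + \TTint \rho\,|\partial_t u|^2\,\dd x
    = -\TTint \rho u\partial_x u\,\partial_t u\,\dd x - \TTint P'(\rho)\partial_x\rho\,\partial_t u\,\dd x + \coup\TTint \rho\,\partial_x(c-\rho)\,\partial_t u\,\dd x .
\end{align*}

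The last two terms on the right are harmless: since $\rho$ is bounded above and below by Lemma~\ref{lem:Linfty density}, $P'$ is bounded on the range of $\rho$, and with $\|\partial_x\rho\|_{L^2(\TT)} \le C(T_0)$ (Lemma~\ref{uniform-in-time-H^1}) and $\|\partial_x c\|_{L^2(\TT)} \le C(T_0)$ (Proposition~\ref{prop:square control on density}) one obtains $\|P'(\rho)\partial_x\rho\|_{L^2(\TT)} + \|\coup\rho\partial_x(c-\rho)\|_{L^2(\TT)} \le C(T_0)$, so that these terms are controlled by $C(T_0)\|\partial_t u\|_{L^2(\TT)} \le C(T_0)\|\sqrt\rho\,\partial_t u\|_{L^2(\TT)} \le \tfrac14\TTint \rho|\partial_t u|^2\,\dd x + C(T_0)$, where the lower bound on $\rho$ is used once more. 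The convective term is the delicate one. Here I would first observe that Proposition~\ref{Prop:Energy Dissipation} together with the lower bound on $\rho$ yields $\|u(t)\|_{L^2(\TT)} \le C(T_0)$ uniformly in $t$, and then use the one-dimensional interpolation $\|u\|_{L^\infty(\TT)}^2 \le C\|u\|_{L^2(\TT)}\|u\|_{H^1(\TT)}$ to estimate
\begin{align*}
    \Bigl|\TTint \rho u\partial_x u\,\partial_t u\,\dd x\Bigr|
    \le \|\sqrt\rho\,\partial_t u\|_{L^2(\TT)}\,\|\rho\|_{L^\infty(\TT)}^{1/2}\,\|u\|_{L^\infty(\TT)}\,\|\partial_x u\|_{L^2(\TT)}
    \le \frac14\TTint \rho|\partial_t u|^2\,\dd x + C(T_0)\bigl(1 + g(t) + g(t)^{3/2}\bigr) .
\end{align*}

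After absorbing the $\TTint \rho|\partial_t u|^2$-contributions into the left-hand side, one is left with $\tfrac{\mu}{2}g'(t) \le C(T_0)\bigl(1 + g(t)\bigr)\bigl(1 + \sqrt{g(t)}\bigr)$. The key observation that makes Gronwall applicable — and the main obstacle of the proof, because a direct estimate of the convective term produces the super-linear term $g^{3/2}$, which on its own would only give a local-in-time bound — is that the surplus factor $1 + \sqrt{g}$ is integrable in time: indeed $\int_0^{T_0} g(s)\,\dd s \le C(T_0)$ by the dissipation term in Proposition~\ref{Prop:Energy Dissipation}, so that $\int_0^{T_0}\sqrt{g(s)}\,\dd s \le \sqrt{T_0}\bigl(\int_0^{T_0} g(s)\,\dd s\bigr)^{1/2} \le C(T_0)$. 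Since $g(0) = \|\partial_x u_0\|_{L^2(\TT)}^2 \le \|u_0\|_{H^1(\TT)}^2$, Gronwall's inequality gives $g(t) \le C(T_0)$ for all $t \in [0,T_0)$; combined with the uniform bound on $\|u(t)\|_{L^2(\TT)}$ this is precisely $\|u\|_{L^\infty(0,T_0;H^1(\TT))} \le C(T_0)$.
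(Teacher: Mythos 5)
Your proposal is correct and closes the estimate, but it takes a genuinely different route from the paper. The paper tests the momentum equation with $\partial_{xx}u$, uses the interpolation $\|\partial_x u\|_{L^\infty(\TT)} \leq C\|\partial_x u\|_{L^2(\TT)}^{1/2}\|\partial_{xx}u\|_{L^2(\TT)}^{1/2}$ to absorb the troublesome derivatives into $\int\mu|\partial_{xx}u|^2$, and ends up with a Gronwall inequality $g(t) \lesssim 1 + \int_0^t (1 + g^2)$ which is closed because $\int_0^{T_0} g\,\dd s \leq E_0/\mu$ is under control by Proposition~\ref{Prop:Energy Dissipation}. You instead test with $\partial_t u$, use $\|u\|_{L^\infty(\TT)}^2 \leq C\|u\|_{L^2(\TT)}\|u\|_{H^1(\TT)}$ on the convective term, and close a Gronwall inequality $g' \lesssim (1+g)(1+\sqrt{g})$ via the integrability of $\sqrt{g}$ — which you correctly derive from the same energy dissipation bound, $\int_0^{T_0}\sqrt{g} \leq \sqrt{T_0}\bigl(\int_0^{T_0}g\bigr)^{1/2}$. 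The two multipliers are the two standard choices for a parabolic a-priori estimate, and both ride on the same observation that energy dissipation makes the surplus factor in the superlinear Gronwall term time-integrable; the paper's version yields $\int_0^{T_0}\|\partial_{xx}u\|_{L^2}^2 \leq C(T_0)$ as a byproduct (matching the $L^2_{\mathrm{loc}}(0,\infty;H^2)$ regularity claimed in Theorem~\ref{Thm:Global Existence and Uniqueness}), whereas yours yields $\int_0^{T_0}\int_{\TT}\rho|\partial_t u|^2 \leq C(T_0)$. Your intermediate estimates (bounding $P'(\rho)\partial_x\rho$ and $\coup\rho\partial_x(c-\rho)$ in $L^2$ via Lemma~\ref{lem:Linfty density}, Lemma~\ref{uniform-in-time-H^1}, and Proposition~\ref{prop:square control on density}) are in order, and the integration-by-parts identity $-\mu\int\partial_{xx}u\,\partial_t u = \tfrac{\mu}{2}\tfrac{\dd}{\dd t}\int|\partial_x u|^2$ is indeed justifiable by the regularization argument as in Mellet--Vasseur, so no gap remains.
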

\begin{proof}
    We have 
    \begin{align*}
        \rho(\partial_t u + u \partial_x u) = \mu\partial_{xx}u - \partial_x\Parti(\rho) + \coup \rho \partial_x c
        \quad \text{a.e. in } (0,T_0) \times \TT.
    \end{align*}
    Both sides are in $L^2(0,T_0;L^2(\TT))$.
    Hence, we may multiply both sides by $\partial_{xx}u \in L^2(0,T_0;L^2(\TT))$ and integrate in space and time to obtain
    \begin{align}\label{prop:uniform-in-time velocity:proof:eq1}
        \int_0^t \TTint \rho(\partial_t u + u \partial_x u)\partial_{xx}u \, \dd x \, \dd \tau
        =
        \int_0^t \TTint \mu |\partial_{xx}u|^2 - \partial_x \Parti(\rho) \partial_{xx}u + \coup \rho \partial_x c \partial_{xx}u \, \dd x \, \dd \tau.
    \end{align}
    After an approximation argument using integration by parts, the left hand side can be rewritten as
    \begin{align*}
        &\int_0^t \TTint \rho(\partial_t u + u \partial_x u)\partial_{xx}u\, \dd x \, \dd \tau\\
        &=
        -\frac{1}{2}\TTint \rho(t,x) |\partial_xu(t,x)|^2 \, \dd x
        + \frac{1}{2} \TTint \rho_0 |\partial_xu_0|^2 \, \dd x\\
        &\qquad- 
        \int_0^t \TTint \partial_x \rho (\partial_t u + u \partial_x u) \partial_x u + \rho |\partial_x u|^2\partial_x u  \, \dd x \, \dd \tau.
    \end{align*}
    Plugging this relation into \eqref{prop:uniform-in-time velocity:proof:eq1} yields 
    \begin{align*}
        &\frac{1}{2} \TTint \rho(t,x) |\partial_x u(t,x)|^2 \, \dd x+ \int_0^t \TTint \mu |\partial_{xx}u|^2 \, \dd x \, \dd \tau\\
        &=
        \frac{1}{2} \TTint \rho_0 |\partial_x u_0|^2  \, \dd x
        +
        \int_0^t \TTint \partial_x \Parti(\rho) \partial_{xx}u 
        - \int_0^t \TTint \partial_x \rho\bigl( \partial_t u + u \partial_x u \bigr)\partial_x u \, \dd x \, \dd \tau\\
        &\qquad- \int_0^t\TTint  \rho |\partial_x u|^2 \partial_x u \, \dd x \, \dd \tau  
        -\int_0^t \TTint \coup \rho \partial_x c \partial_{xx}u \, \dd x \, \dd \tau
    \end{align*}
    We estimate the right hand side with the help of Proposition~\ref{uniform-in-time-H^1}. 
    For $\varepsilon>0$, let us denote by $C(\varepsilon)>0$ a generic positive constant that may vary from line to line but only depends on $\varepsilon$.
    First we obtain with Young's inequality and Proposition~\ref{uniform-in-time-H^1}
    \begin{align*}
        \int_0^t \TTint \partial_x \Parti(\rho) \partial_{xx}u \, \dd x \, \dd \tau
        &\leq
        \frac{1}{4\mu\varepsilon} \int_0^t ||\partial_x \Parti(\rho)||_{H^1(\TT)}^2  \, \dd \tau
        +
        \varepsilon\int_0^t \TTint \mu |\partial_{xx} u|^2 \, \dd x \, \dd \tau \\
        &\leq 
        C(T_0)C(\varepsilon)\int_0^t 1 \, \dd \tau + \varepsilon \int_0^t \TTint \mu |\partial_{xx}u|^2 \, \dd x \, \dd \tau.
    \end{align*}
    Next, we use the relation
    \begin{align*}
        \partial_x \rho (\partial_t u + u \partial_x u) 
        =
        \frac{\partial_x \rho}{\rho} (\mu \partial_{xx}u - \partial_x\Parti(\rho) + \coup \rho \partial_x c),
    \end{align*}
    to obtain with Young's inequality
    \begin{align*}
        \biggl| \TTint \partial_x \rho (\partial_t u + u \partial_x u ) \partial_x u \, \dd x\biggr|
        &\leq
        \sqrt{\mu}\biggl|\biggl|\frac{1}{\rho}\biggr|\biggr|_{L^\infty(\TT)} ||\rho||_{H^1(\TT)} ||\partial_x u||_{L^\infty(\TT)} \biggl( \TTint \mu |\partial_{xx}u|^2 \, \dd x\biggr)^{\frac{1}{2}}\\
        &\qquad+
        \biggl|\biggl|\frac{1}{\rho}\biggr|\biggr|_{L^\infty(\TT)} ||\rho||_{H^1(\TT)} ||\partial_x \Parti(\rho)||_{L^2(\TT)} ||\partial_x u||_{L^\infty(\TT)}\\
        &\qquad+
        \coup ||\partial_x \rho ||_{H^1(\TT)} ||\partial_x c||_{L^\infty(\TT)} ||\partial_xu||_{L^2(\TT)}\\
        &\leq
        C(T_0) C(\varepsilon)\biggl(1 + ||\partial_x u||^2_{L^2(\TT)} \biggr) 
        +
        \varepsilon \TTint \mu |\partial_{xx}u|^2 \, \dd x,
    \end{align*}
    where we have used Proposition~\ref{uniform-in-time-H^1} to bound the $H^1$-norms of $\rho$, the control on $\partial_x c$ that is provided by Proposition~\ref{prop:square control on density} and the interpolation inequality
    \begin{align*}
        ||\partial_x u||_{L^\infty(\TT)} \leq C ||\partial_x u||_{L^2(\TT)}^{\frac{1}{2}} ||\partial_{xx}u||_{L^2(\TT)}^\frac{1}{2},
    \end{align*}
    for some constant $C>0$.
    This relation yields
    \begin{align*}
        &\biggl| \int_0^t \TTint \partial_x \rho ( \partial_t u + u \partial_x u) \partial_x u \, \dd x \, \dd \tau\biggr|\\
        &\leq
        C(T_0)C(\varepsilon)\int_0^t \biggl(1 + ||\partial_x u||_{L^2(\TT)}^2 \biggr) \, \dd \tau 
        +
        \varepsilon\int_0^t \TTint \mu |\partial_{xx}u|^2 \, \dd x \, \dd \tau .
    \end{align*}
    By similar arguments, we find
    \begin{align*}
        \biggl| \int_0^t \TTint \rho |\partial_x u|^2 \partial_x u \, \dd x \, \dd \tau\biggr|
        \leq
        C(T_0)C(\varepsilon) \int_0^t \biggl(1 + ||\partial_x u||^4_{L^2(\TT)} \biggr) \, \dd \tau
        +
        \varepsilon\int_0^t \TTint \mu|\partial_{xx}u|^2 \, \dd x \, \dd \tau.
    \end{align*}
    Finally, we estimate with Proposition~\ref{prop:square control on density}
    \begin{align*}
        \biggl| \int_0^t \TTint \coup \rho \partial_x c \partial_{xx }u \, \dd x \, \dd \tau\biggr|
        &\leq
        C(T_0)C(\varepsilon) \int_0^t \TTint |\partial_x c|^2 \, \dd x \, \dd \tau
        +
        \varepsilon\int_0^t \TTint \mu |\partial_{xx}u|^2 \, \dd x \, \dd \tau \\
        &\leq C(T_0) C(\varepsilon)\int_0^t 1\, \dd \tau+ \varepsilon\int_0^t \TTint \mu |\partial_{xx}u|^2 \, \dd x \, \dd \tau.
    \end{align*}
    Choosing $\varepsilon= \frac{1}{8}$, we obtain from the preceding estimates the relation
    \begin{align*}
        &\frac{1}{2} \TTint \rho(t,x) |\partial_x u(t,x)|^2 \, \dd x
        +
        \frac{1}{2} \int_0^t \TTint \mu|\partial_{xx}u|^2 \, \dd x \, \dd \tau\\
        &\leq
        \frac{1}{2} \TTint \rho_0|\partial_x u_0|^2  \, \dd x
        +
        C(T_0) \int_0^t\biggl( 1 +  ||\partial_x u||^4_{L^2(\TT)}\biggr) \, \dd \tau\\
        &\leq
        C(T_0) + C(T_0) \int_0^t\biggl(1 +  ||\partial_x u||^4_{L^2(\TT)} \biggr) \, \dd \tau.
    \end{align*}
    Using Proposition~\ref{Prop:Energy Dissipation} and applying Gronwall's inequality yields
    \begin{align*}
        ||\partial_x u||_{L^\infty(0,T_0;L^2(\TT))} \leq C(T_0).
    \end{align*}
    Together with Proposition~\ref{Prop:Energy Dissipation}, this yields the claim.
\end{proof}


\section{Refined A-Priori Estimates}\label{Refined A-Priori Estimates}

In this section, we prove an a-priori estimate concerning the effective viscous flux that is necessary to perform the homogenization procedure in Section~\ref{Homogenization}.
Since we will assume strong oscillations in the initial densities during the homogenization process, it is crucial that this a-priori estimate does only depend on $M_0, ||u_0||_{H^1(\TT)}, \mu, \coup ,\kappa$, but not on quantities that involve the derivative of the density.
Let us recall that we have already proven two such a-priori estimates in Section~\ref{Construction of Global Strong Solutions}, namely Proposition~\ref{Prop:Energy Dissipation} and Proposition~\ref{prop:square control on density}.
However, these a-priori estimates are not strong enough to control the homogenization procedure.
Following \cite{Hillairet_New_Physical}, we introduce for a global-in-time strong solution $(\rho,u,c)$ of $\eqref{NSK Elliptic 1}, \eqref{NSK Elliptic initial condition}$ the effective viscous flux as
\begin{align*}
    \Sigma := \mu \partial_x u - \Parti(\rho)
\end{align*}
and accordingly the initial effective viscous flux as
\begin{align*}
    \Sigma_0 := \mu \partial_x u_0 - \Parti(\rho_0).
\end{align*}
We then prove the following estimate.

\begin{theorem}\label{Thm:Refined A Priori Estimates}
    Assume that the hypotheses of Theorem~\ref{Thm:Global Existence and Uniqueness} hold true and let $(\rho,u,c)$ denote the global-in-time strong solution of $\eqref{NSK Elliptic 1}, \eqref{NSK Elliptic initial condition}$.\\
    Then there exist some time $T_0>0$ and some constant $C_0>0$, both only depending on $M_0, ||u_0||_{H^1(\TT)},\coup,\kappa, \mu$, such that we have
    \begin{align}\label{Thm:Refined A Priori Estimate:Result1}
        (2M_0)^{-1} \leq \rho(t,x) \leq 2M_0 \quad \forall \, (t,x) \in [0,T_0]\times\TT,
    \end{align}
    and 
    \begin{align}\label{Thm:Refined A Priori Estimate:Result2}
        ||u||_{L^\infty(0,T_0;H^1(\TT))}^2
        +
        ||\partial_x \Sigma||_{L^2(0,T_0;L^2(\TT))}^2
        \leq
        C_0.
    \end{align}
\end{theorem}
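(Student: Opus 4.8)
The plan is a continuation (bootstrap) argument anchored at the density bound \eqref{Thm:Refined A Priori Estimate:Result1}, combined with a Hoff-type estimate for the effective viscous flux $\Sigma=\mu\partial_xu-\Parti(\rho)$. Since $(\rho,u,c)$ is the global-in-time strong solution of Theorem~\ref{Thm:Global Existence and Uniqueness} and $\rho\in C([0,\infty);H^1(\TT))\subseteq C([0,\infty)\times\TT)$ with $M_0^{-1}\le\rho_0\le M_0$, the time
\[
 T^{\ast}:=\sup\bigl\{t\ge 0 \ :\ (2M_0)^{-1}\le\rho(s,x)\le 2M_0 \text{ for all } (s,x)\in[0,t]\times\TT\bigr\}
\]
is strictly positive, because $[M_0^{-1},M_0]$ is compactly contained in the open interval $((2M_0)^{-1},2M_0)$ and $\rho$ is continuous in time with values in $C(\TT)$. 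On $[0,T^{\ast}]$ the a-priori information already gathered is available with constants \emph{not} involving $\|\rho_0\|_{H^1(\TT)}$: Proposition~\ref{Prop:Energy Dissipation} bounds $\|\sqrt{\rho}\,u\|_{L^\infty(0,\infty;L^2(\TT))}$ and $\|\partial_xu\|_{L^2(0,\infty;L^2(\TT))}$, whence (using $\rho\ge(2M_0)^{-1}$) also $\|u\|_{L^\infty(0,T^{\ast};L^2(\TT))}$; Lemma~\ref{lem:elliptic regularity} together with Proposition~\ref{prop:square control on density} bounds $\|c\|_{L^\infty(0,\infty;W^{1,\infty}(\TT))}$; and since $\rho$ takes values in the compact interval $[(2M_0)^{-1},2M_0]$ there, the quantities $\|\Parti(\rho)\|_{L^\infty(\TT)}$, $\|\Parti'(\rho)\rho\|_{L^\infty(\TT)}$ and $\|\rho^{-1}\|_{L^\infty(\TT)}$ are bounded by a constant depending only on $M_0,\coup$ and $P$. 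Throughout, $C_{\star}>0$ denotes a generic constant depending only on $M_0,\|u_0\|_{H^1(\TT)},\coup,\kappa,\mu$ (and the fixed law $P$).

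The heart of the matter is an estimate for $\Sigma$ on $[0,T^{\ast}]$. Set $\dot u:=\partial_tu+u\,\partial_xu$. Combining the momentum equation \eqref{rewritten momentum} with the continuity equation $\eqref{NSK Elliptic 1}_1$ yields $\partial_x\Sigma=\rho\,\dot u-\coup\,\rho\,\partial_xc$. Multiplying this identity by $\dot u$, integrating over $\TT$, using $\mu\partial_xu=\Sigma+\Parti(\rho)$ to express $\partial_x\dot u$ through the material derivative of $\Sigma$, and regularizing the continuity equation in space exactly as in the proof of Proposition~\ref{prop:BD} (cf.\ \cite{Mellet}), one is led to an identity of the form
\[
 \frac{1}{2\mu}\frac{\dd}{\dd t}\TTint\Sigma^2\,\dd x + \TTint\rho\,|\dot u|^2\,\dd x
 = -\frac{1}{2\mu^2}\TTint\Sigma^3\,\dd x + \TTint\mathcal R\,\dd x + \coup\TTint\rho\,\partial_xc\,\dot u\,\dd x ,
\]
where $\mathcal R$ gathers terms that are at most quadratic in $\Sigma$, with coefficients bounded by $C_{\star}$ because $\rho\in[(2M_0)^{-1},2M_0]$. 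The last term is $\le\tfrac14\int_\TT\rho|\dot u|^2\,\dd x+C_{\star}$ by Cauchy--Schwarz, the bound on $\|\partial_xc\|_{L^\infty(\TT)}$, and conservation of mass. For the cubic term, the one-dimensional interpolation $\|\Sigma\|_{L^\infty(\TT)}\le C(\|\Sigma\|_{L^2(\TT)}^{1/2}\|\partial_x\Sigma\|_{L^2(\TT)}^{1/2}+\|\Sigma\|_{L^2(\TT)})$, the bound $\|\partial_x\Sigma\|_{L^2(\TT)}^2\le 4M_0\int_\TT\rho|\dot u|^2\,\dd x+C_{\star}$ (again from $\partial_x\Sigma=\rho\dot u-\coup\rho\partial_xc$ and $\rho\le 2M_0$), and Young's inequality give, with $A(t):=\|\Sigma(t)\|_{L^2(\TT)}^2$,
\[
 \Bigl|\tfrac{1}{2\mu^2}\TTint\Sigma^3\,\dd x\Bigr| + \Bigl|\TTint\mathcal R\,\dd x\Bigr|
 \le \tfrac14\TTint\rho|\dot u|^2\,\dd x + C_{\star}\bigl(1+A^{5/3}\bigr).
\]
Hence $A'(t)\le C_{\star}(1+A(t)^{5/3})$ on $[0,T^{\ast}]$. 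Since $A(0)=\|\Sigma_0\|_{L^2(\TT)}^2\le 2\mu^2\|u_0\|_{H^1(\TT)}^2+C_{\star}=:A_0$, a comparison with the scalar ODE $y'=C_{\star}(1+y^{5/3})$, $y(0)=A_0+1$ (which stays finite on a time interval depending only on $A_0$ and $C_{\star}$), produces a time $T_1>0$, depending only on $M_0,\|u_0\|_{H^1(\TT)},\coup,\kappa,\mu$, such that $A(t)\le A_0+1$ for $t\in[0,\min\{T^{\ast},T_1\}]$. Integrating the flux identity in time then bounds $\int_0^{\min\{T^{\ast},T_1\}}\int_\TT\rho|\dot u|^2\,\dd x\,\dd t$, hence $\|\partial_x\Sigma\|_{L^2(0,\min\{T^{\ast},T_1\};L^2(\TT))}^2$, and, via $\mu\partial_xu=\Sigma+\Parti(\rho)$, also $\|u\|_{L^\infty(0,\min\{T^{\ast},T_1\};H^1(\TT))}^2$, all by $C_{\star}$.

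It remains to upgrade the density bound and close the continuation. From the continuity equation, $\partial_t(\log\rho)+u\,\partial_x(\log\rho)=-\partial_xu$, so (transport estimate, using $\rho>0$) for $t\in[0,T^{\ast}]$
\[
 \|\log\rho(t)\|_{L^\infty(\TT)}\le\log M_0+\int_0^t\|\partial_xu(s)\|_{L^\infty(\TT)}\,\dd s .
\]
On $[0,\min\{T^{\ast},T_1\}]$ we bound $\|\partial_xu\|_{L^\infty(\TT)}\le\tfrac1\mu(\|\Sigma\|_{L^\infty(\TT)}+\|\Parti(\rho)\|_{L^\infty(\TT)})\le C_{\star}(\|\Sigma\|_{L^2(\TT)}^{1/2}\|\partial_x\Sigma\|_{L^2(\TT)}^{1/2}+\|\Sigma\|_{L^2(\TT)}+1)$, and then Hölder's inequality in time together with the bounds on $\Sigma$ from the preceding step yields $\int_0^t\|\partial_xu(s)\|_{L^\infty(\TT)}\,\dd s\le C_{\star}(t^{3/4}+t)$. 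Choosing $T_0\in(0,T_1]$ so small (depending only on $C_{\star}$, hence only on the admissible data) that $C_{\star}(t^{3/4}+t)<\log\tfrac32$ for $t\in[0,T_0]$, we obtain $\rho(t,x)\in[\tfrac{2}{3M_0},\tfrac{3M_0}{2}]$ for all $(t,x)\in[0,\min\{T^{\ast},T_0\}]\times\TT$. Since $[\tfrac{2}{3M_0},\tfrac{3M_0}{2}]$ is a compact subset of the open interval $((2M_0)^{-1},2M_0)$ and $\rho\in C([0,\infty)\times\TT)$, the definition of $T^{\ast}$ forces $T^{\ast}>T_0$: if $T^{\ast}\le T_0$, then by continuity $\rho$ would stay in $((2M_0)^{-1},2M_0)$ on $[0,T^{\ast}+\varepsilon]\times\TT$ for some $\varepsilon>0$, contradicting maximality of $T^{\ast}$. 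Hence $[0,T_0]\subseteq[0,T^{\ast})$, which gives \eqref{Thm:Refined A Priori Estimate:Result1}; together with the flux bound from the preceding paragraph this gives \eqref{Thm:Refined A Priori Estimate:Result2} with $C_0:=C_{\star}$.

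I expect the main obstacle to be the cubic term $-\tfrac{1}{2\mu^2}\int_\TT\Sigma^3$ in the flux identity: it obstructs any global-in-time bound and is precisely what localizes the estimate in time, with $T_0$ tracing back to the finite blow-up time of $y'=C_{\star}(1+y^{5/3})$; the non-monotonicity of $P$ is irrelevant at this stage, since only $\Parti$ — monotone by admissibility — enters the identity. The new term $\coup\,\rho\,\partial_xc$ from the non-local capillarity, absent in \cite{Hillairet_New_Physical}, enters only through lower-order contributions, because the elliptic estimate of Lemma~\ref{lem:elliptic regularity} (used via Proposition~\ref{prop:square control on density}) supplies a uniform bound on $\|\partial_xc\|_{L^\infty(\TT)}$ of exactly the required strength; this is where the non-local model is genuinely more tractable than the original capillarity term $\kappa\rho\,\partial_{xxx}\rho$.
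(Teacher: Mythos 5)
Your proof is correct but uses a genuinely different route from the paper. Where the paper (Lemma~\ref{Lemma 3: Better Bound on the Effective Pressure}) bootstraps simultaneously on the density bound $\eqref{A Priori Estimates: Local-in-Time Bound density}$ \emph{and} the bound $\eqref{A Priori Estimate: Local-in-Time Bound effective pressure}$ on $\|\partial_x u\|_{L^\infty L^2}^2 + \|\partial_x\Sigma\|_{L^2L^2}^2$, and tests the equation $\rho(\partial_t u + u\partial_x u - \coup\partial_x c)=\partial_x\Sigma$ against the specially designed multiplier $m=\mu\partial_t u - \prim[\partial_t \Parti(\rho)-\EE[\partial_t\Parti(\rho)]]$ — whose spatial derivative reproduces $\partial_t\Sigma$ up to a constant, leading to a \emph{linear} Gronwall inequality in $\|\Sigma\|_{L^2}^2$ — you instead test against the Hoff-type multiplier $\dot u=\partial_t u+u\partial_x u$. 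Expanding $\partial_x\dot u$ via $\mu\partial_x u=\Sigma+\Parti(\rho)$ and using $\partial_t\Parti(\rho)=-u\partial_x\Parti(\rho)-\Parti'(\rho)\rho\,\partial_x u$ from the continuity equation, the dangerous $\int\Sigma\,u\,\partial_x\Parti(\rho)$ terms cancel exactly, and after collecting terms one indeed lands on your claimed identity with the cubic term $-\tfrac{1}{2\mu^2}\int\Sigma^3$, the lower-order remainder $\mathcal R$ with $\rho$-dependent bounded coefficients $\Parti'(\rho)\rho-\tfrac32\Parti(\rho)$ and $\Parti(\rho)(\Parti'(\rho)\rho-\Parti(\rho))$, and the nonlocal contribution $\coup\int\rho\,\partial_x c\,\dot u$ which is absorbed thanks to Proposition~\ref{prop:square control on density}. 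The price for the simpler multiplier is the cubic nonlinearity and the resulting Gronwall bound $A'\le C_\star(1+A^{5/3})$ with finite blow-up time; the gain is a cleaner logical structure — your continuation argument only has to bootstrap the single density inequality $\eqref{A Priori Estimates: Local-in-Time Bound density}$, since the $\Sigma$-estimate is self-contained once the density bound holds, while the paper must run the continuation on both $\eqref{A Priori Estimates: Local-in-Time Bound density}$ and $\eqref{A Priori Estimate: Local-in-Time Bound effective pressure}$. Your density-recovery step via $\log\rho$-transport, $\|\partial_x u\|_{L^\infty(\TT)}\lesssim \|\Sigma\|_{L^2}^{1/2}\|\partial_x\Sigma\|_{L^2}^{1/2}+\|\Sigma\|_{L^2}+1$, and Hölder in time (giving a $t^{3/4}$ factor) mirrors Lemma~\ref{Lemma1: Refined Velocity and Density Bound}. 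One small slip: with initial datum $y(0)=A_0+1>A(0)$ for the comparison ODE, you cannot conclude $A(t)\le A_0+1$ (the comparator increases); you should conclude $A(t)\le y(t)\le C'$ on $[0,T_1]$ for a constant $C'$ depending only on $A_0$ and $C_\star$ and with $T_1$ chosen below the blow-up time — this is cosmetic and does not affect the argument.
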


The proof of Theorem~\ref{Thm:Refined A Priori Estimates} follows the lines in \cite{Hillairet_New_Physical} via two lemmata. To shorten the notation, let us introduce the following quantities:

\begin{align*}
    \Kparti &:= \max\limits_{\lambda \in [(2M_0)^{-1}, 2M_0]} \bigl\{ |\Parti(\lambda)| \bigr\},\\
    \Ku &:= \frac{8}{\mu^2}\biggl(
        2 + 5 M_0 \mu
        \biggr)
        \biggl(||\Sigma_0||_{L^2(\TT)}^2 +1 + |\Kparti|^2\biggr),\\
    \Kd &:= \frac{1}{\mu} \biggl[ \Csob \sqrt{2 \mu E_0 + \Ku + 2 |\Kparti|^2 } + \Kparti \biggr].
\end{align*}

Here, $\Csob>0$ denotes the constant of the Sobolev embedding $H^1(\TT) \hookrightarrow L^\infty(\TT)$.
We emphasize, that $\Kparti, \Ku$ and $\Kd$ only depend on $M_0,||u_0||_{H^1(\TT)},\mu,\coup$ and $\kappa$. 
Being in the strong solution framework, we may apply a continuity argument to find some small time $\tilde{T}_0 \in (0,\infty)$, such that we have
\begin{align}\label{A Priori Estimates: Local-in-Time Bound density}
    (2M_0)^{-1} \leq \rho(t,x) \leq 2M_0 \quad \forall \, (t,x) \in [0,\tilde{T}_0] \times \TT,
\end{align}
and
\begin{align}\label{A Priori Estimate: Local-in-Time Bound effective pressure}
    ||\partial_x u||_{L^\infty(0,\tildeT;L^2(\TT))}^2
    +
    ||\partial_x \Sigma||_{L^2(0,\tildeT,L^2(\TT)}^2
    \leq \Ku.
\end{align}
In order to conclude the proof of Theorem~\ref{Thm:Refined A Priori Estimates}, we have to show that $\tildeT$ in fact only depends on the allowed quantities.
This will be done as follows:
We show that there exists some time $T_0 \in (0,\infty)$, only depending on the allowed quantities, such that if $T_0 < \tildeT$, we have in fact sharper versions of the inequalities $\eqref{A Priori Estimates: Local-in-Time Bound density}$ and $\eqref{A Priori Estimate: Local-in-Time Bound effective pressure}$ on $[0,T_0]$.
Then, via a connectedness argument we prove that this already implies $\tildeT=T_0$.
To prove the sharper estimates, we proceed as in \cite{Hillairet_New_Physical} via two lemmata.
For the first lemma, we can use the arguments demonstrated in \cite{Hillairet_New_Physical}, since these only rely on the continuity equation $\eqref{NSK Elliptic 1}_1$ and an a-priori estimate for the derivative of the velocity, that is provided by Proposition~\ref{Prop:Energy Dissipation}.
Therefore, we will refer for details concerning the proof to \cite{Hillairet_New_Physical}.
However, the second lemma relies on the momentum equation.
Therefore, we have to modify the proof given in \cite{Hillairet_New_Physical}. The crucial ingredient making our modification work is the control on the order parameter provided by Proposition~\ref{prop:square control on density}.

\begin{lemma}\label{Lemma1: Refined Velocity and Density Bound}
    Assume that the hypotheses of Theorem~\ref{Thm:Global Existence and Uniqueness} hold true and let $(\rho,u,c)$ denote the global-in-time strong solution of $\eqref{NSK Elliptic 1}, \eqref{NSK Elliptic initial condition}$.
    Suppose that for some $\tildeT \in (0,1)$, the inequalities $\eqref{A Priori Estimates: Local-in-Time Bound density}$ and $\eqref{A Priori Estimate: Local-in-Time Bound effective pressure}$ hold on $[0,\tilde{T_0}]$.\\
    Then there exists some $T_\rho \in (0,\infty)$, only depending on $M_0, ||u_0||_{H^1(\TT)},\mu,\coup,\kappa$, such that if $\tildeT<T_\rho$, we have
    \begin{align*}
        ||\partial_x u||_{L^1(0,\tildeT;L^\infty(\TT))} \leq \sqrt{\tildeT} \Kd,
    \end{align*}
    and
    \begin{align}\label{Lemma 1: Refined Velocity and Density Bound: Result 1}
        \frac{2}{3M_0} \leq \rho(t,x) \leq \frac{3}{2}M_0 \quad \forall \, (t,x) \in [0,\tildeT] \times \TT.
    \end{align}
\end{lemma}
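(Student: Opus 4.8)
The plan is to establish the two conclusions in sequence: the $L^1(0,\tildeT;L^\infty(\TT))$ bound on $\partial_x u$ from the definition of the effective viscous flux together with the bootstrap hypothesis $\eqref{A Priori Estimate: Local-in-Time Bound effective pressure}$ and the energy estimate of Proposition~\ref{Prop:Energy Dissipation}, and then the refined density bound $\eqref{Lemma 1: Refined Velocity and Density Bound: Result 1}$ as a transport/renormalization argument for the continuity equation. This is exactly the structure of the corresponding step in \cite{Hillairet_New_Physical}; the present difficulty is only to make sure all constants stay expressible through the admissible data.

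First I would bound $\partial_x u$. By definition $\partial_x u=\mu^{-1}(\Sigma+\Parti(\rho))$, and on $[0,\tildeT]$ the bootstrap bound $\eqref{A Priori Estimates: Local-in-Time Bound density}$ gives $\rho(t,x)\in[(2M_0)^{-1},2M_0]$, hence $|\Parti(\rho(t,x))|\le\Kparti$; with the Sobolev embedding $H^1(\TT)\hookrightarrow L^\infty(\TT)$ this yields $\|\partial_x u(t)\|_{L^\infty(\TT)}\le\mu^{-1}(\Csob\|\Sigma(t)\|_{H^1(\TT)}+\Kparti)$. Integrating over $[0,\tildeT]$ and using Cauchy--Schwarz in time reduces the claim to controlling $\|\Sigma\|_{L^2(0,\tildeT;H^1(\TT))}$. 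Here $\eqref{A Priori Estimate: Local-in-Time Bound effective pressure}$ gives $\int_0^{\tildeT}\|\partial_x\Sigma\|_{L^2(\TT)}^2\le\Ku$, while Proposition~\ref{Prop:Energy Dissipation} gives $\mu\int_0^{\tildeT}\|\partial_x u\|_{L^2(\TT)}^2\le E_0$, so that $\int_0^{\tildeT}\|\Sigma\|_{L^2(\TT)}^2\le 2\mu E_0+2|\Kparti|^2$ (using $|\Parti(\rho)|\le\Kparti$ and $\tildeT<1$). Summing, $\|\Sigma\|_{L^2(0,\tildeT;H^1(\TT))}^2\le 2\mu E_0+\Ku+2|\Kparti|^2$, and therefore, using once more $\tildeT\le\sqrt{\tildeT}$,
\[
\|\partial_x u\|_{L^1(0,\tildeT;L^\infty(\TT))}\le\frac{\sqrt{\tildeT}}{\mu}\Bigl(\Csob\sqrt{2\mu E_0+\Ku+2|\Kparti|^2}+\Kparti\Bigr)=\sqrt{\tildeT}\,\Kd .
\]
Note that this step needs no smallness of $\tildeT$ beyond $\tildeT<1$, and no estimate involving a derivative of $\rho$.

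Next I would use this to control $\rho$. Since $\rho>0$ is a strong solution and, by the previous step, $\partial_x u\in L^1(0,\tildeT;L^\infty(\TT))$, I would renormalize the continuity equation: dividing $\partial_t\rho+u\partial_x\rho+\rho\partial_x u=0$ by $\rho$ gives $(\partial_t+u\partial_x)\ln\rho=-\partial_x u$. Transporting along the flow of $u$ --- well defined in the Carath\'eodory sense because $u(t,\cdot)$ is Lipschitz with an $L^1$-in-time Lipschitz constant --- gives $\|\ln\rho(t)\|_{L^\infty(\TT)}\le\|\ln\rho_0\|_{L^\infty(\TT)}+\|\partial_x u\|_{L^1(0,\tildeT;L^\infty(\TT))}\le\ln M_0+\sqrt{\tildeT}\,\Kd$ for all $t\in[0,\tildeT]$, i.e. $M_0^{-1}e^{-\sqrt{\tildeT}\Kd}\le\rho(t,x)\le M_0 e^{\sqrt{\tildeT}\Kd}$. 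Finally I would set $T_\rho:=\min\{1,(\ln(3/2)/\Kd)^2\}$, which depends only on $M_0,\|u_0\|_{H^1(\TT)},\mu,\coup,\kappa$ because $\Kd$ does; then $\tildeT<T_\rho$ forces $e^{\pm\sqrt{\tildeT}\Kd}\in(2/3,3/2)$, which is precisely $\eqref{Lemma 1: Refined Velocity and Density Bound: Result 1}$.

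The computation is otherwise routine; the only point that needs care is making the renormalized continuity equation rigorous even though a priori $u$ is only $H^1$, hence merely Hölder, in space. This is legitimate exactly because the first step has already upgraded $\partial_x u$ to $L^1(0,\tildeT;L^\infty(\TT))$, putting the transport equation for $\ln\rho$ in the Carath\'eodory / DiPerna--Lions regime; alternatively one argues at the level of the flow of $u$, or invokes the renormalized-solution machinery directly. The other thing to watch is that every constant stays expressed through $E_0,\Kparti,\Ku,\Kd$, none of which involves $\partial_x\rho$, so that the estimate remains uniform under the oscillating initial densities used in Section~\ref{Homogenization}.
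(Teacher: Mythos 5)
Your proof is correct and follows exactly the approach the paper delegates to (the paper's proof is a one-line citation of Propositions 6 and 7 in \cite{Hillairet_New_Physical}): first use $\partial_x u=\mu^{-1}(\Sigma+\Parti(\rho))$, the Sobolev embedding, Cauchy--Schwarz in time, the bootstrap bound $\eqref{A Priori Estimate: Local-in-Time Bound effective pressure}$ and Proposition~\ref{Prop:Energy Dissipation} to get the $L^1(0,\tildeT;L^\infty(\TT))$ bound with exactly the constant $\Kd$; then transport $\ln\rho$ along the flow of $u$ to obtain the exponential density bound and choose $T_\rho$ so the exponential lands in $(2/3,3/2)$. The only minor remark is that your closing caveat about the Carath\'eodory/DiPerna--Lions regime is unnecessary here: in the strong-solution class $u\in L^2_{\mathrm{loc}}(0,\infty;H^2(\TT))$, so $\partial_x u\in L^2(0,T_0;L^\infty(\TT))$ already holds by Sobolev embedding and the flow map is Lipschitz from the outset, independently of the first step.
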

\begin{proof}
    Both estimates follow from Proposition~\ref{Prop:Energy Dissipation} by the arguments presented in the proof of Proposition 6 and Proposition 7 in \cite{Hillairet_New_Physical}.
\end{proof}


For the second lemma, we also follow the proof in \cite{Hillairet_New_Physical}. 
This proof relies on the momentum equation of the compressible Navier--Stokes equations. 
Using the artificial pressure function, we enter the framework of \cite{Hillairet_New_Physical}, but we have to deal with the additional term $\coup \rho \partial_x c$ in the momentum equation.
However, using the control provided by Proposition~\ref{prop:square control on density}, the method of proof given in \cite{Hillairet_New_Physical} also applies for our situation.

\begin{lemma}\label{Lemma 3: Better Bound on the Effective Pressure}
    Assume that the hypotheses of Theorem~\ref{Thm:Global Existence and Uniqueness} hold true and let $(\rho,u,c)$ denote the global-in-time strong solution of $\eqref{NSK Elliptic 1}, \eqref{NSK Elliptic initial condition}$.
    Suppose that for some $\tildeT \in (0,1)$, the inequalities $\eqref{A Priori Estimates: Local-in-Time Bound density}$ and $\eqref{A Priori Estimate: Local-in-Time Bound effective pressure}$ hold on $[0,\tilde{T_0}]$.\\
    Then there exists some time $T_u \in (0,\infty)$ only depending on $M_0, ||u_0||_{H^1(\TT)}, \mu,\coup,\kappa$, such that if $\tildeT<T_u$, we have
    \begin{align}\label{Lemma 3: Better Bound on the Effective Pressure:Result1}
        ||\partial_x u||_{L^\infty(0,\tildeT;L^2(\TT))}^2
        +
        ||\partial_x \Sigma||_{L^2(0,\tildeT;L^2(\TT))}^2
        \leq
        \frac{4}{\mu^2}
        \biggl(
        2 + 5M_0\mu
        \biggr)
        \biggl(||\Sigma_0||_{L^2(\TT)}^2 + 1 + |\Kparti|^2\biggr).
    \end{align}
\end{lemma}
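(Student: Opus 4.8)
The plan is to test the momentum equation with the material derivative $\dot u:=\partial_t u+u\partial_x u$, which produces an energy identity for the effective viscous flux $\Sigma$, and then to exploit the smallness of $\tildeT$ together with Lemma~\ref{Lemma1: Refined Velocity and Density Bound} and Proposition~\ref{prop:square control on density} in order to improve the constant in \eqref{A Priori Estimate: Local-in-Time Bound effective pressure} by a factor two. Throughout, I additionally assume $\tildeT<T_\rho$; this is legitimate since $T_u$ will be chosen with $T_u\leq T_\rho$, and it makes Lemma~\ref{Lemma1: Refined Velocity and Density Bound} applicable. As already observed in the proof of Proposition~\ref{prop:uniform-in-time velocity}, both sides of the momentum equation lie in $L^2(0,\tildeT;L^2(\TT))$, and by the regularity of the strong solution $\dot u\in L^2(0,\tildeT;L^2(\TT))$, so that the test is legitimate after the usual regularization of the continuity equation, cf.\ \cite{Mellet}.

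First I would rewrite the momentum equation as $\rho\dot u=\partial_x\Sigma+\coup\rho\partial_x c$, using the consequence $\partial_t(\rho u)+\partial_x(\rho u^2)=\rho\dot u$ of the continuity equation, multiply by $\dot u$, and integrate over $\TT$. Carrying out the integrations by parts with the help of $\mu\partial_x u=\Sigma+\Parti(\rho)$ and of the identity $\partial_t\Parti(\rho)+\partial_x(u\,\Parti(\rho))=(\Parti(\rho)-\rho\,\Parti'(\rho))\partial_x u$, organized so that every term carrying $\partial_x\rho$ cancels, one arrives at the identity, valid for a.e.\ $t\in[0,\tildeT]$,
\begin{align*}
    \TTint\rho\dot u^2\,\dd x+\frac{1}{2\mu}\frac{\dd}{\dd t}\|\Sigma\|_{L^2(\TT)}^2
    &=\frac{1}{2\mu}\TTint\partial_x u\,\Parti(\rho)^2\,\dd x-\frac{1}{\mu}\TTint\rho\,\Parti(\rho)\,\Parti'(\rho)\,\partial_x u\,\dd x\\
    &\quad-\frac{\mu}{2}\TTint(\partial_x u)^3\,\dd x+\TTint\rho\,\Parti'(\rho)(\partial_x u)^2\,\dd x+\coup\TTint\rho\,\partial_x c\,\dot u\,\dd x .
\end{align*}
The decisive feature is that no term involves the derivative of $\rho$, which is exactly what keeps the resulting bound insensitive to the density oscillations; this is the structural heart of the argument, and it is the same cancellation that underlies the analogous estimate in \cite{Hillairet_New_Physical}.

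Next I would integrate this identity over $[0,t]$ for $t\leq\tildeT$ and estimate the five terms on the right. Using the standing assumptions $(2M_0)^{-1}\leq\rho\leq 2M_0$ and $\|\partial_x u\|_{L^\infty(0,\tildeT;L^2(\TT))}^2\leq\Ku$ from \eqref{A Priori Estimates: Local-in-Time Bound density} and \eqref{A Priori Estimate: Local-in-Time Bound effective pressure}, the boundedness of $\Parti$ and $\Parti'$ on $[(2M_0)^{-1},2M_0]$, and $|\TT|=1$, the first, second and fourth terms are each bounded by $Ct$ with $C$ depending only on the allowed quantities $M_0,\|u_0\|_{H^1(\TT)},\mu,\coup,\kappa$. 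For the cubic term one uses $|\TTint(\partial_x u)^3\,\dd x|\leq\|\partial_x u\|_{L^\infty(\TT)}\|\partial_x u\|_{L^2(\TT)}^2$, so its time integral is at most $\tfrac{\mu}{2}\Ku\,\|\partial_x u\|_{L^1(0,\tildeT;L^\infty(\TT))}\leq\tfrac{\mu}{2}\Ku\,\sqrt{\tildeT}\,\Kd$ by Lemma~\ref{Lemma1: Refined Velocity and Density Bound}. For the last term, $\|c(\tau)\|_{H^2(\TT)}\leq E_1$ (Proposition~\ref{prop:square control on density}), $\rho\leq 2M_0$ and $\|\dot u\|_{L^2(\TT)}^2\leq 2M_0\TTint\rho\dot u^2\,\dd x$ give, via Young's inequality, $\coup\TTint\rho\,\partial_x c\,\dot u\,\dd x\leq\tfrac12\TTint\rho\dot u^2\,\dd x+C$, and the first summand is absorbed on the left. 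One thus obtains, for $t\leq\tildeT$,
\begin{align*}
    \tfrac12\int_0^t\TTint\rho\dot u^2\,\dd x\,\dd\tau+\frac{1}{2\mu}\|\Sigma(t)\|_{L^2(\TT)}^2\leq\frac{1}{2\mu}\|\Sigma_0\|_{L^2(\TT)}^2+A(\tildeT),
\end{align*}
with $A(\tildeT)\leq C(\tildeT+\sqrt{\tildeT})$, $C$ depending only on the allowed quantities. Finally I would convert this back: $\mu\partial_x u=\Sigma+\Parti(\rho)$ and $\|\Parti(\rho)\|_{L^2(\TT)}\leq\Kparti$ give $\|\partial_x u\|_{L^\infty(0,\tildeT;L^2(\TT))}^2\leq\tfrac{2}{\mu^2}(\|\Sigma\|_{L^\infty(0,\tildeT;L^2(\TT))}^2+|\Kparti|^2)$, while $\partial_x\Sigma=\rho\dot u-\coup\rho\partial_x c$ with $\|\partial_x c\|_{L^2(\TT)}\leq E_1$ gives $\|\partial_x\Sigma\|_{L^2(0,\tildeT;L^2(\TT))}^2\leq 4M_0\int_0^{\tildeT}\TTint\rho\dot u^2\,\dd x\,\dd\tau+8\coup^2M_0^2E_1^2\,\tildeT$. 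Choosing $T_u\in(0,T_\rho]$ small enough that $A(\tildeT)$ and $8\coup^2M_0^2E_1^2\tildeT$ are sufficiently small for $\tildeT<T_u$, and inserting the previous display, the left-hand side of \eqref{Lemma 3: Better Bound on the Effective Pressure:Result1} is at most $(\tfrac{2}{\mu^2}+\tfrac{4M_0}{\mu})\|\Sigma_0\|_{L^2(\TT)}^2+\tfrac{2}{\mu^2}|\Kparti|^2$ plus an arbitrarily small constant, which is $\leq\tfrac{4}{\mu^2}(2+5M_0\mu)(\|\Sigma_0\|_{L^2(\TT)}^2+1+|\Kparti|^2)$ once $T_u$ is fixed; this is exactly \eqref{Lemma 3: Better Bound on the Effective Pressure:Result1}.

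I expect the main obstacle to be the rigorous derivation of the energy identity: one must regularize the continuity equation to justify the manipulations involving $\partial_t\partial_x u$ and $\partial_t\Parti(\rho)$, and one must arrange the integrations by parts so that every $\partial_x\rho$-term cancels --- otherwise the bound would depend on $\|\partial_x\rho\|_{L^2(\TT)}$ and be useless for the homogenization of Section~\ref{Homogenization}. By contrast, the additional term $\coup\rho\partial_x c$ compared with \cite{Hillairet_New_Physical} is harmless, because Proposition~\ref{prop:square control on density} (through the elliptic estimate of Lemma~\ref{lem:elliptic regularity}) controls $\partial_x c$ in $L^\infty(0,\tildeT;L^2(\TT))$ with a bound that does not see the density gradient.
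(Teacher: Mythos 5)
Your proof is correct but follows a genuinely different route from the paper's. The paper tests the momentum equation written as $\rho(\dot u-\coup\partial_x c)=\partial_x\Sigma$ against a specially constructed multiplier $m=\mu\partial_t u-\prim[\partial_t\Parti(\rho)-\EE[\partial_t\Parti(\rho)]]$, which is designed so that $m\in L^2(0,T;L^2(\TT))$ from the outset and, formally, $\partial_x m=\partial_t\Sigma+\EE[\partial_t\Parti]$; the pairing $\int\TTint\partial_x\Sigma\,m$ then produces $-\tfrac12\|\Sigma(T)\|^2+\tfrac12\|\Sigma_0\|^2$ plus a mean-correction term, and the bound is closed via a Gronwall argument with $f(t)=\mathcal C(1+\|\partial_x u(t)\|_{L^2}^2)$ and the budget $\int_0^Tf\leq\mathcal C(1+\Ku)T$. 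You instead test against $\dot u$ directly, derive the explicit energy identity
\begin{align*}
\TTint\rho\dot u^2\,\dd x+\frac{1}{2\mu}\frac{\dd}{\dd t}\|\Sigma\|_{L^2(\TT)}^2
&=\frac{1}{2\mu}\TTint\partial_x u\,\Parti(\rho)^2-\frac{1}{\mu}\TTint\rho\,\Parti(\rho)\Parti'(\rho)\partial_x u\\
&\quad-\frac{\mu}{2}\TTint(\partial_x u)^3+\TTint\rho\,\Parti'(\rho)(\partial_x u)^2+\coup\TTint\rho\,\partial_x c\,\dot u,
\end{align*}
in which all $\partial_x\rho$-terms cancel (I checked this; the cancellation between the $\Parti'\partial_x\rho$ contribution from $\partial_t\Parti(\rho)$ and the one from $u\partial_{xx}u$ is the decisive step), and you close by direct smallness of $\tildeT$ with no Gronwall, invoking Lemma~\ref{Lemma1: Refined Velocity and Density Bound} for the cubic term via $\|\partial_x u\|_{L^1(0,\tildeT;L^\infty(\TT))}\leq\sqrt{\tildeT}\Kd$. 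Two practical trade-offs: (i) because of the cubic term, your $T_u$ must be chosen $\leq T_\rho$, which you correctly flag; the paper's proof of this lemma does not use Lemma~\ref{Lemma1: Refined Velocity and Density Bound} and its $T_u$ is independent of $T_\rho$ --- both work for Theorem~\ref{Thm:Refined A Priori Estimates} since it takes $T_0=\min(1,T_\rho,T_u)$ anyway; (ii) both approaches face the same lack of spatial regularity for $\partial_t u$, which the paper sidesteps with a Fourier-series approximation of $u$ (and by building $m$ so that only the right-hand side needs approximating), whereas you propose a mollification-of-the-continuity-equation argument as in \cite{Mellet}; you correctly identify this as the main technical obstacle, and either regularization route should be workable. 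The final arithmetic also checks out: your unabsorbed constant is $\frac{2+4M_0\mu}{\mu^2}\|\Sigma_0\|_{L^2(\TT)}^2+\frac{2}{\mu^2}|\Kparti|^2$ plus an $O(\tildeT+\sqrt{\tildeT})$ remainder, which for $\tildeT<T_u$ small is dominated by $\frac{4}{\mu^2}(2+5M_0\mu)(\|\Sigma_0\|^2_{L^2(\TT)}+1+|\Kparti|^2)$.
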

\begin{proof}
    Throughout this proof, we denote by $\mathcal{C}>0$ a generic positive constant that may vary from line to line but only depends on $M_0, ||u_0||_{H^1(\TT)}, \mu ,\coup ,\kappa $.
    Being in the strong solution framework, it is easy to verify that $\rho$ satisfies the continuity equation $\eqref{NSK Elliptic 1}_1$ in the renormalized sense, that is, we have for any $b \in C^1([0,\infty))$, the equation
    \begin{align}\label{Lemma 3: Better Bounds on the effective Pressure: Proof 1}
        \partial_t b(\rho) + \partial_x(b(\rho) u ) - (b^\prime(\rho) \rho - b(\rho)) \partial_x u = 0 \quad \text{a.e. on } (0,\tildeT)\times \TT.
    \end{align}
    Using this equation with $b = \Parti$, we deduce that $\Parti(\rho)$ satisfies
    \begin{align*}
        \Parti(\rho) \in H^1(0,\tildeT;L^2(\TT)) \cap C([0,\tildeT];C(\TT)) \cap L^2(0,\tildeT;H^1(\TT)).
    \end{align*}
    We fix $T \in (0,\tildeT)$ and define on $(0,T)$ a multiplier via
    \begin{align*}
        m := \mu \partial_t u - \prim\bigl[ \partial_t \Parti(\rho) - \EE[\partial_t \Parti(\rho)] \bigr] \in L^2(0,T;L^2(\TT)).
    \end{align*}
    In the sequel, we omit the argument of $\Parti(\rho)$ for the sake of brevity. 
    We rewrite the momentum equation as
    \begin{align*}
        \rho( \partial_t u + u \partial_x u - \coup \partial_x c )
        = \partial_x \Sigma.
    \end{align*}
    Then, we multiply this equation by $m$ and integrate over space and time to obtain
    \begin{align}\label{Lemma 3: Better Bounds on the effective Pressure: Proof 2}
        \int_0^T \TTint \rho(\partial_t u + u \partial_x u - \coup \partial_x c) m \, \dd x \, \dd t
        =
        \int_0^T \TTint \partial_x \Sigma m \, \dd x  \, \dd t.
    \end{align}
    We notice, that formally $\partial_x m = \partial_t \Sigma + \EE[\partial_t \Parti]$.
    We are not allowed to integrate by parts on the right hand side due to the lack of spatial regularity for $\partial_t u$. 
    However, we may perform a Fourier series approximation of $u$.
    By the regularity of $u$, this approximation is strong enough to show that the right hand side of equation $\eqref{Lemma 3: Better Bounds on the effective Pressure: Proof 2}$ can be rewritten as
    \begin{equation}\label{Lemma 3: Better Bounds on the effective Pressure: Proof 3}
    \begin{aligned}
            \int_0^T \TTint \partial_x\Sigma  m \, \dd x \, \dd t
            =
            -
            \frac{1}{2}\TTint |\Sigma(T,\cdot)|^2 \,  \dd x
            +
            \frac{1}{2}\TTint |\Sigma_0|^2 \,  \dd x
            -
            \int_0^T \EE(\Sigma)\TTint \partial_t \Parti \, \dd x\,  \dd t,
    \end{aligned}
    \end{equation}
    For details concerning the approximation argument we refer to \cite{Hillairet_New_Physical}.
    Concerning the left hand side of equation $\eqref{Lemma 3: Better Bounds on the effective Pressure: Proof 2}$ a straightforward application of Young's inequality yields
    \begin{align*}
        &\int_0^T \TTint \rho ( \partial_t u + u \partial_x u - \coup \partial_x c ) 
        \prim \bigl[ \partial_t \Parti - \EE[\partial_t \Parti] \bigr]\\
        &\leq
        \frac{\mu}{2} \int_0^T \TTint \rho ( |\partial_t u|^2 + |u\partial_x u|^2 + |\coup \partial_x c|^2 )
        +
        \frac{3}{2\mu} \int_0^T \TTint \rho \Bigl| \prim \bigl[ \partial_t \Parti - \EE[ \partial_t \Parti ] \bigr] \Bigr|^2,
    \end{align*}
    which, by virtue of the elementary calculation
    \begin{align*}
        \mu \rho (\partial_t u + u \partial_x u - \coup \partial_x c) \partial_t u
        =&
        \frac{\mu}{2} \rho | \partial_t u + u \partial_x u - \coup \partial_x c |^2 
        +
        \frac{\mu}{2} \rho ( |\partial_t u|^2 - |u\partial_x u|^2 - |\coup \partial_x c|^2 )\\
        &\qquad+ 
        \mu\coup \rho u \partial_x u \partial_x c,
    \end{align*}
    results in
    \begin{align}
            &\int_0^T \TTint \rho ( \partial_t u + u \partial_x u - \coup \partial_x c) m\ \, \dd x \, \dd t \nonumber\\
            &\geq
            \frac{\mu}{2} \int_0^T \TTint \rho |\partial_t u + u \partial_x u - \coup\partial_x c|^2 \, \dd x \, \dd t
            - 
            \mu \int_0^T \TTint \rho |u \partial_x u|^2 \, \dd x \, \dd t\nonumber\\
            &\qquad-
            \mu \coup^2 \int_0^T \TTint \rho |\partial_x c|^2 \, \dd x \, \dd t
            +
            \mu\coup \int_0^T \TTint \rho u \partial_x u \partial_x c \, \dd x \, \dd t\nonumber\\
            &\qquad-
            \frac{3}{2 \mu} \int_0^T \TTint \rho \Bigl|\prim\bigl[ \partial_t \Parti - \EE[\partial_t \Parti]\bigl] \Bigr|^2\, \dd x \, \dd t\nonumber\\
            &\geq
            \frac{\mu}{4 M_0}  \int_0^T \TTint |\partial_x \Sigma|^2 \, \dd x \, \dd t
            - 
            \mu \int_0^T \TTint \rho |u \partial_x u|^2  \, \dd x \, \dd t
            -
            \mu \coup^2 \int_0^T \TTint \rho |\partial_x c|^2\, \dd x \, \dd t\nonumber\\
            &\qquad+
            \mu \coup \int_0^T \TTint \rho u \partial_x u \partial_x c \, \dd x \, \dd t
            -
            \frac{3}{2\mu} \int_0^T \TTint \rho \Bigl| \prim\bigl[ \partial_t \Parti - \EE[ \partial_t \Parti ] \bigr] \Bigr|^2\, \dd x\, \dd t.\label{Lemma 3: Better Bounds on the effective Pressure: Proof 4}
    \end{align}
    In the last inequality, we have used the bound $\eqref{A Priori Estimates: Local-in-Time Bound density}$ for the density $\rho$ and the momentum equation $\eqref{NSK Elliptic 1}_2$. 
    Combining the equations $\eqref{Lemma 3: Better Bounds on the effective Pressure: Proof 2}$, $\eqref{Lemma 3: Better Bounds on the effective Pressure: Proof 3}$ and $\eqref{Lemma 3: Better Bounds on the effective Pressure: Proof 4}$ yields
    \begin{align}
            &\frac{1}{2}||\Sigma(T,\cdot)||_{L^2(\TT)}^2
            +
            \frac{\mu}{4M_0}  ||\partial_x\Sigma||_{L^2(0,T;L^2(\TT))}^2\nonumber\\
            &\leq
            \frac{1}{2} ||\Sigma_0||_{L^2(\TT)}^2
            -
            \int_0^T \EE(\Sigma) \biggl( \TTint \partial_t \Parti \, \dd x \biggr) \,\dd t
            +
            \mu \int_0^T \TTint \rho |u\partial_x u|^2 \,\dd x\, \dd t\nonumber\\
            &\qquad+
            \mu \coup^2 \int_0^T \TTint \rho |\partial_x c|^2 \,\dd x \,\dd t
            -
            \mu \coup \int_0^T \TTint \rho u \partial_x u \partial_x c \, \dd x \,\dd t \nonumber\\
            &\qquad+
            \frac{3}{2\mu} \int_0^T \TTint \rho \Bigl| \prim\bigl[ \partial_t \Parti - \EE[\partial_t \Parti] \bigr] \Bigr|^2\, \dd x \, \dd t\nonumber\\
            &=: \frac{1}{2} ||\Sigma_0||_{L^2(\TT)}^2 + \sum\limits_{i=1}^5 I_i.\label{Lemma 3: Better Bounds on the effective Pressure: Proof 5}
    \end{align}
    We estimate each term on the right hand side of inequality $\eqref{Lemma 3: Better Bounds on the effective Pressure: Proof 5}$ separately.
    For $I_1$, we use the renormalized continuity equation $\eqref{Lemma 3: Better Bounds on the effective Pressure: Proof 1}$, to rewrite $I_1$ as
    \begin{align*}
        I_1 = - \int_0^T \EE[\Sigma] \TTint \Bigl(\Parti^\prime(\rho) \rho - \Parti(\rho)\Bigr) \partial_x u \, \dd x \, \dd t.
    \end{align*}
    Young's inequality and Jensen's inequality provide us
    \begin{align*}
        |I_1| 
        \leq
        \mathcal{C} \int_0^T ||\Sigma(t,\cdot)||_{L^2(\TT)}^2\, \dd t + \mathcal{C} \int_0^T ||\partial_x u(t,\cdot)||_{L^2(\TT)}^2 \,\dd t.
    \end{align*}
    For $I_2$, we use $\eqref{A Priori Estimates: Local-in-Time Bound density}$ and the Sobolev embedding $H^1(\TT) \hookrightarrow L^\infty(\TT)$ together with Proposition~\ref{Prop:Energy Dissipation} to estimate
    \begin{align*}
        |I_2|
        &\leq
        \mathcal{C} \int_0^T ||u(t,\cdot)||_{L^\infty(\TT)}^2 ||\partial_x u(t,\cdot)||_{L^2(\TT)}^2 \, \dd t\\
        &\leq 
        \mathcal{C} \int_0^T 
        \bigl(1 + ||\partial_x u(t,\cdot)||_{L^2(\TT)}^2\bigr) 
        ||\partial_x u(t,\cdot)||_{L^2(\TT)}^2 
        \, \dd t
    \end{align*}
    By virtue of $\partial_x u = \frac{\Sigma + \Parti}{\mu}$ and since $\rho$ is bounded via $\eqref{A Priori Estimates: Local-in-Time Bound density}$, we may estimate $|I_2|$ further as
    \begin{align*}
        |I_2| 
        \leq
        \mathcal{C} \int_0^T 
        \bigl( 1 + ||\partial_x u(t,\cdot)||_{L^2(\TT)}^2 \bigr) \, \dd t
        +
        \mathcal{C} \int_0^T
        \bigl( 1 + ||\partial_x u(t,\cdot)||_{L^2(\TT)}^2 \bigr)
        ||\Sigma(t,\cdot)||_{L^2(\TT)}^2 \, \dd t.
    \end{align*}
    Regarding $I_3$ and $I_4$ we use Proposition~\ref{Prop:Energy Dissipation} and Proposition~\ref{prop:square control on density} to estimate
    \begin{align*}
        |I_3| \leq \mathcal{C} \int_0^T 1 \, \dd t,
    \end{align*}
    and
    \begin{align*}
        |I_4| 
        &\leq
        \mathcal{C} \int_0^T \TTint |u \partial_x u|\, \dd x \, \dd t
        \leq
        \mathcal{C} \int_0^T \TTint |u|^2 \, \dd x \, \dd t + \mathcal{C} \int_0^T \TTint |\partial_x u|^2 \, \dd x \, \dd t\\
        &\leq
        \mathcal{C} \int_0^T \bigl( 1 + ||\partial_x u(t,\cdot)||^2_{L^2(\TT)} \bigr) \, \dd t.
    \end{align*}
    Finally, for $I_5$, we use the renormalized continuity equation $\eqref{Lemma 3: Better Bounds on the effective Pressure: Proof 1}$ to obtain
    \begin{align*}
        \partial_t \Parti - \EE[\partial_t \Parti]
        =
        -\partial_x(\Parti u) 
        - 
        \biggl(
        (\Parti^\prime \rho - \Parti) \partial_x u
        -
        \EE[(\Parti^\prime \rho - \Parti)\partial_x u]
        \biggr),
    \end{align*}
    so that its mean free primitive computes to
    \begin{align*}
        \prim \bigl[
        \partial_t \Parti - \EE[\partial_t \Parti] \bigr]
        =
        -\bigl(
        \Parti u - \EE[\Parti u]
        \bigr)
        - \prim \bigl[
        (\Parti^\prime \rho - \Parti) \partial_x u
        -
        \EE[(\Parti^\prime \rho - \Parti)\partial_x u]
        \bigr].
    \end{align*}
    Then, using Poincaré's inequality, Jensen's inequality, Proposition~\ref{Prop:Energy Dissipation} and the bound $\eqref{A Priori Estimates: Local-in-Time Bound density}$, we deduce
    \begin{align*}
        |I_5|
        \leq
        \mathcal{C} \int_0^T \bigl( 1 + ||\partial_x u(t,\cdot)||_{L^2(\TT)}^2 \bigr)\, \dd t.
    \end{align*}
    In total, our estimates for $I_1, I_2, I_3, I_4,I_5$ together with inequality $\eqref{Lemma 3: Better Bounds on the effective Pressure: Proof 5}$ imply that
    \begin{align}\label{Lemma 3: Better Bounds on the effective Pressure: Proof 6}
        &||\Sigma(T,\cdot)||_{L^2(\TT)}^2
        +
        \frac{\mu}{4M_0} ||\partial_x \Sigma||_{L^2(0,T;L^2(\TT))}^2\nonumber\\
        &\leq
        ||\Sigma_0||_{L^2(\TT)}^2
        +
        \int_0^T f(t) \, \dd t 
        +
        \int_0^T f(t) ||\Sigma(t,\cdot)||_{L^2(\TT)}^2 \, \dd t,
    \end{align}
    with
    \begin{align*}
        f(t):= \mathcal{C} \bigl( 1 + ||\partial_x u(t,\cdot)||_{L^2(\TT)}^2 \bigr).
    \end{align*}
    By virtue of the inequality $\eqref{A Priori Estimate: Local-in-Time Bound effective pressure}$, we have
    \begin{align*}
        \int_0^T f(t) \dd t 
        = \int_0^T \mathcal{C} (1 + ||\partial_x u(t,\cdot)||_{L^2(\TT)}^2 )\, \dd t
        \leq
        \mathcal{C}(1 + \Ku) T.
    \end{align*}
    Since $\Ku$ only depends on $M_0,||u_0||_{H^1(\TT)},\mu,\coup$ and $\kappa$, we find some small time $T_u \in (0,\infty)$ only depending on these quantities, such that if $\tildeT \leq  T_u$, we have for any $T \in (0,\tildeT)$
    \begin{align}\label{Lemma 3: Better Bounds on the effective Pressure: Inserted}
        \int_0^{T} f(t) \, \dd t 
        \leq
        \min\biggl\{ ||\Sigma_0||_{L^2(\TT)}^2 + 1, \ln(2) \biggr\}.
    \end{align}
    Let us assume $\tildeT<T_u$.
    Then relation $\eqref{Lemma 3: Better Bounds on the effective Pressure: Inserted}$ in combination with Gronwall's inequality yields for any $T \in (0,\tildeT)$ 
    \begin{align}\label{Lemma 3: Better Bounds on the effective Pressure: Proof 7}
        ||\Sigma(T,\cdot)||_{L^2(\TT)}^2
        &\leq
        ||\Sigma_0||_{L^2(\TT)}^2 \exp\biggl(\int_0^T f(t) \, \dd t \biggr) + \int_0^T f(t) \exp\biggl( \int_t^Tf(s) \, \dd s \biggr)\, \dd t\nonumber\\
        &\leq
        4 \Bigl( ||\Sigma_0||_{L^2(\TT)}^2 + 1 \Bigr),
    \end{align}
    so that, using $\partial_x u = \frac{\Sigma + \Parti}{\mu}$,
    \begin{align}\label{Lemma 3: Better Bounds on the effective Pressure: Proof 9}
        ||\partial_x u(T,\cdot)||_{L^2(\TT)}^2
        \leq
        \frac{2}{\mu^2} ||\Sigma(T,\cdot)||_{L^2(\TT)}^2 + \frac{2}{\mu^2} ||\Parti||_{L^2(\TT)}^2
        \leq
        \frac{8}{\mu^2} \biggl( ||\Sigma_0||_{L^2(\TT)}^2 + 1 +|\Kparti|^2 \biggr).
    \end{align}
    Using $\eqref{Lemma 3: Better Bounds on the effective Pressure: Inserted}$ and $\eqref{Lemma 3: Better Bounds on the effective Pressure: Proof 7}$ in $\eqref{Lemma 3: Better Bounds on the effective Pressure: Proof 6}$, we conclude
    \begin{equation*}
        \begin{aligned}
            \frac{\mu}{4M_0}||\partial_x \Sigma||^2_{L^2(0,\tildeT;L^2(\TT)}
            \leq
            5 \Bigl(||\Sigma_0||^2_{L^2(\TT)} + 1 \Bigr).
        \end{aligned}
    \end{equation*}
    In total we obtain the inequality
    \begin{align*}
        ||\partial_x u||_{L^\infty(0,\tildeT;L^2(\TT))}^2 
        +
        ||\partial_x \Sigma||_{L^2(0,\tildeT;L^2(\TT))}^2
        \leq
        \frac{4}{\mu^2}\biggl(
        2 + 5 M_0 \mu
        \biggr)
        \biggl(||\Sigma_0||_{L^2(\TT)}^2 + 1 + |\Kparti|^2\biggr),
    \end{align*}    
    provided $\tildeT < T_u$. This yields the claim.
\end{proof}

We finish this section with the proof of Theorem~\ref{Thm:Refined A Priori Estimates}.

\begin{proof}[Proof of Theorem~\ref{Thm:Refined A Priori Estimates}]
    Using the strong regularity of $(\rho,u,c)$ there exists some $0<\tildeT<1$, such that the inequalities $\eqref{A Priori Estimates: Local-in-Time Bound density}$ and $\eqref{A Priori Estimate: Local-in-Time Bound effective pressure}$ hold on $[0,\tildeT]$.
    We define
    \begin{align*}
        T_0 := \min\bigl( 1, T_\rho, T_u \bigr),
    \end{align*}
    where $T_\rho, T_u \in (0,\infty)$ are the positive times provided by Lemma~\ref{Lemma1: Refined Velocity and Density Bound} and by Lemma~\ref{Lemma 3: Better Bound on the Effective Pressure}, respectively.
    First, we notice that Proposition~\ref{Prop:Energy Dissipation} yields
    \begin{align*}
        ||u||_{L^\infty(0,\tildeT;L^2(\TT))}^2 \leq 4M_0 E_0.
    \end{align*}
    Hence, by virtue of the bounds $\eqref{A Priori Estimates: Local-in-Time Bound density}$ and $\eqref{A Priori Estimate: Local-in-Time Bound effective pressure}$, we are done if we conclude $T_0 \leq \tildeT$. Let us suppose the contrary, i.e. $T_0 > \tildeT$. Then, we may assume without loss of generality that
    \begin{align}\label{Thm:Refined A Priori Estimates : Proof 1}
        \tildeT= \sup\{ T \in [0,T_0] \mid \eqref{A Priori Estimates: Local-in-Time Bound density}, \eqref{A Priori Estimate: Local-in-Time Bound effective pressure} \text{ hold on } [0,T]\} < T_0.
    \end{align}
    Since $\tildeT<T_0$, we have by Lemma~\ref{Lemma1: Refined Velocity and Density Bound} and by Lemma~\ref{Lemma 3: Better Bound on the Effective Pressure} that the sharper bounds $\eqref{Lemma 1: Refined Velocity and Density Bound: Result 1}$ and $\eqref{Lemma 3: Better Bound on the Effective Pressure:Result1}$ hold on $[0,\tildeT]$. Then, by continuity, we find some small $\varepsilon>0$, such that the bounds $\eqref{A Priori Estimates: Local-in-Time Bound density}$ and $\eqref{A Priori Estimate: Local-in-Time Bound effective pressure}$ hold on $[0,\tildeT + \varepsilon]$. This contradicts $\eqref{Thm:Refined A Priori Estimates : Proof 1}$.
\end{proof}


\section{Homogenization}\label{Homogenization}
In this section we perform the homogenization procedure and investigate the propagation of initial density oscillations for the system $\eqref{NSK Elliptic 1}$ with the usage of parametrized measures as in \cite{Hillairet_New_Physical}.
To do so, let us fix initial data $\rho^0_n,u^0_n \in H^1(\TT)$ for $n \in \NN$ satisfying the uniform bounds
\begin{align*}
    M_0^{-1} \leq \rho^0_n(x) \leq M_0, \quad \forall x \in \TT, \quad \forall n \in \NN
\end{align*}
and
\begin{align*}
    \sup\limits_{n\in\NN} ||u_n^0||_{H^1(\TT)}< \infty,
\end{align*}
where $M_0$ denotes some positive constant.
This setting allows for strong oscillations in the initial densities, for instance, we could choose the initial densities as
\begin{align*}
    \rho^0_n(x):= \rho^0(nx)
\end{align*}
for some density profile $\rho^0\in H^1(\TT)$ bounded from below by some positive constant.
In accordance with the hypotheses of Theorem~\ref{Thm:Main Result}, we may use the Banach--Alaoglu theorem, to obtain some $u \in H^1(\TT)$, such that after passing to a subsequence
\begin{align*}
    u^0_n \weak u_0 \quad \text{in } H^1(\TT).
\end{align*}
Then, according to the well-posedness result Theorem~\ref{Thm:Global Existence and Uniqueness}, we construct for $n \in \NN$ the global-in-time strong solution $(\rho_n, u_n,c_n)$ of $\eqref{NSK Elliptic 1},\eqref{NSK Elliptic initial condition}$ with initial data $(\rho_n^0,u_n^0)$.
Let us denote as in Section~\ref{Refined A-Priori Estimates} the corresponding effective viscous flux as
\begin{align*}
    \Sigma_n := \mu \partial_x u_n + \Parti(\rho_n) \quad \forall \, n \in \NN.
\end{align*}
The a-priori bounds provided by Proposition~\ref{Prop:Energy Dissipation}, Proposition~\ref{prop:square control on density} and Theorem~\ref{Thm:Refined A Priori Estimates} then imply the following uniform bounds and convergences.

\begin{lemma}\label{lem:Uniform Bound on the Solutions}
Let the hypotheses of Theorem~\ref{Thm:Main Result} hold true.\\
Then there exists some $T_0>0$, such that we have the following uniform bounds:
\begin{enumerate}[1.]
    \item \label{unif_bound_rho} $\rho_n, \frac{1}{\rho_n}, \Parti(\rho_n)$ are uniformly bounded in $L^\infty(0,T_0;L^\infty(\TT))$, in particular
    \begin{align*}
        (2M_0)^{-1} \leq \rho_n(t,x) \leq 2M_0 \quad \forall \, (t,x) \in [0,T_0] \times \TT, \quad \forall \, n \in \NN,
    \end{align*}
    \item \label{unif_bound_u} $u_n$ is uniformly bounded in $L^\infty(0,T_0;H^1(\TT))$,
    \item \label{unif_bound_partial_u} $\partial_x u_n$ is uniformly bounded in $L^2(0,T_0;L^\infty(\TT))$,
    \item \label{unif_bound_sigma} $\Sigma_n$ is uniformly bounded in $L^2(0,T_0;H^1(\TT))$,
    \item \label{unif_bound_c} $c_n$ is uniformly bounded in $L^\infty(0,T_0;H^2(\TT))$.
\end{enumerate}
Moreover, there exists $\rho, \overline{P}\in L^\infty(0,T_0;L^\infty(\TT))$, $u \in L^\infty(0,T_0;H^1(\TT))$ with $\partial_x u \in L^2(0,T_0;L^\infty(\TT))$, $\Sigma \in L^2(0,T_0;H^1(\TT))$ and $c \in L^\infty(0,T_0;H^2(\TT))$, such that, after passing to a subsequence, we have the following convergences:
    \begin{enumerate}[1.]
    \setcounter{enumi}{5}
        \item \label{conv_rho_pressure} $\rho_n,\Parti(\rho_n) \weakstar \rho,\overline{P}$ in $L^\infty(0,T_0;L^\infty(\TT))$,
        \item  \label{conv_u} $u_n \weakstar u$ in $L^\infty(0,T_0;H^1(\TT))$ and $\partial_x u \in L^2(0,T_0;L^\infty(\TT))$,
        \item \label{conv_sigma} $\Sigma_n \weak \Sigma$ in $L^2(0,T_0;H^1(\TT))$,
        \item \label{conv_c} $c_n \weakstar c$ in $L^\infty(0,T_0;H^2(\TT))$.
    \end{enumerate}
Furthermore, we have
\begin{align}\label{Prop:Compactness Quantities : Result 1}
        (2M_0)^{-1} \leq \rho(t,x) \leq 2M_0 \quad \forall \, (t,x) \in [0,T_0] \times \TT,
    \end{align}
    and
    \begin{align}\label{Prop:Compactness Quantities : Result 2}
        ||u||_{L^\infty(0,T_0;H^1(\TT))}^2 
        +
        ||\Sigma||_{L^2(0,T_0;H^1(\TT))}^2 
        \leq
        C_1,
    \end{align}
    where $C_1$ is a positive constant that only depends on $M_0, ||u_0||_{H^1(\TT)},\mu, \coup, \kappa$.
\end{lemma}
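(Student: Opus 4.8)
This lemma repackages the a-priori estimates of Proposition~\ref{Prop:Energy Dissipation}, Proposition~\ref{prop:square control on density} and Theorem~\ref{Thm:Refined A Priori Estimates} in a form uniform in $n$ and then extracts limits by compactness, so the proof is largely bookkeeping. The substantive point — and the step I expect to cost the most care — is the uniformity in $n$: the existence time and all constants delivered by Theorem~\ref{Thm:Refined A Priori Estimates} (and by Lemma~\ref{Lemma1: Refined Velocity and Density Bound}, Lemma~\ref{Lemma 3: Better Bound on the Effective Pressure}) depend on the initial data only through $M_0$ and $||u_n^0||_{H^1(\TT)}$, monotonically increasingly in the latter, as one reads off from the definitions of $E_0$, $\Ku$, $\Kd$, the elementary bound $||\Sigma_n^0||_{L^2(\TT)}\le \mu||u_n^0||_{H^1(\TT)}+\Kparti$, and the explicit form of the times $T_\rho$, $T_u$. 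Replacing $||u_n^0||_{H^1(\TT)}$ by $\sup_n||u_n^0||_{H^1(\TT)}<\infty$ therefore produces a single time $T_0>0$ and single constants, depending only on $M_0$, $\sup_n||u_n^0||_{H^1(\TT)}$, $\mu$, $\coup$, $\kappa$, valid simultaneously for all $(\rho_n,u_n,c_n)$ on $[0,T_0]$.

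Granting this, I would read off items~\ref{unif_bound_rho}--\ref{unif_bound_partial_u} as follows: item~\ref{unif_bound_rho} is $\eqref{Thm:Refined A Priori Estimate:Result1}$ together with $1/\rho_n\le 2M_0$ and the boundedness of $\Parti$ on $[(2M_0)^{-1},2M_0]$; item~\ref{unif_bound_u} is the bound on $||u_n||_{L^\infty(0,T_0;H^1(\TT))}$ contained in $\eqref{Thm:Refined A Priori Estimate:Result2}$; item~\ref{unif_bound_c} follows from Lemma~\ref{lem:elliptic regularity} together with item~\ref{unif_bound_rho} (alternatively from Proposition~\ref{prop:square control on density}, whose constant $E_1$ is uniform); for item~\ref{unif_bound_sigma} I would note that $\Sigma_n=\mu\partial_x u_n+\Parti(\rho_n)$ is uniformly bounded in $L^\infty(0,T_0;L^2(\TT))$ by items~\ref{unif_bound_rho}--\ref{unif_bound_u}, which with the $\partial_x\Sigma_n$-bound of $\eqref{Thm:Refined A Priori Estimate:Result2}$ gives the uniform $L^2(0,T_0;H^1(\TT))$-bound; and item~\ref{unif_bound_partial_u} then follows from $\partial_x u_n=\mu^{-1}(\Sigma_n-\Parti(\rho_n))$, item~\ref{unif_bound_sigma} and the embedding $H^1(\TT)\hookrightarrow L^\infty(\TT)$.

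For the convergences I would use the Banach--Alaoglu theorem: $L^\infty(0,T_0;L^\infty(\TT))$, $L^\infty(0,T_0;H^1(\TT))$ and $L^\infty(0,T_0;H^2(\TT))$ are duals of the separable spaces $L^1((0,T_0)\times\TT)$, $L^1(0,T_0;H^{-1}(\TT))$ and $L^1(0,T_0;H^{-2}(\TT))$, and $L^2(0,T_0;H^1(\TT))$ is a separable Hilbert space, so extracting successively and passing to a common (unrelabelled) subsequence yields $\rho$, $\overline{P}$, $u$, $c$, $\Sigma$ realising the convergences of items~\ref{conv_rho_pressure}--\ref{conv_c}. The one wrinkle is item~\ref{conv_u}: since $L^2(0,T_0;L^\infty(\TT))$ is not conveniently a dual space, I would extract a weak limit of $\partial_x u_n$ in the reflexive space $L^2((0,T_0)\times\TT)$ (permissible by item~\ref{unif_bound_u}) and place it in $L^2(0,T_0;L^\infty(\TT))$ via lower semicontinuity of the convex functional $v\mapsto\int_0^{T_0}||v(t)||_{L^\infty(\TT)}^2\,\dd t$ under weak $L^2((0,T_0)\times\TT)$-convergence; this limit equals $\partial_x u$, since $u_n\weakstar u$ in $L^\infty(0,T_0;H^1(\TT))$ forces $\partial_x u_n\to\partial_x u$ in $\curlyD^\prime((0,T_0)\times\TT)$ and distributional limits are unique.

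Finally I would transfer the bounds to the limit: $\eqref{Prop:Compactness Quantities : Result 1}$ is inherited from $(2M_0)^{-1}\le\rho_n\le 2M_0$ by testing $\rho_n\weakstar\rho$ against non-negative functions in $L^1((0,T_0)\times\TT)$, and $\eqref{Prop:Compactness Quantities : Result 2}$ follows from weak(-$\ast$) lower semicontinuity of the norms of $L^\infty(0,T_0;H^1(\TT))$ and $L^2(0,T_0;H^1(\TT))$, taking $C_1:=\sup_n\bigl(||u_n||_{L^\infty(0,T_0;H^1(\TT))}^2+||\Sigma_n||_{L^2(0,T_0;H^1(\TT))}^2\bigr)$, which by the first paragraph is finite and depends only on the quantities listed in the statement.
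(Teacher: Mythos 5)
Your proof is correct and follows essentially the same approach as the paper's (terse) proof, which simply cites Propositions~\ref{Prop:Energy Dissipation} and~\ref{prop:square control on density}, Theorem~\ref{Thm:Refined A Priori Estimates}, Banach--Alaoglu, and lower semicontinuity. One small imprecision: the existence time from Theorem~\ref{Thm:Refined A Priori Estimates} \emph{decreases} (rather than increases) as $\|u_0\|_{H^1(\TT)}$ grows, but this does not affect your conclusion, which only needs the time to stay bounded below by a positive number and the constants bounded above over the fixed range $\|u_n^0\|_{H^1(\TT)}\le\sup_n\|u_n^0\|_{H^1(\TT)}$.
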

\begin{proof}
    The uniform bounds \ref{unif_bound_rho}, \ref{unif_bound_u}, \ref{unif_bound_partial_u}, \ref{unif_bound_sigma} and \ref{unif_bound_c} follow from Proposition~\ref{Prop:Energy Dissipation}, Proposition~\ref{prop:square control on density} and Theorem~\ref{Thm:Refined A Priori Estimates} by using the uniform bounds on $\rho^0_n$ and $u^0_n$ according to the hypotheses of Theorem~\ref{Thm:Main Result}.
    These bounds imply via the Banach--Alaoglu theorem, after passing to a subsequence, the convergences \ref{conv_rho_pressure}, \ref{conv_u}, \ref{conv_sigma} and \ref{conv_c}.
    The regularity $\partial_x u \in L^2(0,T_0;L^\infty(\TT))$ and the inequalities $\eqref{Prop:Compactness Quantities : Result 1}$ and $\eqref{Prop:Compactness Quantities : Result 2}$ follow from semi-continuity results on weak convergence.
\end{proof}
Using the Aubin--Lions lemma (see for instance \cite{Simon}), we obtain stronger convergence for the velocity.

\begin{lemma}[Improved Convergences]\label{lem: Improved Convergences}
    Under the hypotheses and notation of Lemma~\ref{lem:Uniform Bound on the Solutions}, we have that
    \begin{enumerate}[1.]
        \item \label{conv_strong_u} $u_n \to u$ in $C([0,T_0];C(\TT))$,
        \item \label{conv_u_square} $|u_n|^2 \to |u|$ in $L^2(0,T_0;L^2(\TT))$,
        \item \label{conv_rho_u} $\rho_n u_n \weak \rho u$ in $L^2(0,T_0;L^2(\TT))$,
        \item \label{conv_rho_u_square} $\rho_n |u_n|^2 \weak \rho |u|^2$ in $L^2(0,T_0;L^2(\TT))$. 
    \end{enumerate}
\end{lemma}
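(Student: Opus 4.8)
The plan is to reduce the whole statement to a single new a-priori bound, namely a uniform control of $\partial_t u_n$ in $L^2(0,T_0;L^2(\TT))$, feed it into the Aubin--Lions lemma to obtain~\ref{conv_strong_u}, and then derive~\ref{conv_u_square}--\ref{conv_rho_u_square} by soft arguments combining this strong convergence with the weak-$\ast$ convergence $\rho_n\weakstar\rho$ supplied by Lemma~\ref{lem:Uniform Bound on the Solutions}. First I would combine the momentum equation $\eqref{NSK Elliptic 1}_2$ with the continuity equation $\eqref{NSK Elliptic 1}_1$ and the definition of the effective viscous flux to write
\begin{align*}
    \partial_t u_n = -\,u_n\partial_x u_n + \frac{1}{\rho_n}\partial_x\Sigma_n + \coup\,\partial_x c_n \qquad\text{a.e. on }(0,T_0)\times\TT,
\end{align*}
and estimate the right-hand side in $L^2(0,T_0;L^2(\TT))$. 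By Lemma~\ref{lem:Uniform Bound on the Solutions}, $u_n$ is uniformly bounded in $L^\infty(0,T_0;L^\infty(\TT))$ (Sobolev embedding) and $\partial_x u_n$ in $L^2(0,T_0;L^\infty(\TT))$, so $u_n\partial_x u_n$ is uniformly bounded in $L^2(0,T_0;L^2(\TT))$; moreover $1/\rho_n$ is uniformly bounded in $L^\infty$ and $\partial_x\Sigma_n$ in $L^2(0,T_0;L^2(\TT))$, while $\partial_x c_n$ is uniformly bounded in $L^\infty(0,T_0;L^\infty(\TT))$ via $H^1(\TT)\hookrightarrow L^\infty(\TT)$. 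Hence $\partial_t u_n$ is uniformly bounded in $L^2(0,T_0;L^2(\TT))$.

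Since $u_n$ is uniformly bounded in $L^\infty(0,T_0;H^1(\TT))$ with $\partial_t u_n$ uniformly bounded in $L^2(0,T_0;L^2(\TT))$, and the embeddings $H^1(\TT)\hookrightarrow C(\TT)\hookrightarrow L^2(\TT)$ are respectively compact and continuous, the Aubin--Lions lemma (e.g.\ \cite{Simon}) yields a subsequence converging strongly in $C([0,T_0];C(\TT))$. By uniqueness of limits this limit is the weak-$\ast$ limit $u$, and since the sequence is relatively compact the whole (sub)sequence converges, which is~\ref{conv_strong_u}. Assertion~\ref{conv_u_square} then follows at once: $u_n$ and $u$ are uniformly bounded in $L^\infty((0,T_0)\times\TT)$ and $u_n\to u$ uniformly, so $|u_n|^2\to|u|^2$ uniformly on $[0,T_0]\times\TT$, hence in $L^2(0,T_0;L^2(\TT))$.

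For~\ref{conv_rho_u} and~\ref{conv_rho_u_square}, note that $\rho_n u_n$ and $\rho_n|u_n|^2$ are uniformly bounded in $L^\infty(0,T_0;L^\infty(\TT))\subset L^2(0,T_0;L^2(\TT))$, so it suffices to test against an arbitrary $\phi\in L^2(0,T_0;L^2(\TT))$. Writing
\begin{align*}
    \int_0^{T_0}\TTint \rho_n u_n\phi\,\dd x\,\dd t
    &= \int_0^{T_0}\TTint \rho_n(u_n-u)\phi\,\dd x\,\dd t\\
    &\quad+ \int_0^{T_0}\TTint \rho_n\,u\phi\,\dd x\,\dd t,
\end{align*}
the first term is bounded by $\bigl(\sup_n\|\rho_n\|_{L^\infty}\bigr)\,\|u_n-u\|_{L^2(0,T_0;L^2(\TT))}\,\|\phi\|_{L^2(0,T_0;L^2(\TT))}\to0$ by~\ref{conv_strong_u}, while the second converges to $\int_0^{T_0}\TTint \rho\,u\phi\,\dd x\,\dd t$ because $u\phi\in L^1((0,T_0)\times\TT)$ and $\rho_n\weakstar\rho$ in $L^\infty((0,T_0)\times\TT)$; since $L^2(0,T_0;L^2(\TT))$ is a Hilbert space this is precisely $\rho_n u_n\weak\rho u$. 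The identical computation with $|u_n|^2$ in place of $u_n$, using~\ref{conv_u_square} for the strong convergence of the squares, gives~\ref{conv_rho_u_square}.

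The hard part is the $\partial_t u_n$ estimate of the first step: it is the only place where the refined bounds of Lemma~\ref{lem:Uniform Bound on the Solutions} genuinely enter — in particular the $L^2(0,T_0;L^\infty(\TT))$ control of $\partial_x u_n$ and the $L^2(0,T_0;L^2(\TT))$ control of $\partial_x\Sigma_n$ coming from Theorem~\ref{Thm:Refined A Priori Estimates}, and the $L^\infty$-in-space-and-time control of $\partial_x c_n$ coming from the elliptic equation for $c_n$. Once this bound is secured, the remaining assertions are routine.
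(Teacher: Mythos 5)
Your proof is correct and follows essentially the same strategy as the paper: both rewrite the momentum equation as $\partial_t u_n = -u_n\partial_x u_n + \rho_n^{-1}\partial_x\Sigma_n + \coup\,\partial_x c_n$, establish a uniform $L^2(0,T_0;L^2(\TT))$ bound on $\partial_t u_n$ from Lemma~\ref{lem:Uniform Bound on the Solutions}, and invoke Aubin--Lions with the compact embedding $H^1(\TT)\hookrightarrow C(\TT)$, with the remaining convergences obtained by combining the resulting strong convergence of $u_n$ (hence of $|u_n|^2$) with the weak-$\ast$ convergence of $\rho_n$. The only cosmetic differences are which uniform bounds you cite in the $\partial_t u_n$ estimate (you use $\partial_x u_n\in L^2 L^\infty$ and $\partial_x c_n\in L^\infty L^\infty$, the paper uses the $H^1$/$H^2$ bounds directly) and that you spell out the bilinear decomposition for items~\ref{conv_rho_u}--\ref{conv_rho_u_square}, which the paper leaves implicit.
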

\begin{proof}
    With \ref{unif_bound_rho} in Lemma~\ref{lem:Uniform Bound on the Solutions}, we may rewrite the momentum equation $\eqref{NSK Elliptic 1}_2$ using the continuity equation $\eqref{NSK Elliptic 1}_1$ as
    \begin{align*}
        \partial_t u_n = -u_n\partial_x u_n + \frac{1}{\rho_n} \partial_x \Sigma_n + \coup \partial_x c_n .
    \end{align*}
    This yields
    \begin{align*}
        &||\partial_t u_n||_{L^2(0,T_0;L^2(\TT))}\\
        &\leq
        ||u_n||_{L^\infty(0,T_0;L^\infty(\TT))}||u_n||_{L^2(0,T_0;H^1(\TT))}
        +
        \biggl|\biggl|\frac{1}{\rho_n}\biggr|\biggr|_{L^\infty(0,T_0;L^\infty(\TT))} ||\Sigma_n||_{L^2(0,T_0;H^1(\TT))}\\
        &\qquad+
        \coup
        ||c_n||_{L^\infty(0,T_0;H^2(\TT))}.
    \end{align*}
    By virtue of the uniform bounds in Lemma~\ref{lem:Uniform Bound on the Solutions} the above inequality implies that $\partial_t u_n$ is uniformly bounded in $L^2(0,T_0;L^2(\TT))$. Together with \ref{unif_bound_u} in Lemma~\ref{lem:Uniform Bound on the Solutions} and the fact that the Sobolev embedding $H^1(\TT) \hookrightarrow C(\TT)$ is compact, we may apply the Aubin--Lion lemma (see for instance \cite{Simon}) to conclude that, after passing to a subsequence,
    \begin{align}\label{convergence subsequence}
        u_n \to u \quad \text{in } C([0,T_0];C(\TT)).
    \end{align}
    Since we already know the convergence \ref{conv_u} in Lemma~\ref{lem:Uniform Bound on the Solutions} for the whole sequence, we conclude that $\eqref{convergence subsequence}$ holds in fact without passing to a subsequence.
    The convergence \ref{conv_strong_u} implies clearly \ref{conv_u_square}. 
    Combining the strong convergence of $u_n$ and $|u_n|^2$ with the weak convergence of $\rho_n$ yields the convergences \ref{conv_rho_u} and \ref{conv_rho_u_square}.
\end{proof}

Before passing to the limit in system $\eqref{NSK Elliptic 1}$, we need one more compactness result concerning the effective viscous flux, that was one of the key ingredients in the existence proof of global finite energy weak solutions (\cite{Lions}, \cite{Feireisl}). This compactness result was transferred to the one-dimensional periodic framework in \cite{Hillairet_New_Physical}.

\begin{lemma}\label{lem: Compensated Compactness}
Let the hypotheses and notation of Lemma~\ref{lem:Uniform Bound on the Solutions} hold true and let $\beta \in C^1(\RR)$.
Then there exists a function $\overline{\beta} \in L^\infty(0,T_0;L^\infty(\TT))$ such that, after passing to a subsequence, we have
    \begin{enumerate}[1.]
        \item \label{conv_beta} $\beta(\rho_n) \weakstar \overline{\beta}$ in $L^\infty(0,T_0;L^\infty(\TT))$,
        \item \label{conv_beta_sigma}$\beta(\rho_n)\Sigma_n \weak \overline{\beta} \Sigma$ in $L^2(0,T_0;L^2(\TT))$,
        \item \label{conv_beta_c}$\beta(\rho_n) \partial_x c_n \weak \overline{\beta} \partial_x c$ in $L^2(0,T_0;L^2(\TT))$. 
    \end{enumerate}
\end{lemma}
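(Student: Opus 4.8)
The plan is to settle \ref{conv_beta} by a soft compactness argument and then to reduce both \ref{conv_beta_sigma} and \ref{conv_beta_c} to a single compensated--compactness (div--curl) argument, namely the lemma of \cite{Hillairet_New_Physical} referred to in the introduction. For \ref{conv_beta} I would only observe that $\beta \in C^1(\RR)$ and, by item~\ref{unif_bound_rho} of Lemma~\ref{lem:Uniform Bound on the Solutions}, $\rho_n$ takes values in the fixed compact interval $[(2M_0)^{-1},2M_0]$ for every $n$, so that $\|\beta(\rho_n)\|_{L^\infty(0,T_0;L^\infty(\TT))} \le \max_{\lambda\in[(2M_0)^{-1},2M_0]}|\beta(\lambda)|$; since $L^\infty((0,T_0)\times\TT)$ is the dual of the separable space $L^1((0,T_0)\times\TT)$, Banach--Alaoglu yields a further subsequence and an $\overline{\beta}\in L^\infty(0,T_0;L^\infty(\TT))$ with $\beta(\rho_n)\weakstar\overline{\beta}$. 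The same bound, combined with item~\ref{unif_bound_sigma} (resp.\ item~\ref{unif_bound_c}), shows that $\beta(\rho_n)\Sigma_n$ (resp.\ $\beta(\rho_n)\partial_x c_n$) is bounded in $L^2(0,T_0;L^2(\TT))$; consequently it will suffice to identify the weak limits of these products in $\curlyD^\prime((0,T_0)\times\TT)$, as such a limit is then automatically their weak $L^2$-limit.

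For the core step I would exploit a commutator structure. Since $(\rho_n,u_n,c_n)$ is a strong solution, $\rho_n$ satisfies the continuity equation in the renormalized sense (cf.\ \eqref{Lemma 3: Better Bounds on the effective Pressure: Proof 1}), i.e.
\[
\partial_t\beta(\rho_n) + \partial_x\bigl(\beta(\rho_n)u_n\bigr) = \bigl(\beta^\prime(\rho_n)\rho_n - \beta(\rho_n)\bigr)\partial_x u_n \quad \text{a.e. on } (0,T_0)\times\TT.
\]
Viewing $(t,x)$ as Euclidean coordinates on the cylinder $\Omega:=(0,T_0)\times\TT$, I would set $E_n:=\bigl(\beta(\rho_n),\,\beta(\rho_n)u_n\bigr)$; by item~\ref{unif_bound_u} of Lemma~\ref{lem:Uniform Bound on the Solutions} (and $H^1(\TT)\hookrightarrow L^\infty(\TT)$) together with the $L^\infty$-bound above, $E_n$ is bounded in $L^2(\Omega;\RR^2)$, while $\mathrm{div}_{t,x}E_n$ equals the right-hand side of the displayed equation, which by items~\ref{unif_bound_rho} and \ref{unif_bound_partial_u} is bounded in $L^2(\Omega)$ and hence precompact in $H^{-1}(\Omega)$ by Rellich. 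For the complementary field I would take $F_n:=(\Sigma_n,0)$ to treat \ref{conv_beta_sigma}: it is bounded in $L^2(\Omega;\RR^2)$ by item~\ref{unif_bound_sigma}, and $\mathrm{curl}_{t,x}F_n = -\partial_x\Sigma_n$ is bounded in $L^2(\Omega)$, hence precompact in $H^{-1}(\Omega)$. To treat \ref{conv_beta_c} I would take instead $F_n:=(\partial_x c_n,0)$; then item~\ref{unif_bound_c} bounds $\partial_x c_n$ in $L^\infty(0,T_0;H^1(\TT))$, so $F_n$ is bounded in $L^2(\Omega;\RR^2)$ while $\mathrm{curl}_{t,x}F_n = -\partial_{xx}c_n$ is bounded in $L^\infty(0,T_0;L^2(\TT))\subset L^2(\Omega)$, again precompact in $H^{-1}(\Omega)$.

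Applying the periodic div--curl lemma of \cite{Hillairet_New_Physical} to the pair $(E_n,F_n)$ then gives $E_n\cdot F_n \weak E\cdot F$ in $\curlyD^\prime(\Omega)$, where $E,F$ are the weak $L^2$-limits. Here $E=(\overline{\beta},\,\overline{\beta}\,u)$: indeed $\beta(\rho_n)u_n\weakstar\overline{\beta}\,u$ because $\beta(\rho_n)\weakstar\overline{\beta}$ in $L^\infty(\Omega)$ while, by Lemma~\ref{lem: Improved Convergences}, item~\ref{conv_strong_u}, $u_n\to u$ \emph{strongly} in $C([0,T_0];C(\TT))$. For \ref{conv_beta_sigma} one has $F=(\Sigma,0)$ by item~\ref{conv_sigma}, so $E_n\cdot F_n = \beta(\rho_n)\Sigma_n \weak \overline{\beta}\,\Sigma$; for \ref{conv_beta_c} one has $F=(\partial_x c,0)$ by item~\ref{conv_c}, so $E_n\cdot F_n = \beta(\rho_n)\partial_x c_n \weak \overline{\beta}\,\partial_x c$. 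Since these products are bounded in $L^2(0,T_0;L^2(\TT))$ by the first paragraph, the $\curlyD^\prime$-convergences upgrade to the asserted weak $L^2(0,T_0;L^2(\TT))$-convergences.

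I expect the main obstacle to be assertion \ref{conv_beta_c}, which is also the point where the argument genuinely departs from \cite{Hillairet_New_Physical}: to place $\partial_x c_n$ in the curl-controlled slot of the div--curl lemma one needs $\partial_{xx}c_n$ bounded in $L^2$, i.e.\ a uniform $H^2$-bound on $c_n$, and this is exactly what the elliptic regularity estimate of Lemma~\ref{lem:elliptic regularity}, propagated to item~\ref{unif_bound_c} of Lemma~\ref{lem:Uniform Bound on the Solutions}, provides; this is the precise sense in which the a-priori control of the order parameter must be ``strong enough''. A secondary technical caveat is that the div--curl lemma is invoked on the cylinder $\Omega$ rather than on a Euclidean domain, but since its conclusion is local (convergence in $\curlyD^\prime$) this is harmless: one either localizes via coordinate charts or quotes directly the periodic formulation established in \cite{Hillairet_New_Physical}.
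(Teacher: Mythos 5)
Your proposal is correct and follows essentially the same approach as the paper: the paper's proof simply cites Lemma~10 of \cite{Hillairet_New_Physical} (a div--curl/compensated-compactness argument of precisely the form you carry out) for assertions~\ref{conv_beta} and~\ref{conv_beta_sigma}, and then observes that the uniform $L^2(0,T_0;H^1(\TT))$-bound on $\partial_x c_n$ lets the same argument handle~\ref{conv_beta_c}. Your explicit choice of fields $E_n=(\beta(\rho_n),\beta(\rho_n)u_n)$ and $F_n=(\Sigma_n,0)$ or $(\partial_x c_n,0)$, with the renormalized continuity equation supplying the divergence bound and the $H^1$-bounds on $\Sigma_n$, $\partial_x c_n$ supplying the curl bounds, is exactly the content of the cited lemma and the paper's remark.
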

\begin{proof}
    For the proof of \ref{conv_beta} and \ref{conv_beta_sigma} we refer to Lemma 10 in \cite{Hillairet_New_Physical}. 
    As $\partial_x c_n$ is uniformly bounded in $L^2(0,T_0;H^1(\TT))$, the proof for \ref{conv_beta_sigma} also applies for \ref{conv_beta_c}.
\end{proof}

Finally, we pass to the limit in system $\eqref{NSK Elliptic 1}$. We obtain the following system for the limit quantities $(\rho,u,c)$:

\begin{proposition}[Limit system]\label{Prop:Limit System}
    Under the hypotheses and notation of Lemma~\ref{lem:Uniform Bound on the Solutions}, we have that $(\rho,u,c)$ solves the system
    \begin{alignat}{2}\label{Prop:Limit System : Result 1}
    \left\{
        \begin{aligned}
            \partial_t \rho + \partial_x(\rho u) &= 0,\\
            \partial_t (\rho u) + \partial_x(\rho u^2)
            - \partial_x \Sigma - \coup \rho \partial_x c
            &= 0,\\
            -\kappa \partial_{xx} c + \coup (c-\rho) &= 0
        \end{aligned}
    \right.
    \end{alignat}
    in $\curlyD^\prime((0,T_0)\times\TT)$, where
    \begin{align}\label{Prop:Limit System : Result 2}
        \Sigma = \mu \partial_x u - \overline{P} \quad \text{a.e. in } (0,T_0) \times \TT.
    \end{align}
    Moreover, we have
    \begin{align}\label{Prop:Limit System : Result 3}
        u(0,\cdot) = u_0 \quad \text{a.e. in } \TT.
    \end{align}
\end{proposition}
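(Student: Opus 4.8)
The plan is to pass to the limit $n\to\infty$ in the weak formulation of each equation of the $n$-th system $\eqref{NSK Elliptic 1}$, exploiting the uniform bounds and convergences collected in Lemma~\ref{lem:Uniform Bound on the Solutions}, Lemma~\ref{lem: Improved Convergences} and Lemma~\ref{lem: Compensated Compactness}. Throughout, $\phi \in \curlyD((0,T_0)\times\TT)$ denotes an arbitrary test function, and I use that, by $\eqref{rewritten momentum}$, the momentum equation $\eqref{NSK Elliptic 1}_2$ for $(\rho_n,u_n,c_n)$ is equivalent to $\partial_t(\rho_n u_n) + \partial_x(\rho_n u_n^2) - \partial_x\Sigma_n - \coup\rho_n\partial_x c_n = 0$, with $\Sigma_n = \mu\partial_x u_n - \Parti(\rho_n)$ (cf. Section~\ref{Refined A-Priori Estimates}).

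First I would dispose of the two equations that pass to the limit by soft arguments. Testing the continuity equation $\eqref{NSK Elliptic 1}_1$ against $\phi$ gives $\int_0^{T_0}\TTint \rho_n\,\partial_t\phi + \rho_n u_n\,\partial_x\phi\,\dd x\,\dd t = 0$; since $\rho_n\weakstar\rho$ in $L^\infty(0,T_0;L^\infty(\TT))$ with $\partial_t\phi\in L^1$, while $\rho_n u_n \weak \rho u$ in $L^2(0,T_0;L^2(\TT))$ by Lemma~\ref{lem: Improved Convergences}, both terms converge and $\partial_t\rho + \partial_x(\rho u) = 0$ in $\curlyD^\prime((0,T_0)\times\TT)$. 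The elliptic equation $\eqref{NSK Elliptic 1}_3$ is linear, so $c_n\weakstar c$ in $L^\infty(0,T_0;H^2(\TT))$ (hence $\partial_{xx}c_n\weakstar\partial_{xx}c$) together with $\rho_n\weakstar\rho$ immediately yield $-\kappa\partial_{xx}c + \coup(c-\rho) = 0$ in $\curlyD^\prime$.

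The momentum equation is the step where the compactness lemmas are indispensable. In weak form it reads $-\int_0^{T_0}\TTint \rho_n u_n\,\partial_t\phi\,\dd x\,\dd t - \int_0^{T_0}\TTint \rho_n u_n^2\,\partial_x\phi\,\dd x\,\dd t + \int_0^{T_0}\TTint \Sigma_n\,\partial_x\phi\,\dd x\,\dd t - \coup\int_0^{T_0}\TTint \rho_n\partial_x c_n\,\phi\,\dd x\,\dd t = 0$. Here $\rho_n u_n\weak\rho u$ and $\rho_n u_n^2\weak\rho u^2$ in $L^2(0,T_0;L^2(\TT))$ by Lemma~\ref{lem: Improved Convergences}, $\Sigma_n\weak\Sigma$ in $L^2(0,T_0;H^1(\TT))$ by Lemma~\ref{lem:Uniform Bound on the Solutions}, and — this being the only genuinely nonlinear passage, since $\rho_n\partial_x c_n$ is a product of two merely weakly convergent sequences — Lemma~\ref{lem: Compensated Compactness} applied with $\beta = \mathrm{id}$ (so that $\overline{\beta}=\rho$, the weak-$\ast$ limit of $\rho_n$) gives $\rho_n\partial_x c_n \weak \rho\,\partial_x c$ in $L^2(0,T_0;L^2(\TT))$. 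Letting $n\to\infty$ term by term yields $\partial_t(\rho u) + \partial_x(\rho u^2) - \partial_x\Sigma - \coup\rho\partial_x c = 0$ in $\curlyD^\prime$. The identity $\eqref{Prop:Limit System : Result 2}$ follows by passing to the limit in the definition $\Sigma_n = \mu\partial_x u_n - \Parti(\rho_n)$: from $u_n\weakstar u$ in $L^\infty(0,T_0;H^1(\TT))$ one gets $\partial_x u_n\weak\partial_x u$ in $L^2(0,T_0;L^2(\TT))$, and $\Parti(\rho_n)\weakstar\overline{P}$ in $L^\infty(0,T_0;L^\infty(\TT))$ by Lemma~\ref{lem:Uniform Bound on the Solutions}, so uniqueness of weak limits forces $\Sigma = \mu\partial_x u - \overline{P}$ a.e. in $(0,T_0)\times\TT$.

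It remains to verify the initial condition $\eqref{Prop:Limit System : Result 3}$. Since $u_n\to u$ in $C([0,T_0];C(\TT))$ by Lemma~\ref{lem: Improved Convergences}, in particular $u_n(0,\cdot)\to u(0,\cdot)$ in $C(\TT)$; on the other hand $u_n(0,\cdot)=u_n^0$ and $u_n^0\weak u_0$ in $H^1(\TT)$, which by the compact embedding $H^1(\TT)\hookrightarrow C(\TT)$ (via a subsequence argument pinning down the limit) gives $u_n^0\to u_0$ in $C(\TT)$, whence $u(0,\cdot)=u_0$. The only real obstacle in the whole argument is the identification of the weak limits of the nonlinear terms $\rho_n u_n^2$ and $\rho_n\partial_x c_n$; once these are furnished by Lemma~\ref{lem: Improved Convergences} (via Aubin--Lions) and Lemma~\ref{lem: Compensated Compactness} (via compensated compactness), the proof of the proposition itself is routine bookkeeping of weak convergences.
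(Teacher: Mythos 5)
Your proposal is correct and takes essentially the same route as the paper's own proof: pass to the limit in the weak formulation of each equation using Lemma~\ref{lem:Uniform Bound on the Solutions}, Lemma~\ref{lem: Improved Convergences}, and Lemma~\ref{lem: Compensated Compactness} with $\beta=\mathrm{id}$ to identify the only genuinely nonlinear term $\rho_n\partial_x c_n$, then obtain $\eqref{Prop:Limit System : Result 2}$ from weak convergence of both sides of $\Sigma_n = \mu\partial_xu_n - \Parti(\rho_n)$, and $\eqref{Prop:Limit System : Result 3}$ from the uniform convergence of $u_n$. The paper states the proof as a bare citation to these lemmas; you have merely written out the routine bookkeeping they leave implicit.
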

\begin{proof}
    The fact that $(\rho,u,c)$ solves $\eqref{Prop:Limit System : Result 1}$ in $\mathcal{D}^\prime((0,T_0) \times \TT)$ follows from Lemma~\ref{lem:Uniform Bound on the Solutions}, Lemma~\ref{lem: Improved Convergences} and Lemma~\ref{lem: Compensated Compactness} with $\beta = \mathrm{Id}_\RR$.
    In particular, Lemma~\ref{lem:Uniform Bound on the Solutions} implies that both sides of
    \begin{align*}
        \Sigma_n = \mu \partial_x u_n - \Parti(\rho_n)
    \end{align*}
    converge weakly in $L^2(0,T_0;L^2(\TT))$ to their corresponding limits. This implies $\eqref{Prop:Limit System : Result 2}$.
    Assertion ~\eqref{Prop:Limit System : Result 3} follows from the convergence \ref{conv_strong_u} in Lemma~\ref{lem: Improved Convergences}.
\end{proof}

We notice that we have obtained an unclosed quantity $\overline{P}$. Due to the strong oscillations in the density, we cannot expect any strong convergence of the density. In particular, we cannot hope for a relation $\overline{P} = \Parti(\rho)$ due to the nonlinear nature of the pressure function $\Parti$. In fact, this is not what we want to achieve, since this would imply that all oscillations in the density sequence $(\rho_n)_{n\in\NN}$ would have been disappeared.
In order to close the system mathematically, we interpret the density sequence $(\rho_n)_{n\in\NN}$ as a sequence of parametrized measures $(\Theta_n)_{n\in\NN}$ and investigate the limit of this sequence.
More precisely, we define for each $t \in [0,T_0]$ the map
\begin{align}\label{defi:measure:n}
    C^0_c((0,T_0)\times \TT) \to \RR,\quad 
    \langle \Theta_n(t), \varphi \rangle 
    := \TTint \varphi(x,\rho_n(t,x)) \,\dd x
\end{align}
It is not hard to see, that for each $t \in [0,T_0]$ we have $\Theta_n(t) \in \curlyM^+(\TT\times \RR)$.
In the following proposition, we collect some properties of this map, including the fact that we can extract a converging subsequence of $(\Theta_n)_{n\in\NN}$ and identify a kinetic equation for the limit measure. 
This equation can be seen as a closure for the system $\eqref{Prop:Limit System : Result 1}$.

\begin{proposition}[Properties of $\Theta_n$]\label{prop: Properties of Theta(n)}
    Under the hypotheses and notation of Lemma~\ref{lem:Uniform Bound on the Solutions}, let $\Theta_n$ be defined via $\eqref{defi:measure:n}$ for $n \in \NN$.\\
    Then we have $\Theta_n \in \Cw([0,T_0];\curlyM^+(\TT \times \RR))$. 
    Furthermore, we have for any $t \in [0,T_0]$
    \begin{align}\label{prop: Properties of Theta(n) : Result 1}
        \spt\, \bigl(\Theta_n(t)\bigr) \subseteq \TT \times \bigl[(2M_0)^{-1}, 2M_0 \bigr]
    \end{align}
    and
    \begin{align}\label{prop: Properties of Theta(n) : Result 2}
        ||\Theta_n(t)||_{\curlyM^+(\TT \times \RR)} \leq 1.
    \end{align}
    Moreover, there exists some $\Theta \in \Cw([0,T_0];\curlyM^+(\RR \times \TT))$, such that, after passing to a subsequence,
    \begin{align}\label{prop: Properties of Theta(n) : Result 3}
        \Theta_n \to \Theta \quad \text{in } \Cw([0,T_0];\curlyM^+(\TT \times \RR)),
    \end{align}
    that is,
    \begin{align*}
        \sup\limits_{t \in [0,T_0]}\bigl|\langle \Theta_n(t)- \Theta(t), \varphi \rangle \bigr| \to 0
        \quad \forall \, \varphi \in C^0_0(\TT \times \RR).
    \end{align*}
    Furthermore, $\Theta$ satisfies
    \begin{align}\label{prop: Properties of Theta(n) : Result 4}
        \spt\,\bigl(\Theta(t)\bigr) \subseteq \TT \times \bigl[(2M_0)^{-1}, 2M_0\bigr]
    \end{align}
    and
    \begin{align}\label{prop: Properties of Theta(n) : Result 5}
        \partial_t \Theta 
        +
        \partial_x(\Theta u)
        -
        \frac{1}{\mu}\partial_\xi \biggl( [\xi \Sigma + \xi \Parti(\xi)] \Theta \biggr)
        -
        \frac{1}{\mu}\biggl( [\Sigma + \Parti(\xi)] \Theta \biggr)
        =
        0
    \end{align}
    in $\curlyD^\prime((0,T_0) \times \TT \times \RR)$.
\end{proposition}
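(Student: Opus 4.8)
The plan is: (i) record the elementary bounds; (ii) identify the kinetic equation satisfied by each $\Theta_n$; (iii) extract the limit $\Theta$ by an Arzel\`a--Ascoli argument in time; (iv) pass to the limit in that equation, the only nontrivial term being supplied by Lemma~\ref{lem: Compensated Compactness}. For (i): since $\rho_n\in C([0,T_0];H^1(\TT))\hookrightarrow C([0,T_0];C(\TT))$ and every $\varphi\in C^0_0(\TT\times\RR)$ is uniformly continuous, $t\mapsto\TTint\varphi(x,\rho_n(t,x))\,\dd x$ is continuous, and $\Theta_n(t)$ is evidently a nonnegative functional, so $\Theta_n\in\Cw([0,T_0];\curlyM^+(\TT\times\RR))$. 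Since $|\TT|=1$ we get $|\langle\Theta_n(t),\varphi\rangle|\le\|\varphi\|_{L^\infty}$, which is $\eqref{prop: Properties of Theta(n) : Result 2}$; and item~\ref{unif_bound_rho} of Lemma~\ref{lem:Uniform Bound on the Solutions} confines $\rho_n(t,x)$ to $[(2M_0)^{-1},2M_0]$, forcing $\langle\Theta_n(t),\varphi\rangle=0$ whenever $\spt(\varphi)$ misses $\TT\times[(2M_0)^{-1},2M_0]$, which is $\eqref{prop: Properties of Theta(n) : Result 1}$.

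For (ii), fix $\psi\in\curlyD((0,T_0)\times\TT\times\RR)$. For a.e.\ $x$ the map $t\mapsto\rho_n(t,x)$ is absolutely continuous (because $\partial_t\rho_n\in C([0,T_0];L^2(\TT))$), so $t\mapsto\psi(t,x,\rho_n(t,x))$ is compactly supported and absolutely continuous in $t$; integrating $\frac{\dd}{\dd t}\bigl[\psi(t,x,\rho_n(t,x))\bigr]$ in $t$, then using $\partial_t\rho_n=-\partial_x(\rho_nu_n)$, integrating by parts in $x$, and inserting $\mu\partial_xu_n=\Sigma_n+\Parti(\rho_n)$ (recall $\Sigma_n=\mu\partial_xu_n-\Parti(\rho_n)$), a direct computation shows that $\Theta_n$ solves, in $\curlyD'((0,T_0)\times\TT\times\RR)$,
\begin{align*}
    \partial_t\Theta_n+\partial_x(\Theta_nu_n)-\frac1\mu\partial_\xi\bigl([\xi\Sigma_n+\xi\Parti(\xi)]\Theta_n\bigr)-\frac1\mu\bigl([\Sigma_n+\Parti(\xi)]\Theta_n\bigr)=0,
\end{align*}
the pairings being legitimate since $\Sigma_n(t,\cdot)\in H^1(\TT)\hookrightarrow C(\TT)$ for a.e.\ $t$ and $u_n(t,\cdot)\in C(\TT)$.

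For (iii), test the previous display with $\psi(t,x,\xi)=\chi(t)\varphi(x,\xi)$, $\chi\in\curlyD(0,T_0)$ and $\varphi\in C^\infty(\TT\times\RR)$ compactly supported in $\xi$: the weak time derivative of $t\mapsto\langle\Theta_n(t),\varphi\rangle$ is then pointwise dominated by $C(\varphi)\bigl(1+\|\partial_xu_n(t)\|_{L^\infty(\TT)}\bigr)$, using $\|\Theta_n(t)\|\le1$, the uniform bound on $\|u_n\|_{L^\infty(0,T_0;H^1(\TT))}$, and $\|\Sigma_n(t)\|_{L^\infty(\TT)}\le\mu\|\partial_xu_n(t)\|_{L^\infty(\TT)}+\Kparti$. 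Since $\partial_xu_n$ is uniformly bounded in $L^2(0,T_0;L^\infty(\TT))$ (item~\ref{unif_bound_partial_u} of Lemma~\ref{lem:Uniform Bound on the Solutions}) and $T_0\le1$, Cauchy--Schwarz yields the uniform estimate $|\langle\Theta_n(t),\varphi\rangle-\langle\Theta_n(s),\varphi\rangle|\le C(\varphi)|t-s|^{1/2}$. Combined with $\|\Theta_n(t)\|\le1$, the Arzel\`a--Ascoli theorem and a diagonal extraction over a countable set $\{\varphi_k\}\subset C^\infty(\TT\times\RR)$ dense in $C^0_0(\TT\times\RR)$ give a subsequence along which $\langle\Theta_n(\cdot),\varphi_k\rangle$ converges uniformly on $[0,T_0]$ for each $k$; a $3\varepsilon$-argument based on $\eqref{prop: Properties of Theta(n) : Result 2}$ upgrades this to uniform-in-$t$ convergence of $\langle\Theta_n(\cdot),\varphi\rangle$ for every $\varphi\in C^0_0(\TT\times\RR)$. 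The limits define $\Theta(t)\in\curlyM^+(\TT\times\RR)$ with $\|\Theta(t)\|\le1$ and $\Theta\in\Cw([0,T_0];\curlyM^+(\TT\times\RR))$, i.e.\ $\eqref{prop: Properties of Theta(n) : Result 3}$; passing to the limit in $\eqref{prop: Properties of Theta(n) : Result 1}$ gives $\eqref{prop: Properties of Theta(n) : Result 4}$.

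For (iv), in the $\Theta_n$-equation tested against $\psi$ the pairings $\langle\Theta_n,\partial_t\psi\rangle$, $\langle\Theta_n,\Parti(\xi)\psi\rangle$, $\langle\Theta_n,\xi\Parti(\xi)\partial_\xi\psi\rangle$ converge by $\eqref{prop: Properties of Theta(n) : Result 3}$ alone, and $\langle\Theta_n,u_n\partial_x\psi\rangle$ converges by combining $\eqref{prop: Properties of Theta(n) : Result 3}$ with the strong convergence $u_n\to u$ in $C([0,T_0];C(\TT))$ from Lemma~\ref{lem: Improved Convergences}. The remaining task is to prove, for $g=\psi-\xi\partial_\xi\psi\in C^1_c((0,T_0)\times\TT\times\RR)$ and $\Sigma$ the $L^2(0,T_0;H^1(\TT))$-weak limit of $\Sigma_n$,
\begin{align*}
    \int_0^{T_0}\TTint\Sigma_n\,g(t,x,\rho_n)\,\dd x\,\dd t\longrightarrow\int_0^{T_0}\bigl\langle\Theta(t),\Sigma(t,\cdot)\,g(t,\cdot,\cdot)\bigr\rangle\,\dd t.
\end{align*}
I would approximate $g$ uniformly on a compact box containing $\spt(g)$ by finite sums $\sum_j a_j(t,x)b_j(\xi)$ with $a_j$ smooth and $b_j\in C^1(\RR)$ (Stone--Weierstrass), the remainder controlled by $\|g-\sum_j a_jb_j\|_{L^\infty}\,\|\Sigma_n\|_{L^2(0,T_0;L^2(\TT))}\,T_0^{1/2}$; for each fixed $j$, Lemma~\ref{lem: Compensated Compactness} gives $b_j(\rho_n)\Sigma_n\weak\overline{b_j}\Sigma$ in $L^2(0,T_0;L^2(\TT))$, whence $\int_0^{T_0}\TTint a_j\Sigma_nb_j(\rho_n)\,\dd x\,\dd t\to\int_0^{T_0}\TTint a_j\overline{b_j}\Sigma\,\dd x\,\dd t$. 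Finally, since the $x$-marginal of $\Theta(t)$ is the Lebesgue measure on $\TT$ (because $\langle\Theta_n(t),\phi(x)\rangle=\TTint\phi\,\dd x$ for all $n$), $\Theta(t)$ disintegrates as a Young measure $\Theta(t)=\int_\TT\delta_x\otimes\nu_{t,x}\,\dd x$, and comparing $\eqref{prop: Properties of Theta(n) : Result 3}$ with $b_j(\rho_n)\weakstar\overline{b_j}$ forces $\overline{b_j}(t,x)=\langle\nu_{t,x},b_j\rangle$ a.e.; summing over $j$ and refining the approximation yields the displayed limit, and reassembling the four limits reproduces exactly $\eqref{prop: Properties of Theta(n) : Result 5}$. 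The main obstacle is this last step: $\Sigma_n$ and $\rho_n$ converge only weakly, so the products $\Sigma_n b_j(\rho_n)$ cannot be passed to the limit by soft arguments --- this is precisely the role of Lemma~\ref{lem: Compensated Compactness}, whose applicability rests on the $L^2(0,T_0;H^1(\TT))$-control of $\Sigma_n$ (and of $\partial_xc_n$) provided by the refined a-priori estimates, the feature that distinguishes the present non-local setting from the local pressure-law case.
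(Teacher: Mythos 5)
Your proof is correct and follows essentially the same route as the paper, which for the Arzel\`a--Ascoli and limit-passage steps simply refers the reader to the proof of Proposition 12 in \cite{Hillairet_New_Physical}; you supply the details that the paper outsources. A few observations on the comparison: the paper's own argument for weak continuity of each fixed $\Theta_n$ uses the mean value theorem to bound $\langle\Theta_n(t)-\Theta_n(s),\varphi\rangle$ by $\|\partial_2\varphi\|_{L^\infty}\|\rho_n(t)-\rho_n(s)\|_{L^1}$, which gives continuity for each $n$ but not the \emph{uniform} equicontinuity that Arzel\`a--Ascoli requires, since $\|\rho_n(t)-\rho_n(s)\|_{L^1}$ is not controlled uniformly in $n$. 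Your derivation of the $n$-level kinetic equation for $\Theta_n$, followed by the H\"older-$1/2$ modulus via the $L^2(0,T_0;L^\infty(\TT))$ bound on $\partial_x u_n$ from Lemma~\ref{lem:Uniform Bound on the Solutions} and Cauchy--Schwarz, is precisely the missing ingredient and matches how \cite{Hillairet_New_Physical} fills it in. Your limit-passage argument is also the right one: splitting the $\Sigma_n$-dependent terms into $\tfrac1\mu\langle\Sigma_n\Theta_n,\psi-\xi\partial_\xi\psi\rangle$, approximating the test function by tensor products via Stone--Weierstrass, applying the compensated-compactness Lemma~\ref{lem: Compensated Compactness} to each $\Sigma_n b_j(\rho_n)$, and identifying $\overline{b_j}(t,x)=\langle\nu_{t,x},b_j\rangle$ through the Young-measure disintegration of $\Theta(t)$ with Lebesgue $x$-marginal. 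One small point: you correctly use $\Sigma_n=\mu\partial_xu_n-\Parti(\rho_n)$ (the Section~4 convention) even though Section~5 of the paper contains a sign typo writing $+\Parti(\rho_n)$; the minus sign is the one actually used throughout, e.g.\ in the proof of Lemma~\ref{lem: Improved Convergences}, so your reading is the correct one.
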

\begin{proof}
    For $\varphi \in C^0_c(\TT \times \RR)$, we have
    \begin{align*}
        \langle \Theta_n(t),\varphi \rangle 
        = \TTint \varphi(x,\rho_n(t,x)) \dd x 
        \leq
        ||\varphi||_{L^\infty(\TT \times \RR)},
    \end{align*}
    and therefore $\eqref{prop: Properties of Theta(n) : Result 2}$. 
    For the weak continuity, we fix $t,s \in [0,T_0]$ and verify with the mean value theorem for $\varphi \in C^1_c(\TT \times \RR)$
    \begin{align*}
        \langle \Theta_n(t) - \Theta_n(s), \varphi \rangle 
        \leq
        ||\partial_2\varphi||_{L^\infty(\TT \times \RR)} ||\rho_n(t,\cdot) - \rho_n(s,\cdot)||_{L^1(\TT)}.
    \end{align*}
    Here $\partial_2$ denotes the partial derivative with respect to the second variable.
    Recalling that $\rho_n \in C([0,T_0];H^1(\TT))$ and using the fact that $C^\infty_c(\TT \times \RR)$ lies dense in $C^0_0(\TT \times \RR)$ with respect to $||\cdot||_{L^\infty(\TT \times \RR)}$ yields the continuity of the map
    \begin{align*}
      [0,T_0] \ni t \mapsto \langle \Theta_n(t), \varphi \rangle \in \RR
    \end{align*}
    for any $\varphi \in C^0_c(\TT \times \RR)$. Hence $\Theta_n \in \Cw([0,T_0]; \curlyM^+(\TT \times \RR))$. 
    To verify $\eqref{prop: Properties of Theta(n) : Result 1}$, we take $\varphi \in C^0_c(\TT \times \RR)$ with $\spt\, (\varphi) \subseteq \TT \times \bigl(\RR \setminus [(2M_0)^{-1},2M_0]\bigr)$. Then we have for any $t \in [0,T_0]$
    \begin{align*}
        \langle \Theta_n(t), \varphi \rangle 
        = \TTint \varphi(x,\rho_n(t,x)) \dd x = 0,
    \end{align*}
    by virtue of \ref{unif_bound_rho} in Lemma~\ref{lem:Uniform Bound on the Solutions}. Thus assertion $\eqref{prop: Properties of Theta(n) : Result 1}$ follows.
    The convergence of $(\Theta_n)_{n\in\NN}$ follows from an application of the Arzelà--Ascoli theorem by a density argument.
    The fact that the limit measure $\Theta$ satisfies $\eqref{prop: Properties of Theta(n) : Result 5}$ follows from Lemma~\ref{lem:Uniform Bound on the Solutions}, Lemma~\ref{lem: Improved Convergences} and Lemma~\ref{lem: Compensated Compactness}.
    For a detailed proof we refer the reader to the proof of Proposition 12 in \cite{Hillairet_New_Physical}.
    Relation $\eqref{prop: Properties of Theta(n) : Result 4}$ is a consequence of the convergence $\eqref{prop: Properties of Theta(n) : Result 3}$ and $\eqref{prop: Properties of Theta(n) : Result 1}$.
\end{proof}


Following \cite{Hill_Note}, we construct now an appropriate solution to the target BN system $\eqref{BN-System to justify}$ subject to the initial conditions $\alpha_+^0,\alpha_-^0,\rho_+^0,\rho_-^0$ given by the hypothesis of Theorem~\ref{Thm:Main Result}.
This solution gives then rise to a parametrized measure that satisfy the same equation $\eqref{prop: Properties of Theta(n) : Result 5}$ as $\Theta$ with the same initial condition (see hypothesis of Theorem~\ref{Thm:Main Result}). 
Then, a uniqueness result concludes the proof of Theorem~\ref{Thm:Main Result}.
Since the techniques used to prove the following theorem follow techniques demonstrated in \cite{Hill_Note}, we give the proof of the following result in the Appendix.

\begin{theorem}\label{thm: Existence Solutions to the BN-System}
    Let the hypotheses and notation of Lemma~\ref{lem:Uniform Bound on the Solutions} hold true.
    Then there exist some $T_1 \in (0,T_0]$ and a weak solution
    \begin{align*}
        \rho_{\pm},\alpha_{\pm} \in L^\infty(0,T_1;L^\infty(\TT)) \cap C([0,T_1];L^1(\TT))
    \end{align*}
    of the BN system
    \begin{alignat}{2}\label{thm: Existence Solutions to the BN-system : Result 2}
    \left\{
        \begin{aligned}
            \partial_t \alpha_\pm + u \partial_x \alpha_{\pm} 
            &= \frac{\alpha_\pm }{\mu} (\Parti(\rho_\pm) - \overline{P})
            \quad &&\text{in } (0,T_1) \times \TT,\\
            \partial_t \rho_{\pm} + \partial_x(\rho_\pm u)
            &=
            \frac{\rho_\pm}{\mu} (\overline{P} - \Parti(\rho_\pm))
            \quad &&\text{in } (0,T_1) \times \TT,\\
            \rho_\pm(0,\cdot) &= \rho_\pm^0, \quad
            \alpha_\pm(0,\cdot) = \alpha_\pm^0 \quad
            &&\text{in } \TT,
        \end{aligned}
        \right.
    \end{alignat}
    where $u \in L^\infty(0,T_0;H^1(\TT))$ with $\partial_x u \in L^2(0,T_0;L^\infty(\TT))$ denotes the limit velocity in Lemma~\ref{lem:Uniform Bound on the Solutions}.\\
    Moreover, we have
    \begin{align}\label{thm: Existence Solutions to the BN-system : Result 3}
       \alpha_\pm \geq 0, \quad 
        (2M_0)^{-1} \leq \rho_\pm \leq 2M_0 
        \quad \text{a.e. on } [0,T_1] \times \TT.
    \end{align}
\end{theorem}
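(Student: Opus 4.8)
The plan is to construct the solution by the method of characteristics, using that both equations in $\eqref{thm: Existence Solutions to the BN-system : Result 2}$ are transport equations along the common velocity field $u$, and that $\overline{P}$, $\Sigma$ are \emph{given} data furnished by Lemma~\ref{lem:Uniform Bound on the Solutions}. Since $u \in L^\infty(0,T_0;H^1(\TT))$ with $\partial_x u \in L^2(0,T_0;L^\infty(\TT))$, the field $u(t,\cdot)$ is bounded and Lipschitz on $\TT$ with Lipschitz constant $\|\partial_x u(t)\|_{L^\infty(\TT)} \in L^2(0,T_0) \subseteq L^1(0,T_0)$, so the Carathéodory version of the Cauchy--Lipschitz theorem provides a unique flow $X \in C([0,T_0]\times\TT)$ solving $\partial_t X(t,x) = u(t,X(t,x))$ with $X(0,x)=x$, for which $X(t,\cdot)$ is, for each $t$, a bi-Lipschitz homeomorphism of $\TT$ whose Jacobian $J(t,\cdot):=\partial_x X(t,\cdot)$ exists a.e.\ and solves $\partial_t J = (\partial_x u)(t,X)\,J$, $J(0,\cdot)\equiv 1$. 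The key structural observation is that the system decouples: once $\rho_\pm$ is known, the equation for $\alpha_\pm$ is linear, so the core task is the scalar quasilinear transport equation $\partial_t \rho_\pm + \partial_x(\rho_\pm u) = \tfrac{\rho_\pm}{\mu}(\overline{P} - \Parti(\rho_\pm))$.

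First I would translate this into a family of scalar ODEs. Using $\partial_x(\rho_\pm u) = u\,\partial_x\rho_\pm + \rho_\pm\,\partial_x u$ and writing $R(t)=R(t;x)$ for the value of $\rho_\pm$ along the characteristic issued from $x$, the equation becomes, for a.e.\ $x\in\TT$,
\begin{equation*}
\frac{\dd}{\dd t}\, R(t) = -R(t)\,(\partial_x u)(t,X(t,x)) + \frac{R(t)}{\mu}\bigl(\overline{P}(t,X(t,x)) - \Parti(R(t))\bigr), \qquad R(0)=\rho_\pm^0(x).
\end{equation*}
The right-hand side is measurable in $t$, of class $C^1$ in $R$, and locally Lipschitz in $R$ with an $L^1$-in-time Lipschitz constant, so Carathéodory theory yields a unique absolutely continuous local solution. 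The decisive point is the a priori $L^\infty$ bound: by lower semicontinuity of the $L^\infty$ norm and item~\ref{unif_bound_rho} of Lemma~\ref{lem:Uniform Bound on the Solutions} one has $\|\overline{P}\|_{L^\infty(0,T_0;L^\infty(\TT))}\le\Kparti$, hence as long as $R(t)\in[(2M_0)^{-1},2M_0]$ (so that also $|\Parti(R(t))|\le\Kparti$), Gronwall's inequality combined with $\int_0^t\|\partial_x u(s)\|_{L^\infty(\TT)}\,\dd s\le\sqrt{t}\,\|\partial_x u\|_{L^2(0,T_0;L^\infty(\TT))}$ gives
\begin{equation*}
\Bigl|\log\tfrac{R(t)}{\rho_\pm^0(x)}\Bigr| \le \sqrt{t}\,\|\partial_x u\|_{L^2(0,T_0;L^\infty(\TT))} + \tfrac{2t}{\mu}\,\Kparti.
\end{equation*}
Since $\rho_\pm^0(x)\in[M_0^{-1},M_0]$, a standard continuation argument produces $T_1\in(0,T_0]$, depending only on $M_0,\|u_0\|_{H^1(\TT)},\mu,\coup,\kappa$, such that $R(t;x)\in[(2M_0)^{-1},2M_0]$ for all $t\in[0,T_1]$ and a.e.\ $x$; this simultaneously extends the ODE solution to $[0,T_1]$ and yields the density bound in $\eqref{thm: Existence Solutions to the BN-system : Result 3}$. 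The companion ODE $\frac{\dd}{\dd t} A(t) = \tfrac{A(t)}{\mu}\bigl(\Parti(R(t)) - \overline{P}(t,X(t,x))\bigr)$ with $A(0)=\alpha_\pm^0(x)$ integrates to $A(t)=\alpha_\pm^0(x)\exp\!\bigl(\tfrac1\mu\int_0^t(\Parti(R)-\overline{P})\,\dd s\bigr)\ge 0$, which gives $\alpha_\pm\ge 0$.

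I would then define $\rho_\pm(t,y):=R\bigl(t;X(t,\cdot)^{-1}(y)\bigr)$ and $\alpha_\pm(t,y):=A\bigl(t;X(t,\cdot)^{-1}(y)\bigr)$; measurable dependence of the ODE solutions on the initial point, together with the bi-Lipschitz property of $X(t,\cdot)$, places $\rho_\pm,\alpha_\pm$ in $L^\infty(0,T_1;L^\infty(\TT))$, while joint continuity of $X$ and a density/dominated-convergence argument upgrade this to $C([0,T_1];L^1(\TT))$. Finally one checks that these functions solve $\eqref{thm: Existence Solutions to the BN-system : Result 2}$ in $\curlyD^\prime$: for $\varphi \in \curlyD((0,T_1)\times\TT)$, setting $\psi(t,y)=\varphi(t,X(t,y))$ one has $\partial_t\psi=(\partial_t\varphi+u\,\partial_x\varphi)(t,X)$, and changing variables $x=X(t,y)$ in the weak formulation reduces it --- via $\partial_t(RJ)=\tfrac{RJ}{\mu}(\overline{P}-\Parti(R))$ and $\partial_t(AJ)=AJ\,(\partial_x u)(t,X)+\tfrac{AJ}{\mu}(\Parti(R)-\overline{P})$ --- to the integrated form of the ODEs above. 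I expect the main obstacle to be the uniform-in-$x$ $L^\infty$ control of $\rho_\pm$ on a time interval independent of the strongly oscillating data, which is exactly where the smallness of $T_1$ and the quantitative bounds of Lemma~\ref{lem:Uniform Bound on the Solutions} enter, together with the care needed to verify that the characteristic construction genuinely produces distributional solutions from merely $L^\infty$ initial data; a technically equivalent route, closer to \cite{Hill_Note}, would run a contraction-mapping iteration for $\rho_\pm$ in a ball of $C([0,T_1];L^1(\TT))$ and then solve the linear $\alpha_\pm$ equation, which I would fall back on if the change-of-variables bookkeeping proved unwieldy.
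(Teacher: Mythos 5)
Your argument is correct, and it takes a genuinely different route from the paper's. The paper proves this result by referring to Theorem~\ref{thm:App:Existence BN Solutions} in the Appendix, whose proof is a Banach fixed-point argument: the linear transport equations are solved using DiPerna--Lions theory (Lemma~\ref{lem:App: Existence linearized BN}), the resulting solution operator $\curlyL_T$ and the nonlinear substitution $\curlyN\curlyL_T$ are shown to be a bounded self-map and a contraction on a ball of $L^\infty(0,T;L^1(\TT))^2$ for $T$ small (Lemmata~\ref{lem:App: Linear Solution Operator Properties} and~\ref{lem:App: Nonlinear Solution Operator Properties}), and only \emph{after} obtaining the fixed point does the paper invoke the characteristic representation formula to read off the sign of $\alpha$ and the two-sided bound on $\rho$. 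You instead construct the solution directly along characteristics, which merges the existence step and the pointwise bounds into one argument and, in addition, exploits the upper-triangular structure of the system (the $\rho_\pm$ equation is autonomous; the $\alpha_\pm$ equation is linear once $\rho_\pm$ is known) more explicitly than the paper's coupled fixed-point iteration does. What the paper's route buys is robustness: the DiPerna--Lions framework handles the $L^\infty$ coefficient $\overline{P}$ and the $L^\infty$ initial data without ever evaluating them along a measure-zero curve, whereas your route has to make sense of $\overline{P}(s,X(s,x))$ as a Carathéodory right-hand side; this is fine here because $\partial_x u \in L^2(0,T_0;L^\infty(\TT))$ makes the flow $X$ bi-Lipschitz in $x$, so pre-composition with $X(t,\cdot)$ preserves Lebesgue null sets and, by Fubini, the slice $t \mapsto \overline{P}(t,X(t,x))$ is well-defined for a.e.\ $x$ independently of the chosen representative --- you gesture at this (``the care needed to verify that the characteristic construction genuinely produces distributional solutions from merely $L^\infty$ initial data'') but it is the one point a full write-up would need to nail down, together with the measurability-in-$x$ of the Carathéodory ODE solution and the stability estimate needed to upgrade to $C([0,T_1];L^1(\TT))$. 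Your quantitative Gronwall bound on $\log R$ and the resulting choice of $T_1$ are consistent with the paper's, since $\|\overline{P}\|_{L^\infty}\le\Kparti$ by weak-$\ast$ lower semicontinuity. Your suggested fallback --- a contraction in $C([0,T_1];L^1(\TT))$ for $\rho_\pm$ followed by the linear $\alpha_\pm$ problem --- is essentially what the paper does.
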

\begin{proof}
    This result follows directly from Theorem~\ref{thm:App:Existence BN Solutions}.
\end{proof}

The constructed solutions to the BN system gives rise to a parametrized measure $\othet$ as follows:\\
For $t \in [0,T_1]$, we define $\othet(t)$ via
\begin{align}\label{defi:othet}
    \langle \othet(t),\varphi\rangle := 
    \TTint \alpha_+(t,x) \varphi(x,\rho_+(t,x)) 
    +
    \alpha_-(t,x) \varphi(x,\rho_-(t,x)) \, \mathrm{d}x
\end{align}
for $\varphi \in C^0_c(\TT \times \RR)$.
It is straightforward to verify that $\othet(t) \in \curlyM^+(\TT \times \RR)$.
In the following proposition, we collect some properties of $\othet$, including the crucial fact that $\othet$ satisfies the same equation as $\Theta$.

\begin{proposition}[Properties of $\othet$]\label{prop: Properties of othet}
    Under the hypothesis and notation of Theorem~\ref{thm: Existence Solutions to the BN-System}, let us define $\othet$ via $\eqref{defi:othet}$.\\
    Then we have $\othet \in \Cw([0,T_1];\curlyM^+(\TT \times \RR))$. Furthermore, for any $t \in [0,T_1]$, we have
    \begin{align}\label{prop: Properties of othet : Result 1}
        \spt\, \bigl(\othet(t)\bigr) \subseteq \TT \times \bigl[(2M_0)^{-1},2M_0\bigr]
    \end{align}
    and
    \begin{align}\label{prop: Properties of othet : Result 2}
        ||\othet(t)||_{\curlyM(\TT \times \RR)} 
        \leq 
        ||\alpha_+||_{C([0,T_1];L^1(\TT))} + ||\alpha_-||_{C([0,T_1];L^1(\TT))}.
    \end{align}
    Moreover, $\othet$ satisfies 
    \begin{align}\label{prop: Properties of othet : Result 3}
        \partial_t \othet
        +
        \partial_x(\othet u)
        -
        \frac{1}{\mu}\partial_\xi\biggl( [\xi \Sigma + \xi \Parti(\xi)]\othet \biggr)
        -
        \frac{1}{\mu}\biggl( [\Sigma + \Parti(\xi)] \othet \biggr) 
        = 0
    \end{align}
    in $\curlyD^\prime((0,T_1) \times \TT \times \RR)$.
\end{proposition}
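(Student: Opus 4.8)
The plan is to read off the first three assertions directly from the definition \eqref{defi:othet} together with the bounds of Theorem~\ref{thm: Existence Solutions to the BN-System}, and to derive the kinetic equation \eqref{prop: Properties of othet : Result 3} by differentiating $t\mapsto\langle\othet(t),\varphi\rangle$ in time and substituting the BN system \eqref{thm: Existence Solutions to the BN-system : Result 2}. For the elementary assertions I would argue as follows. Positivity of $\othet(t)$ is immediate from $\alpha_\pm\geq 0$; for $\varphi\in C^0_0(\TT\times\RR)$ with $|\varphi|\leq 1$ one has $\langle\othet(t),\varphi\rangle\leq\TTint\alpha_+(t,x)+\alpha_-(t,x)\,\dd x=\|\alpha_+(t,\cdot)\|_{L^1(\TT)}+\|\alpha_-(t,\cdot)\|_{L^1(\TT)}$, so taking the supremum over such $\varphi$ gives \eqref{prop: Properties of othet : Result 2} and in particular shows $\othet(t)$ is finite; testing against $\varphi$ with $\spt\,\varphi\subseteq\TT\times(\RR\setminus[(2M_0)^{-1},2M_0])$ and using $(2M_0)^{-1}\leq\rho_\pm\leq 2M_0$ from \eqref{thm: Existence Solutions to the BN-system : Result 3} gives $\langle\othet(t),\varphi\rangle=0$, hence \eqref{prop: Properties of othet : Result 1}. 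For weak continuity I would first take $\varphi\in C^1_c(\TT\times\RR)$ and split, for $s,t\in[0,T_1]$,
\[
\langle\othet(t)-\othet(s),\varphi\rangle
=\sum_{\pm}\TTint\bigl(\alpha_\pm(t,x)-\alpha_\pm(s,x)\bigr)\varphi(x,\rho_\pm(t,x))\,\dd x
+\sum_{\pm}\TTint\alpha_\pm(s,x)\bigl(\varphi(x,\rho_\pm(t,x))-\varphi(x,\rho_\pm(s,x))\bigr)\,\dd x ,
\]
bounding the first sum by $\|\varphi\|_{L^\infty}\sum_\pm\|\alpha_\pm(t,\cdot)-\alpha_\pm(s,\cdot)\|_{L^1(\TT)}$ and the second, via the mean value theorem, by $\|\partial_\xi\varphi\|_{L^\infty}\|\alpha_\pm\|_{L^\infty(0,T_1;L^\infty(\TT))}\sum_\pm\|\rho_\pm(t,\cdot)-\rho_\pm(s,\cdot)\|_{L^1(\TT)}$; both tend to $0$ as $s\to t$ since $\alpha_\pm,\rho_\pm\in C([0,T_1];L^1(\TT))$. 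A density argument based on \eqref{prop: Properties of othet : Result 2} then carries continuity of $t\mapsto\langle\othet(t),\varphi\rangle$ over to all $\varphi\in C^0_0(\TT\times\RR)$, so that $\othet\in\Cw([0,T_1];\curlyM^+(\TT\times\RR))$.

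To obtain \eqref{prop: Properties of othet : Result 3}, the first step is to eliminate $\overline P$: by \eqref{Prop:Limit System : Result 2} we have $\overline P=\mu\partial_xu-\Sigma$ a.e., so \eqref{thm: Existence Solutions to the BN-system : Result 2} recasts as
\[
\partial_t\rho_\pm+u\partial_x\rho_\pm=-\tfrac1\mu\rho_\pm\bigl(\Sigma+\Parti(\rho_\pm)\bigr),
\qquad
\partial_t\alpha_\pm+u\partial_x\alpha_\pm=-\alpha_\pm\partial_xu+\tfrac1\mu\alpha_\pm\bigl(\Sigma+\Parti(\rho_\pm)\bigr),
\]
the $\partial_xu$-contribution having cancelled in the $\rho_\pm$-equation. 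Then, for fixed $\varphi\in C^1_c(\TT\times\RR)$, I would differentiate $\langle\othet(t),\varphi\rangle=\sum_\pm\TTint\alpha_\pm\varphi(\cdot,\rho_\pm)\,\dd x$ in time through the chain rule $\partial_t\bigl[\alpha_\pm\varphi(\cdot,\rho_\pm)\bigr]=(\partial_t\alpha_\pm)\varphi(\cdot,\rho_\pm)+\alpha_\pm(\partial_\xi\varphi)(\cdot,\rho_\pm)\partial_t\rho_\pm$ and substitute the two rewritten equations. Collecting the convective contributions as $-u\,\partial_x\bigl[\alpha_\pm\varphi(\cdot,\rho_\pm)\bigr]+u\,\alpha_\pm(\partial_x\varphi)(\cdot,\rho_\pm)$ and integrating the first of these over $\TT$ by parts produces a term $+\alpha_\pm\partial_xu\,\varphi(\cdot,\rho_\pm)$ which cancels exactly against the term $-\alpha_\pm\partial_xu\,\varphi(\cdot,\rho_\pm)$ present in the rewritten $\alpha_\pm$-equation above; what survives is precisely
\[
\frac{\dd}{\dd t}\langle\othet(t),\varphi\rangle
=\langle\othet(t),u\partial_x\varphi\rangle
-\tfrac1\mu\bigl\langle\othet(t),\xi(\Sigma+\Parti(\xi))\partial_\xi\varphi\bigr\rangle
+\tfrac1\mu\bigl\langle\othet(t),(\Sigma+\Parti(\xi))\varphi\bigr\rangle .
\]
Carrying out the same computation with $\varphi$ replaced by $\psi(t,\cdot,\cdot)$ for $\psi\in C^1_c((0,T_1)\times\TT\times\RR)$, and integrating in $t$ (the endpoint terms dropping out by compact support), is exactly the weak formulation of \eqref{prop: Properties of othet : Result 3}.

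I expect the main obstacle to be the rigorous justification of this last computation, since $\rho_\pm,\alpha_\pm$ are only of class $L^\infty\cap C([0,T_1];L^1(\TT))$: the chain rule and the spatial integration by parts must be carried out at the level of renormalized solutions. The key point I would invoke is that the drift $u$ satisfies $\partial_xu\in L^2(0,T_1;L^\infty(\TT))$, hence is Lipschitz in $x$ with an $L^2$-in-time Lipschitz constant, so the transport equations in \eqref{thm: Existence Solutions to the BN-system : Result 2} possess the renormalization property; consequently, for $\varphi\in C^1_c(\TT\times\RR)$, the function $x\mapsto\alpha_\pm(t,x)\varphi(x,\rho_\pm(t,x))$ satisfies $\partial_t\bigl[\alpha_\pm\varphi(\cdot,\rho_\pm)\bigr]+\partial_x\bigl(u\,\alpha_\pm\varphi(\cdot,\rho_\pm)\bigr)=S_\pm$ in $\curlyD^\prime((0,T_1)\times\TT)$ with $S_\pm$ the source dictated by the chain rule, and pairing this equation with functions constant in $x$ legitimizes the manipulations above. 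An alternative is to establish the identity on the regularized solutions used to construct the BN solution in Theorem~\ref{thm:App:Existence BN Solutions} and pass to the limit. Apart from this renormalization step --- handled exactly as in the proof of Proposition~12 in \cite{Hillairet_New_Physical} and the corresponding argument in \cite{Hill_Note} --- every step is elementary.
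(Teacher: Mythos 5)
Your proposal is correct and takes essentially the same route as the paper, which is very terse here (the paper disposes of \eqref{prop: Properties of othet : Result 1}--\eqref{prop: Properties of othet : Result 2} in a sentence each, derives weak continuity from $\alpha_\pm,\rho_\pm\in C([0,T_1];L^1(\TT))$ as you do, and obtains the kinetic equation \eqref{prop: Properties of othet : Result 3} by substituting $\Sigma=\mu\partial_x u-\overline P$ and invoking the regularization argument of DiPerna--Lions, exactly as you propose). You merely fill in the algebra that the paper leaves implicit — the elimination of $\overline P$, the chain rule, the integration by parts in $x$, and the cancellation of the $\alpha_\pm\partial_xu\,\varphi$ term — and your bookkeeping of signs and coefficients reproduces \eqref{prop: Properties of othet : Result 3} exactly.
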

\begin{proof}
    The weak continuity of $\othet$ follows from the continuity of $\alpha_\pm$ and $\rho_\pm$.
    Assertion $\eqref{prop: Properties of othet : Result 1}$ follows from $\eqref{thm: Existence Solutions to the BN-system : Result 3}$ and assertion $\eqref{prop: Properties of othet : Result 2}$ is trivial.
    Finally, equation $\eqref{prop: Properties of othet : Result 3}$ follows from a regularization argument (see for instance \cite{DiPerna}) and the fact that $\Sigma = \mu \partial_x u - \overline{P}$.
\end{proof}

Finally, using the properties of $\Theta$ and $\othet$ and a uniqueness result for measure valued solutions to transport equations (see Theorem~\ref{thm:App:Uniqueness}), we prove our main result Theorem~\ref{Thm:Main Result}.


\begin{proof}[Proof of Theorem~\ref{Thm:Main Result}]
    We stick to the notation of Proposition~\ref{prop: Properties of Theta(n)} and Proposition~\ref{prop: Properties of othet}.
    We investigate the difference $\nu := \Theta - \othet$. We have $\nu \in \Cw([0,T_1];\curlyM(\TT \times \RR))$ with 
    \begin{align*}
        \sup\limits_{t\in[0,T_0]}||\nu(t)||_{\curlyM(\TT \times \RR)} < \infty
    \end{align*}
    and 
    \begin{align}\label{Thm:Main Result : Proof 1}
        \spt\,\bigl(\nu(t)\bigr) \subseteq \TT \times \bigl[(2M_0)^{-1}, 2M_0\bigr] \quad \forall \, t \in [0,T_1].
    \end{align}
    Furthermore, we have by hypotheses of Theorem~\ref{Thm:Main Result} that $\Theta(0) = \othet(0)$, so that
    \begin{align*}
        \nu(0) = 0.
    \end{align*}
    Moreover, $\nu$ satisfies
    \begin{align*}
        \partial_t \nu 
        +
        \partial_x( \nu u)
        -
        \frac{1}{\mu}\partial_\xi\biggl( (\xi \Sigma - \xi \Parti(\xi)) \nu \biggr)
        -
        \frac{1}{\mu}\biggl( (\Sigma - \Parti(\xi)) \nu \biggr)
        = 0
    \end{align*}
    in $\curlyD^\prime((0,T_0) \times \TT \times \RR)$
    since both, $\Theta$ and $\othet$ satisfy this equation (see Proposition~\ref{prop: Properties of Theta(n)} and Proposition~\ref{prop: Properties of othet}).
    Now, we choose a cutoff function $\chi \in C^\infty_c(\RR)$ such that
    \begin{align*}
        \chi \equiv 1 \quad \text{on } \bigl[(2M_0)^{-1},2M_0\bigr].
    \end{align*}
    By virtue of $\eqref{Thm:Main Result : Proof 1}$ we conclude with the definition of $\chi$, that $\nu$ satisfies in fact
    \begin{align*}
        \partial_t \nu
        +
        \mathrm{div}_{(x,\xi)}(\mathbf{V}\nu) 
        +
        g\nu
        =
        0
        \quad \text{in } \curlyD^\prime((0,T_0) \times \TT \times \RR),
    \end{align*}
    with $\mathbf{V}:=(V_1,V_2)$,
    \begin{align*}
        V_1(t,x,\xi):= u(t,x),
        \quad
        V_2(t,x,\xi):= -\frac{1}{\mu}(\xi\Sigma(t,x)-\xi\Parti(\xi))\chi(\xi)
    \end{align*}
    and
    \begin{align*}
        g(t,x,\xi):=-\frac{1}{\mu}(\Sigma(t,x) - \Parti(\xi))\chi(\xi).
    \end{align*}
    We notice that all prerequisites of Theorem~\ref{thm:App:Uniqueness} are satisfied, such that we can conclude
    \begin{align*}
        \nu = 0,
    \end{align*}
    i.e. $\Theta = \othet$.
    By the convergence \ref{conv_rho_pressure} in Lemma~\ref{lem:Uniform Bound on the Solutions}, we have for any $\eta\in C^\infty_c(0,T_1)$ and any $\phi \in C^\infty(\TT)$, that 
    \begin{align*}
        \int_0^{T_1}\TTint \Parti(\rho_n(t,x)) \eta(t) \phi(x) \; \dd x\, \dd t
        \longrightarrow 
        \int_0^{T_1} \TTint \overline{P}(t,x) \eta(t)\phi(x) \, \dd x \, \dd t.
    \end{align*}
    On the other hand, we have since $\othet = \Theta$ that
    \begin{align*}
        &\int_0^{T_1} \TTint \Parti(\rho_n(t,x)) \eta(t) \phi(x) \, \dd x \, \dd t
        = 
        \int_0^{T_1} \eta(t) \langle \Theta_n(t),\Parti(\xi) \phi(x)\rangle \, \dd t\\
        &\longrightarrow
        \int_0^{T_1} \eta(t) \langle \othet(t),\Parti(\xi)\phi(x)\rangle \, \dd t\\
        &\qquad=\int_0^{T_1} \TTint \eta(t) \phi(x) \bigl( \alpha_+(t,x)\Parti(\rho_+(t,x)) + \alpha_-(t,x)\Parti(\rho_-(t,x))  \bigr) \, \dd x \, \dd t.
    \end{align*}
    Since $\eta$ and $\phi$ were arbitrary, we conclude
    \begin{align}\label{Final Proof:eq1}
        \overline{P} = \alpha_+ \Parti(\rho_+) + \alpha_- \Parti(\rho_-).
    \end{align}
    By similar arguments, we conclude
    \begin{align}
        \rho = \alpha_+ \rho_+ + \alpha_- \rho_-.
    \end{align}
    Using the relations $\eqref{Final Proof:eq1}$ in $\eqref{thm: Existence Solutions to the BN-system : Result 2}_1$ and $\eqref{thm: Existence Solutions to the BN-system : Result 2}_2$ and adding up both equations yields that $\alpha_+ + \alpha_-$ satisfies
    \begin{alignat*}{2}
    \left\{
        \begin{aligned}
            \partial_t(\alpha_+ + \alpha_-) + u \partial_x(\alpha_+ + \alpha_-) &= \frac{\overline{P} - (\alpha_+ + \alpha_-) \overline{P}}{\mu} 
            \quad &&\text{in } (0,T_1) \times \TT,\\
            (\alpha_+ + \alpha_-)(0,\cdot) &= 1 \quad &&\text{in } \TT.
        \end{aligned}
    \right.
    \end{alignat*}
    By uniqueness (see e.g. \cite{DiPerna}), we conclude that 
    \begin{align}\label{Final proof:eq3}
        \alpha_+ + \alpha_- = 1 \quad \text{a.e. on } [0,T_1] \times \RR.
    \end{align}
    Using the relations $\eqref{Final Proof:eq1}$ and $\eqref{Final proof:eq3}$ in $\eqref{thm: Existence Solutions to the BN-system : Result 2}$ yields that $(\alpha_+,\alpha_-,\rho_+,\rho_-,u,c)$ solves $\eqref{BN-System to justify}$ in $\mathcal{D}^\prime((0,T_1)\times\TT)$. 
    The proof of Theorem~\ref{Thm:Main Result} is now complete.
    
\end{proof}


\section{Conclusions}\label{Conclusions}
In this paper, we have investigated the propagation of initial density oscillations for the non-local NSK system.
With the use of parametrized measures, we have derived a closed homogenized system consisting of a momentum equation for the velocity and a kinetic equation for a parametrized measure.
After assuming that the parametrized measure is a convex combination of Dirac-measures initially, we have proven that the kinetic equation preserves this structure.
With that structure for the parametrized measure, the kinetic equation then reduces to the BN system $\eqref{BN-System to justify}$. 
In that sense, we have justified the BN system $\eqref{BN-System to justify}$ rigorously as macroscopic description for a compressible liquid-vapor flow that is modeled with the non-local NSK equations on the detailed scale.
It would be interesting to extend this work to the 3D case as in \cite{HillProp} in the framework of finite-energy weak solutions.
It seems that the arguments of \cite{HillProp} should apply in order to prove that the effective equations are given by a kinetic equation for a parametrized measure and a momentum equation for the velocity.
However, proving the propagation of convex combinations of Dirac-measures seems difficult, since the arguments in \cite{Hill_Note} rely on an isentropic pressure law. 
As mentioned by the authors in \cite{Hill_Note}, it is not clear how these arguments carry over to a more general pressure law as for instance a pressure law of Van-der-Waals type.
Nevertheless, we could interpret the parametrized measure as a probability density function as done in \cite{Plotnikov} and investigate the resulting equations numerically.

\appendix


\section{Existence of Solutions to a BN System}\label{Existence of Solutions to a Baer--Nunziato System}

Here, we provide a proof of the following result that is concerned with the existence of weak solutions to a BN system. 
The proof follows the lines in \cite{Hill_Note}. 
There, the author proved the analogous result for an isentropic pressure law.
Since we are dealing with a different pressure law, we have to adjust the proof.
The precise statement reads as follows:

\begin{theorem}\label{thm:App:Existence BN Solutions}
    Let $T_0>0$ and $\rho_0, \alpha_0 \in L^\infty(\TT)$ with
        \begin{align*}
            \alpha_0(x)\geq 0, \quad M_0^{-1} \leq \rho_0(x) \leq M_0 \quad \text{for a.e. } x \in \TT
        \end{align*}
        for some positive constant $M_0$.
        Assume that $(P,\gamma)$ is admissible and let $\pi \in L^\infty(0,T_0;L^\infty(\TT))$ and $u \in L^\infty(0,T_0;L^2(\TT)) \cap L^2(0,T_0;H^1(\TT))$ with $\partial_x u \in L^1(0,T_0;L^\infty(\TT))$.\\
        Then there exist some time $T_1 \in (0,T_0]$ and $\alpha,\rho \in L^\infty(0,T_1;L^\infty(\TT)) \cap C([0,T_1];L^1(\TT))$, such that
        \begin{alignat}{2}\label{thm:App:Existence BN Solutions : Result 1}
        \left\{
            \begin{aligned}
                    \partial_t \alpha + u \partial_x \alpha &= \alpha(\Parti(\rho) - \pi),\\
                    \partial_t \rho + \partial_x(\rho u) &= \rho(\pi - \Parti(\rho))
            \end{aligned}
        \right.
        \end{alignat}
        hold in $\curlyD^\prime((0,T_1)\times \TT)$ and 
        \begin{align}\label{thm:App:Existence BN Solutions : Result 3}
            \alpha(0,\cdot) = \alpha_0,\quad \rho(0,\cdot) = \rho_0 \quad \text{a.e. in  } \TT.
        \end{align}
        Moreover, the solution $(\alpha,\rho)$ satisfies
        \begin{align}\label{thm:App:Existence BN Solutions : Result 2}
            \alpha(t,x)\geq 0, \quad (2M_0)^{-1} \leq \rho(t,x) \leq 2M_0 
            \quad \text{for a.e. } (t,x) \in [0,T_1] \times \TT.
        \end{align}
\end{theorem}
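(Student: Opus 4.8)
The plan is to construct the solution by the method of characteristics, which is available here because the velocity field, although only weakly regular in time, is Lipschitz in space: the hypothesis $\partial_x u \in L^1(0,T_0;L^\infty(\TT))$ together with $u \in L^2(0,T_0;H^1(\TT)) \hookrightarrow L^2(0,T_0;L^\infty(\TT))$ (so that $t\mapsto \|u(t,\cdot)\|_{L^\infty(\TT)}$ is in $L^2(0,T_0)$) guarantees, by a Carathéodory argument, that the Lagrangian flow $X=X(t,s,x)$ solving $\partial_t X(t,s,x) = u(t,X(t,s,x))$, $X(s,s,x)=x$, is well defined on $[0,T_0]^2\times\TT$, that $x\mapsto X(t,s,x)$ is a bi-Lipschitz homeomorphism of $\TT$ for each $t,s$, and that its Jacobian $J(t,s,x):=\partial_x X(t,s,x)$ satisfies $\partial_t\log J(t,s,x) = (\partial_x u)(t,X(t,s,x))$. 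First I would record these properties of $X$, including the measurable dependence of solutions of Carathéodory ODEs with parameter on the parameter.

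Next, for fixed $x\in\TT$, I would write down the ODEs that the Lagrangian traces $\bar\rho_x(t):=\rho(t,X(t,0,x))$ and $\bar\alpha_x(t):=\alpha(t,X(t,0,x))$ must satisfy if $(\rho,\alpha)$ solves \eqref{thm:App:Existence BN Solutions : Result 1}; using the Jacobian identity these reduce to the scalar Carathéodory problems
\[
\dot{\bar\rho}_x = \bar\rho_x\bigl(\pi(t,X(t,0,x)) - \Parti(\bar\rho_x) - (\partial_x u)(t,X(t,0,x))\bigr), \qquad \bar\rho_x(0)=\rho_0(x),
\]
\[
\dot{\bar\alpha}_x = \bar\alpha_x\bigl(\Parti(\bar\rho_x) - \pi(t,X(t,0,x))\bigr), \qquad \bar\alpha_x(0)=\alpha_0(x).
\]
For $\bar\rho_x$ ranging in a compact subset of $(0,\infty)$ the bracket in the first equation is bounded, for a.e.\ $t$, by an $L^1(0,T_0)$ function whose norm is controlled only by $\|\pi\|_{L^\infty(0,T_0;L^\infty(\TT))}$, $\|\partial_x u\|_{L^1(0,T_0;L^\infty(\TT))}$, $M_0$, $P$ and $\coup$; hence the first ODE is uniquely solvable, and the representation $\bar\rho_x(t)=\rho_0(x)\exp\bigl(\int_0^t(\pi-\Parti(\bar\rho_x)-\partial_x u)\,\dd s\bigr)$ lets me close an a priori bound: using $\Parti\ge 0$ for the upper estimate and the monotonicity of $\Parti$ (admissibility of $(P,\coup)$) for the lower one, a continuity/bootstrap argument — define $T_1$ as the largest time on which $(2M_0)^{-1}\le\bar\rho_x\le 2M_0$ for all $x$, note that on $[0,T_1]$ one in fact gets a strictly sharper bound once $\int_0^{T_1}(\|\pi(s,\cdot)\|_{L^\infty(\TT)}+\|\partial_x u(s,\cdot)\|_{L^\infty(\TT)})\,\dd s$ is small, which holds for small $T_1$ by absolute continuity of the integral — produces a time $T_1\in(0,T_0]$ depending only on the allowed quantities with $(2M_0)^{-1}\le\bar\rho_x(t)\le 2M_0$ on $[0,T_1]$. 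The second ODE is then linear and uniquely solvable, with $\bar\alpha_x(t)=\alpha_0(x)\exp\bigl(\int_0^t(\Parti(\bar\rho_x)-\pi)\,\dd s\bigr)\ge 0$ since $\alpha_0\ge 0$; this yields \eqref{thm:App:Existence BN Solutions : Result 2}.

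Finally I would transfer back to Eulerian variables by setting $\rho(t,y):=\bar\rho_{X(0,t,y)}(t)$ and $\alpha(t,y):=\bar\alpha_{X(0,t,y)}(t)$. Measurability of $(t,y)\mapsto(\rho,\alpha)$ follows from the measurable parameter-dependence of the ODE solutions (through $\rho_0(x),\alpha_0(x)$ and the traces of $\pi$ and $\partial_x u$ along $X(\cdot,0,x)$) composed with the continuous map $(t,y)\mapsto X(0,t,y)$; the continuity $t\mapsto\rho(t,\cdot),\alpha(t,\cdot)\in L^1(\TT)$ follows from continuity of $t\mapsto\bar\rho_x(t),\bar\alpha_x(t)$, the $L^\infty$ bounds just obtained, the bi-Lipschitz change of variables $y=X(t,0,x)$, and dominated convergence; and the $L^\infty$ bounds and nonnegativity are inherited directly. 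That $(\alpha,\rho)$ solves \eqref{thm:App:Existence BN Solutions : Result 1} in $\curlyD^\prime((0,T_1)\times\TT)$ and attains \eqref{thm:App:Existence BN Solutions : Result 3} is then verified by testing against $\varphi\in C^\infty_c((0,T_1)\times\TT)$, changing variables to Lagrangian coordinates, and invoking the two ODEs together with $\partial_t\log J=(\partial_x u)\circ X$; alternatively one may run the entire construction with mollified data $u_\varepsilon,\pi_\varepsilon,\rho_0^\varepsilon,\alpha_0^\varepsilon$, obtain classical solutions enjoying the above $\varepsilon$-uniform bounds, and pass to the limit using the stability of linear transport equations. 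I expect the principal difficulty to be making the a priori bound on $\rho$ genuinely self-consistent — controlling the bootstrap time $T_1$ solely in terms of the admissible data while handling the superlinearly growing nonlinearity $\rho\mapsto\rho\Parti(\rho)$ together with a $\partial_x u$ that is only integrable (not bounded) in time, so that no Eulerian maximum-principle shortcut applies — with the bookkeeping needed to promote the Lagrangian construction to a distributional solution of the asserted regularity a secondary, more technical point.
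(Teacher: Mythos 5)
Your proposal is correct but takes a genuinely different route from the paper's proof. The paper constructs the solution in Eulerian coordinates via a Banach fixed point: it first invokes DiPerna--Lions well-posedness for the linearized transport system (Lemma~\ref{lem:App: Existence linearized BN}), encodes it in a solution operator $\curlyL_T$, composes with the nonlinear map $\curlyN\curlyL_T$ that computes the right-hand sides, establishes stability and a Lipschitz estimate for $\Phi_T=\curlyL_T\circ\curlyN\curlyL_T$ on a ball of $L^\infty(0,T;L^\infty(\TT))^2$ in the $L^\infty(0,T;L^1(\TT))$ metric (Lemmata~\ref{lem:App: Linear Solution Operator Properties} and~\ref{lem:App: Nonlinear Solution Operator Properties}), and runs a Picard iteration; the Lagrangian flow and the exponential representation formulas you write down appear in the paper only at the very end, to read off the bounds~\eqref{thm:App:Existence BN Solutions : Result 2}. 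You instead construct directly in Lagrangian coordinates and exploit the observation that the system \emph{decouples} along characteristics: the trace $\bar\rho_x$ satisfies a closed scalar Carath\'eodory ODE, and once it is known $\bar\alpha_x$ satisfies a linear ODE. This removes the fixed-point iteration entirely and makes the bounds in~\eqref{thm:App:Existence BN Solutions : Result 2} intrinsic to the construction, exactly via the nonnegativity and monotonicity of $\Parti$ that you cite; the local Lipschitz structure of the right-hand side in $\bar\rho$ (from $P\in C^1$, encoded in the paper as the growth bound~\eqref{Growth for P}) is what makes the scalar ODE well posed and plays the role of the contraction estimate in the paper's Lemma~\ref{lem:App: Nonlinear Solution Operator Properties}. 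What you pay for this economy is the bookkeeping you correctly flag at the end: measurability of the parameter-dependent ODE solutions, transferring $C([0,T_1];L^1(\TT))$ continuity and the distributional formulation back to Eulerian variables via $\partial_t\log J=(\partial_x u)\circ X$, or, as you suggest, mollifying and passing to the limit using stability of linear transport. Both routes are sound; yours is the more elementary once one notices the Lagrangian decoupling, while the paper's is closer in spirit to the Eulerian renormalized-solution framework it relies on elsewhere.
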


In order to prove this theorem, we first investigate the linearized equation corresponding to $\eqref{thm:App:Existence BN Solutions : Result 1}$. The following result follows from the results in \cite{DiPerna}.

\begin{lemma}\label{lem:App: Existence linearized BN}
    Let $T_0>0$ and $\alpha_0,\rho_0 \in L^\infty(\TT)$. Let $u \in L^\infty(0,T_0;L^\infty(\TT))\cap L^2(0,T_0;H^1(\TT))$ with $\partial_x u \in L^1(0,T_0;L^\infty(\TT))$.\\ 
    Then, for any $f,g \in L^\infty(0,T_0;L^\infty(\TT))$, there exist unique functions
    \begin{align*}
        \alpha,\rho \in L^\infty(0,T_0;L^\infty(\TT))\cap C([0,T_0];L^1(\TT))
    \end{align*}
    that satisfy
    \begin{alignat}{2}\label{lem:App: Existence linearized BN : Result 1}
    \left\{
        \begin{aligned}
            \partial_t\alpha + u \partial_x \alpha &= \alpha f,\\ 
            \partial_t \rho + \partial_x(\rho u) &= \rho g
        \end{aligned}
    \right.
    \end{alignat}
    in $\curlyD^\prime((0,T_0)\times\TT)$ and
    \begin{align}\label{lem:App: Existence linearized BN : Result 2}
        \alpha(0,\cdot) = \alpha_0, \quad \rho(0,\cdot) = \rho_0 \quad \text{a.e. in } \TT.
    \end{align}
\end{lemma}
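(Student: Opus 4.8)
The plan is to exploit that the system $\eqref{lem:App: Existence linearized BN : Result 1}$ is linear and fully decoupled, so that it suffices to treat separately the transport equation $\partial_t\alpha + u\partial_x\alpha = \alpha f$ and the continuity equation $\partial_t\rho + \partial_x(\rho u) = \rho g$, the latter being rewritten in nonconservative form as $\partial_t\rho + u\partial_x\rho = (g-\partial_x u)\rho$. Since $u \in L^\infty(0,T_0;L^\infty(\TT))$ with $\partial_x u \in L^1(0,T_0;L^\infty(\TT))$, the field $u(t,\cdot)$ is Lipschitz with constant $||\partial_x u(t,\cdot)||_{L^\infty(\TT)} \in L^1(0,T_0)$, which places both equations within the well-posedness theory of \cite{DiPerna} (and, in fact, within the classical Carathéodory framework for the Lagrangian flow $X$ of $u$, which for every $t$ is a bi-Lipschitz homeomorphism of $\TT$ whose Jacobian is bounded above and below by positive constants depending only on $||\partial_x u||_{L^1(0,T_0;L^\infty(\TT))}$). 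The source terms $\alpha f$ and $\rho g$ are zeroth order with $L^1(0,T_0;L^\infty(\TT))$ coefficients, hence are harmlessly absorbed into the standard statements.

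For existence I would mollify $u,f,g$ in space and $\alpha_0,\rho_0$, solve the resulting classical problems along characteristics, which yields the representations $\alpha(t,X(t;0,x)) = \alpha_0(x)\exp\bigl(\int_0^t f(\tau,X(\tau;0,x))\,\dd\tau\bigr)$ and $\rho(t,X(t;0,x)) = \rho_0(x)\exp\bigl(\int_0^t (g-\partial_x u)(\tau,X(\tau;0,x))\,\dd\tau\bigr)$, and therefore the bounds
\[
||\alpha(t)||_{L^\infty(\TT)} \le ||\alpha_0||_{L^\infty(\TT)}\, e^{||f||_{L^1(0,T_0;L^\infty(\TT))}}, \qquad ||\rho(t)||_{L^\infty(\TT)} \le ||\rho_0||_{L^\infty(\TT)}\, e^{||g||_{L^1(0,T_0;L^\infty(\TT))}+||\partial_x u||_{L^1(0,T_0;L^\infty(\TT))}},
\]
uniformly in the mollification parameter. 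Passing to the limit via the stability of renormalized solutions from \cite{DiPerna} produces $\alpha,\rho \in L^\infty(0,T_0;L^\infty(\TT))$ solving $\eqref{lem:App: Existence linearized BN : Result 1}$ in $\curlyD^\prime((0,T_0)\times\TT)$ and attaining the data $\eqref{lem:App: Existence linearized BN : Result 2}$; the strong time-continuity part of the DiPerna--Lions theory (the solution admits a representative in $C([0,T_0];L^p(\TT))$ for every $p<\infty$, hence in $C([0,T_0];L^1(\TT))$ since $\TT$ has finite measure) gives the asserted regularity.

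For uniqueness I would use that, because $u$ is Lipschitz in space, every distributional solution of the stated integrability is renormalized; applying this to the difference $w$ of two solutions with identical data gives $\partial_t|w| + \partial_x(|w|u) = g|w|$ for the $\rho$-equation (and $\partial_t|w| + u\partial_x|w| = f|w|$ for the $\alpha$-equation) in $\curlyD^\prime$, and integrating over $\TT$ yields $\frac{\dd}{\dd t}||w(t)||_{L^1(\TT)} \le C(t)||w(t)||_{L^1(\TT)}$ with $C \in L^1(0,T_0)$, so that Gronwall's inequality together with $w(0)=0$ forces $w\equiv 0$. I do not expect a genuine obstacle here: the problem is linear, decoupled, and the transport coefficient is essentially Lipschitz. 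The only care needed is bookkeeping — correctly incorporating the zeroth-order sources $\alpha f$, $\rho g$ into the DiPerna--Lions statements (usually phrased for pure transport) and checking that the flow and renormalization estimates survive the time regularity $\partial_x u \in L^1(0,T_0;L^\infty(\TT))$ rather than $L^\infty$ in time.
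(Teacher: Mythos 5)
Your proposal is correct and follows essentially the same route as the paper, which simply cites Proposition~II.1 and Corollary~II.2 of DiPerna--Lions; what you have done is reconstruct the content of that theory (flow map of the Lipschitz field $u$, representation along characteristics, uniform $L^\infty$ bounds, passage to the limit via stability of renormalized solutions, time-continuity in $L^p$, and uniqueness by renormalizing the difference plus Gronwall) rather than invoke it as a black box. The only point worth flagging is that you should make sure the version of DiPerna--Lions you invoke explicitly accommodates the $L^1$-in-time Lipschitz modulus and the zeroth-order source, but as you note this is routine bookkeeping, not a gap.
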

\begin{proof}
    See Proposition II.1 and Corollary II.2 in \cite{DiPerna}.
\end{proof}

Lemma~\ref{lem:App: Existence linearized BN} gives rise to a solution operator for any $T \in (0,T_0]$ via
    \begin{align*}
        \curlyL_{T}\colon L^\infty(0,T;L^\infty(\TT))^2 \to L^\infty(0,T;L^\infty(\TT))^2,
        \quad (f,g) \mapsto (\alpha,\rho),
    \end{align*}
    where $\alpha,\rho$ is the unique weak solution to problem $\eqref{lem:App: Existence linearized BN : Result 1}, \eqref{lem:App: Existence linearized BN : Result 2}$.
    This solution operator has the following properties:

\begin{lemma}\label{lem:App: Linear Solution Operator Properties}
        Under the assumptions of Lemma~\ref{lem:App: Existence linearized BN}, there exists some $C_0>0$ only depending on
        \begin{align*}
            ||\alpha_0||_{L^\infty(\TT)}, ||\rho_0||_{L^\infty(\TT)}, ||\partial_xu||_{L^1(0,T_0;L^\infty(\TT))},
        \end{align*}
        such that for any $M>0$ and any $T\in(0,T_0]$, we have 
        \begin{align}\label{lem:App: Linear Solution Operator Properties : Result 1}
            \curlyL_{T}( B(0,M) ) \subseteq B(0, C_0 \exp(T M) ),
        \end{align}
        where $B(0,M) \subseteq L^\infty(0,T;L^\infty(\TT))^2$ denotes the ball of radius $M$ in $L^\infty(0,T;L^\infty(\TT))^2$.\\
        Moreover, if $T \leq \min(T_0,1)$, we have for any $(\overline{f},\overline{g}), (\hat{f},\hat{g}) \in B(0,M)$ the inequality 
        \begin{align}\label{lem:App: Linear Solution Operator Properties : Result 2}
            || \curlyL_{T}(\overline{f},\overline{g}) - \curlyL_{T}(\hat{f},\hat{g}) ||_{L^\infty(0,T;L^1(\TT))}
            \leq
            L_{\curlyL}(T,M) || (\overline{f},\overline{g}) - (\hat{f}, \hat{g}) ||_{L^\infty(0,T;L^1(\TT))},
        \end{align}
        where
        \begin{align*}
            L_{\curlyL}(T,M) := T C_0 \exp(2M + C_0).
        \end{align*}
    \end{lemma}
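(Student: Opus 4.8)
The plan is to work entirely within the renormalized-solution framework of DiPerna--Lions, which applies here because $u \in L^\infty(0,T_0;L^\infty(\TT)) \cap L^2(0,T_0;H^1(\TT))$ with $\partial_x u \in L^1(0,T_0;L^\infty(\TT))$. Concretely, the weak solutions $\alpha,\rho$ of \eqref{lem:App: Existence linearized BN : Result 1}--\eqref{lem:App: Existence linearized BN : Result 2} are renormalized, so that for every $\beta \in C^1(\RR)$ the identities
\begin{align*}
    \partial_t \beta(\alpha) + u\,\partial_x\beta(\alpha) = \beta^\prime(\alpha)\,\alpha f,
    \qquad
    \partial_t \beta(\rho) + \partial_x(\beta(\rho)u) = \beta(\rho)\,\partial_x u + \beta^\prime(\rho)\,\rho\,(g - \partial_x u)
\end{align*}
hold in $\curlyD^\prime((0,T)\times\TT)$. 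All the bounds will follow by testing these identities against the constant function $1$, using $\TTint u\,\partial_x\beta(\cdot)\,\dd x = -\TTint\beta(\cdot)\,\partial_x u\,\dd x$, and applying Gronwall's inequality; one then only has to be generous with the multiplicative constants to match the stated form.

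For \eqref{lem:App: Linear Solution Operator Properties : Result 1} I would fix $(f,g)\in B(0,M)$, set $(\alpha,\rho):=\curlyL_T(f,g)$ and apply the above with $\beta(s)=|s|^p$ for an even integer $p\geq 2$, so that $\beta^\prime(s)s = p|s|^p\geq 0$. Testing against $1$ yields
\begin{align*}
    \frac{\dd}{\dd t}\TTint |\alpha|^p\,\dd x \leq \bigl(||\partial_x u(t)||_{L^\infty(\TT)} + pM\bigr)\TTint|\alpha|^p\,\dd x
\end{align*}
and the analogous inequality for $\TTint|\rho|^p\,\dd x$ with $(p+1)||\partial_x u(t)||_{L^\infty(\TT)}+pM$ in place of the bracket. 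Gronwall's inequality, taking $p$-th roots, and letting $p\to\infty$ (the torus has finite measure) then give $||\alpha(t)||_{L^\infty(\TT)}\leq ||\alpha_0||_{L^\infty(\TT)}e^{tM}$ and $||\rho(t)||_{L^\infty(\TT)}\leq ||\rho_0||_{L^\infty(\TT)}e^{||\partial_x u||_{L^1(0,T_0;L^\infty(\TT))}+tM}$. Taking $C_0 := \max\{1,||\alpha_0||_{L^\infty(\TT)},||\rho_0||_{L^\infty(\TT)}\}\exp\bigl(2||\partial_x u||_{L^1(0,T_0;L^\infty(\TT))}\bigr)$ establishes \eqref{lem:App: Linear Solution Operator Properties : Result 1} and, as a byproduct, the auxiliary estimate $||\partial_x u||_{L^1(0,T_0;L^\infty(\TT))}\leq \log C_0 \leq C_0$ that I will use below.

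For \eqref{lem:App: Linear Solution Operator Properties : Result 2} I would set $(\bar\alpha,\bar\rho):=\curlyL_T(\bar f,\bar g)$ and $(\hat\alpha,\hat\rho):=\curlyL_T(\hat f,\hat g)$ and observe that the differences $\delta\alpha:=\bar\alpha-\hat\alpha$, $\delta\rho:=\bar\rho-\hat\rho$ solve the same linear equations, now with zero initial data and source terms $\bar\alpha(\bar f-\hat f)+\delta\alpha\,\hat f$ and $\bar\rho(\bar g-\hat g)+\delta\rho\,\hat g$. Renormalizing with a $C^1$-regularization of $s\mapsto|s|$, passing to the limit, testing against $1$, and using $T\leq 1$ together with the first part (so $||\bar\alpha||_{L^\infty(0,T;L^\infty(\TT))},||\bar\rho||_{L^\infty(0,T;L^\infty(\TT))}\leq C_0 e^M$) and $||\hat f||_{L^\infty},||\hat g||_{L^\infty}\leq M$, one obtains
\begin{align*}
    \frac{\dd}{\dd t}||\delta\alpha(t)||_{L^1(\TT)} \leq \bigl(||\partial_x u(t)||_{L^\infty(\TT)}+M\bigr)||\delta\alpha(t)||_{L^1(\TT)} + C_0 e^M||\bar f(t)-\hat f(t)||_{L^1(\TT)},
\end{align*}
and the same inequality for $||\delta\rho(t)||_{L^1(\TT)}$ with $2||\partial_x u(t)||_{L^\infty(\TT)}$ in place of $||\partial_x u(t)||_{L^\infty(\TT)}$ and with $\bar g,\hat g$ in place of $\bar f,\hat f$ (the extra factor $2$ coming from the conservative form of the $\rho$-equation). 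Since $\delta\alpha(0)=\delta\rho(0)=0$, Gronwall's inequality, the bounds $e^{2\int_0^t||\partial_x u||_{L^\infty(\TT)}}\leq e^{2||\partial_x u||_{L^1(0,T_0;L^\infty(\TT))}}\leq C_0$, $e^{tM}\leq e^M$, $C_0\leq e^{C_0}$, and $\int_0^t||\cdot||_{L^1(\TT)}\,\dd s\leq T||\cdot||_{L^\infty(0,T;L^1(\TT))}$ then yield \eqref{lem:App: Linear Solution Operator Properties : Result 2} with $L_{\curlyL}(T,M)=T C_0\exp(2M+C_0)$.

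I expect the only genuinely delicate point to be the renormalization machinery itself: justifying the chain-rule identities in $\curlyD^\prime$ for merely $L^\infty$ solutions of the linear transport/continuity equations, and justifying the uniform-in-$p$ limit $p\to\infty$ in the $L^\infty$-bound. Both are standard consequences of the DiPerna--Lions theory under the assumed regularity of $u$, so that everything that remains is routine Gronwall bookkeeping, carried out generously in the constants.
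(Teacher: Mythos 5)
Your argument is correct and is essentially the standard DiPerna--Lions renormalization plus Gronwall argument that the paper itself delegates to Lemma~2 of \cite{Hill_Note} without reproducing it; the $L^\infty$ bound via $\beta(s)=|s|^p$ with $p\to\infty$ and the $L^1$-contraction via renormalizing with a regularization of $|s|$ both go through under the stated regularity of $u$. One small remark: in the $\delta\rho$ estimate the ``extra factor~$2$ from the conservative form'' is actually unnecessary, since writing $\partial_t|\delta\rho| + \partial_x(|\delta\rho|\,u) = \mathrm{sign}(\delta\rho)\,\bar\rho(\bar g-\hat g) + |\delta\rho|\,\hat g$ and integrating over $\TT$ makes the $\partial_x u$ contribution vanish entirely; your weaker bound still yields $L_{\curlyL}(T,M)$ via $C_0\le e^{C_0}$, so this is an overestimate rather than an error.
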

    \begin{proof}
        The proof follows completely the lines of Lemma 2 in \cite{Hill_Note} without any further adjustments, therefore we will not repeat it here.
    \end{proof}

As a next step, we compute the right hand sides of $\eqref{lem:App: Existence linearized BN : Result 1}$ via the nonlinear mapping
    \begin{align*}
        \curlyN \curlyL_{T}\colon L^\infty(0,T;L^\infty(\TT)) \to L^\infty(0,T;L^\infty(\TT)),
        \quad (\alpha,\rho) \mapsto (f,g),
    \end{align*}
    where
    \begin{align*}
        f := \Parti(\rho) - \pi, \quad g := \pi - \Parti(\rho).
    \end{align*}

    Notice that, since $(P,\gamma)$ is admissible (see condition \ref{condition_two_admissible_pressure} in Definition~\ref{Defi:Admissible Pressure Function}), there exist two constants $C_P \in (0,\infty)$ and $\beta \in [2,\infty)$, such that we have
    \begin{align}\label{Growth for P}
        P^\prime(r) \leq C_P + C_P r^{\beta -1}, \quad P(r) \leq C_P + C_P r^\beta \quad \forall \, r \in [0,\infty).
    \end{align}
    We will fix these constants for the rest of this section.
    We obtain the following properties for the mapping $\curlyN \curlyL_{T}$, which are exactly the ones from \cite{Hill_Note} adjusted to our pressure function:

    \begin{lemma}\label{lem:App: Nonlinear Solution Operator Properties}
        Under the assumptions of Lemma~\ref{thm:App:Existence BN Solutions}, there exists some $C_1>0$ only depending on
        \begin{align*}
            ||\pi||_{L^\infty(0,T_0;L^\infty(\TT))}, \gamma, C_P,
        \end{align*}
        such that for any $T\in(0,T_0]$, and any $R>0$, we have
        \begin{align}\label{lem:App: Nonlinear Solution Operator Properties : Result 1}
            \curlyN \curlyL_{T} ( B(0,R) ) \subseteq B(0, C_1(1 + R^2 + R^\beta) ),
        \end{align}
        where $B(0,R)$ denotes the ball of radius $R$ in $L^\infty(0,T;L^\infty(\TT))^2$.\\
        Moreover, we have for any $(\overline{\alpha},\overline{\rho}), (\hat{\alpha},\hat{\rho}) \in B(0,R)$ the inequality
        \begin{align}\label{lem:App: Nonlinear Solution Operator Properties : Result 2}
            || \curlyN \curlyL_{T} (\overline{\alpha}, \overline{\rho}) - \curlyN \curlyL_T (\hat{\alpha}, \hat{\rho}) ||_{L^\infty(0,T;L^1(\TT))}
            \leq
            L_{\curlyN \curlyL}(R) || (\overline{\alpha}, \overline{\rho}) - (\hat{\alpha}, \hat{\rho}) ||_{L^\infty(0,T;L^1(\TT))},
        \end{align}
        where
        \begin{align*}
            L_{\curlyN \curlyL}(R) := C_1( 1 + R +R^{\beta - 1}).
        \end{align*}
    \end{lemma}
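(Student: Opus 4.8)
The plan is to observe that $\curlyN\curlyL_{T}$ is independent of its first argument: by construction it sends $(\alpha,\rho)$ to $\bigl(\Parti(\rho)-\pi,\ \pi-\Parti(\rho)\bigr)$, so both assertions reduce to elementary pointwise bounds for $\Parti$ and $\Parti^\prime$ on balls of $L^\infty$, which we read off from the growth conditions \eqref{Growth for P} guaranteed by the admissibility of $(P,\gamma)$ (condition \ref{condition_two_admissible_pressure} in Definition~\ref{Defi:Admissible Pressure Function}). Throughout, $\Parti(r)=P(r)+\tfrac{\gamma}{2}r^2$ as in \eqref{artificial pressure}, so $\Parti\in C^1$.

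For the inclusion \eqref{lem:App: Nonlinear Solution Operator Properties : Result 1}, I would fix $(\alpha,\rho)\in B(0,R)$, hence $\|\rho\|_{L^\infty(0,T;L^\infty(\TT))}\le R$, and estimate pointwise
\begin{align*}
    |\Parti(\rho)-\pi|
    \le |P(\rho)| + \tfrac{\gamma}{2}|\rho|^2 + |\pi|
    \le C_P + C_P R^\beta + \tfrac{\gamma}{2} R^2 + \|\pi\|_{L^\infty(0,T_0;L^\infty(\TT))},
\end{align*}
using the second inequality in \eqref{Growth for P}; the component $g=\pi-\Parti(\rho)$ obeys the same bound. Taking $C_1$ depending only on $\|\pi\|_{L^\infty(0,T_0;L^\infty(\TT))}$, $\gamma$ and $C_P$, the right-hand side is at most $C_1(1+R^2+R^\beta)$, which is exactly \eqref{lem:App: Nonlinear Solution Operator Properties : Result 1}.

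For the Lipschitz bound \eqref{lem:App: Nonlinear Solution Operator Properties : Result 2}, I would use
\begin{align*}
    \curlyN\curlyL_T(\overline\alpha,\overline\rho) - \curlyN\curlyL_T(\hat\alpha,\hat\rho)
    = \bigl(\Parti(\overline\rho)-\Parti(\hat\rho),\ \Parti(\hat\rho)-\Parti(\overline\rho)\bigr),
\end{align*}
so it suffices to control $\|\Parti(\overline\rho)-\Parti(\hat\rho)\|_{L^\infty(0,T;L^1(\TT))}$. Since $|\overline\rho|,|\hat\rho|\le R$ and $\Parti\in C^1$, the mean value theorem gives for a.e. $(t,x)$
\begin{align*}
    |\Parti(\overline\rho(t,x))-\Parti(\hat\rho(t,x))|
    \le \Bigl(\sup_{0\le s\le R}|\Parti^\prime(s)|\Bigr)\,|\overline\rho(t,x)-\hat\rho(t,x)|,
\end{align*}
and by the first inequality in \eqref{Growth for P},
\begin{align*}
    \sup_{0\le s\le R}|\Parti^\prime(s)|
    \le \sup_{0\le s\le R}\bigl(|P^\prime(s)| + \gamma s\bigr)
    \le C_P + C_P R^{\beta-1} + \gamma R.
\end{align*}
Integrating over $\TT$, taking the supremum in $t$, and bounding $\|\overline\rho-\hat\rho\|_{L^\infty(0,T;L^1(\TT))}$ by $\|(\overline\alpha,\overline\rho)-(\hat\alpha,\hat\rho)\|_{L^\infty(0,T;L^1(\TT))}$, yields \eqref{lem:App: Nonlinear Solution Operator Properties : Result 2} with $L_{\curlyN\curlyL}(R)=C_1(1+R+R^{\beta-1})$, after possibly enlarging $C_1$ to absorb the factor coming from the product norm.

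The proof is essentially bookkeeping and I do not expect a genuine obstacle; the one point to be careful about is that elements of $B(0,R)$ need not be nonnegative while $P$, and hence $\Parti$, is a priori only defined on $[0,\infty)$. I would dispose of this either by working, as in \cite{Hill_Note}, on the closed subset of $B(0,R)$ on which $\rho\ge 0$ (which is where the fixed-point iteration actually lives, thanks to the positivity preserved by $\curlyL_T$), or by fixing once and for all a $C^1$-extension of $P$ to $\RR$ with the same growth \eqref{Growth for P}; either choice makes all the suprema above finite and legitimate and does not affect the constants.
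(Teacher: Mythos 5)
Your proof takes essentially the same route as the paper's: observing that $\curlyN\curlyL_T$ depends only on the second argument, applying the growth bounds \eqref{Growth for P} pointwise for the inclusion, and the mean value theorem together with the derivative bound for the Lipschitz estimate. Your closing remark about $\Parti$ being defined a priori only on $[0,\infty)$ is a valid observation that the paper glosses over (its proof writes $\max_{\lambda\in[0,R]}|\Parti'(\lambda)|$, tacitly assuming $\rho\ge 0$), and either of your proposed fixes — restricting to the positivity-preserving subset on which the iteration actually lives, or extending $P$ to $\RR$ with the same growth — is adequate.
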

    \begin{proof}
        For $(\alpha,\rho) \in B(0,R)$ and $ \curlyN \curlyL_{T}(\alpha,\rho) = (f,g)$, i.e.
        \begin{align*}
            f = \pi - \Parti(\rho), \quad g = \Parti(\rho) - \pi,
        \end{align*}
        we estimate using the relation $\eqref{Growth for P}$
        \begin{align*}
            ||f||_{L^\infty(0,T,L^\infty(\TT))}
            &\leq
            ||\pi||_{L^\infty(0,T_0;L^\infty(\TT))} + ||P(\rho)||_{L^\infty(0,T,L^\infty(\TT))} + \frac{\gamma}{2} ||\rho||^2_{L^\infty(0,T;L^\infty(\TT))}\\
            &\leq
            ||\pi||_{L^\infty(0,T_0;L^\infty(\TT))} + C_P + C_P||\rho||^\beta_{L^\infty(0,T;L^\infty(\TT))} \\
            &\qquad+ \frac{\gamma}{2} ||\rho||^2_{L^\infty(0,T;L^\infty(\TT))}\\
            &\leq \max\Bigl\{||\pi||_{L^\infty(0,T_0;L^\infty(\TT))}+C_P, \frac{\gamma}{2} \Bigr\}\bigl( 1 + R^2 + R^\beta\bigr).
        \end{align*}
        The estimate for $g$ is completely the same. 
        To verify assertion $\eqref{lem:App: Nonlinear Solution Operator Properties : Result 2}$, we fix $(\overline{\alpha},\overline{\rho}), (\hat{\alpha}, \hat{\rho}) \in B(0,R)$ and denote
        \begin{align*}
            \curlyN \curlyL_{T}(\overline{\alpha},\overline{\rho}) = (\overline{f},\overline{g}),
            \quad 
            \curlyN \curlyL_{T}(\hat{\alpha}, \hat{\rho}) = (\hat{f}, \hat{g}).
        \end{align*}
        Then we estimate for a.e. $t \in [0,T]$ using the mean value theorem and relation $\eqref{Growth for P}$
        \begin{align*}
            \TTint |\overline{f}(t,x) - \hat{f}(t,x)|\,\dd x
            &=
            \TTint |\Parti(\overline{\rho}(t,x)) - \Parti(\hat{\rho}(t,x))| \,\dd x\\
            &\leq 
            \max\limits_{\lambda \in [0,R]} \bigl\{|\Parti^\prime(\lambda)| \bigr\} \TTint |\overline{\rho}(t,x)-  \hat{\rho}(t,x) | \, \dd x\\
            &\leq
            \bigl(C_P + C_P R^{\beta-1} + \gamma R\bigr) \TTint |\overline{\rho}(t,x) - \hat{\rho}(t,x) | \, \dd x\\
            &\leq
            \max\bigl\{C_P,\gamma\bigr\}\bigl( 1 + R + R^{\beta-1} \bigr) \TTint |\overline{\rho}(t,x) - \hat{\rho}(t,x) |\, \dd x.
        \end{align*}
        The estimate for the difference between $\overline{g}$ and $\hat{g}$ is completely the same.
        Hence, setting 
        \begin{align*}
            C_1 := \max\bigl\{||\pi||_{L^\infty(0,T_0;L^\infty(\TT))}+C_p, \gamma\bigr\} 
        \end{align*}
        yields the claim.
    \end{proof}

    Finally, we provide the proof of Theorem~\ref{thm:App:Existence BN Solutions} via a fixed point argument.

    \begin{proof}[Proof of Theorem~\ref{thm:App:Existence BN Solutions}]
        For $T \in (0,T_0]$, we define the map
        \begin{align*}
            \Phi_{T}:= \curlyL_{T} \circ \curlyN \curlyL_{T}\colon 
            L^\infty(0,T,L^\infty(\TT))^2 \to L^\infty(0,T;L^\infty(\TT))^2.
        \end{align*}
        By definition of $\curlyL_{T}$ and $ \curlyN \curlyL_{T}$, any fixed point $(\alpha,\rho) \in L^\infty(0,T;L^\infty(\TT))^2$ of $\Phi_{T}$ satisfies $\alpha,\rho \in C([0,T];L^1(\TT))$ and $\eqref{thm:App:Existence BN Solutions : Result 1}, \eqref{thm:App:Existence BN Solutions : Result 3}$, so we are done with the first part of Theorem~\ref{thm:App:Existence BN Solutions}, if we show that there exists some $T_1 \in (0,T_0]$, such that the map $\Phi_{T_1}$ admits a fixed point. 
        To do this, we define
        \begin{align*}
            R_\star := 2 C_0, \quad T_\star:=\min\biggl(\frac{\ln(2)}{C_1(1 + (2C_0)^\beta + (2C_0)^2)}, 1, T_0\biggr),
        \end{align*}
        where $C_0$ and $C_1$ are the constants provided by Lemma~\ref{lem:App: Linear Solution Operator Properties} and Lemma~\ref{lem:App: Nonlinear Solution Operator Properties}, respectively.
        With $\eqref{lem:App: Nonlinear Solution Operator Properties : Result 1}$ and $\eqref{lem:App: Linear Solution Operator Properties : Result 1}$ we obtain
        \begin{align*}
            \curlyL_{T_\star} (\curlyN \curlyL_{T_\star} (B(0,R_\star)))
            \subseteq 
            B(0, C_0 \exp(T_\star C_1 (1 + R_\star^\beta + R_\star^2))).
        \end{align*}
        Since 
        \begin{align*}
            \exp(T_\star C_1 (1 + R_\star^\beta + R_\star^2))
            \leq
            \exp\bigl( \frac{C_1(1 + (2C_0)^\beta + (2C_0)^2)\ln(2)}{C_1(1+  (2C_0)^\beta + (2C_0)^2 )} \bigr)= 2,
        \end{align*}
        we have shown that
        \begin{align*}
            \Phi_{T_\star}(B(0,R_\star)) \subseteq B(0,R_\star),
        \end{align*}
        and in particular
        \begin{align}\label{thm:App: Existence BN Solutions : Proof 1}
            \Phi_{T}(B(0,R_\star)) \subseteq B(0,R_\star) \quad \forall \, T \in (0,T_\star].
        \end{align}
        By $\eqref{lem:App: Linear Solution Operator Properties : Result 2}$ and $\eqref{lem:App: Nonlinear Solution Operator Properties : Result 2}$, we have for any $(\overline{\alpha},\overline{\rho}), (\hat{\alpha}, \hat{\rho}) \in B(0,R_\star)$ and any $T \in (0,T_\star]$ that
        \begin{align*}
            &||\Phi_{T}(\overline{\alpha},\overline{\rho}) - \Phi_{T}(\hat{\alpha},\hat{\rho})||_{L^\infty(0,T,L^1(\TT))}\\
            &\leq
            L_{\curlyL}(T,C_1(1 + R_\star^2 + R_\star^\beta)) L_{\curlyN \curlyL}(R_\star) ||(\overline{\alpha},\overline{\rho}) - (\hat{\alpha},\hat{\rho})||_{L^\infty(0,T,L^1(\TT))}\\
            &=
            T C_0 \exp(2C_1(1 + R_\star^2 + R_\star^\beta) + C_0)C_1 (1 + R_\star^{2} + R_\star^{\beta-1}) ||(\overline{\alpha},\overline{\rho}) - (\hat{\alpha},\hat{\rho})||_{L^\infty(0,T;L^1(\TT))}.
        \end{align*}
        Thus, we can choose $T_1 \in (0,T_\star]$ so small, such that
        \begin{align}\label{thm:App: Existence BN Solutions : Proof 2}
            ||\Phi_{T_1}(\overline{\alpha},\overline{\rho}) - \Phi_{T_1}(\hat{\alpha},\hat{\rho})||_{L^\infty(0,T_1,L^1(\TT))}
            \leq
            \frac{1}{2}||(\overline{\alpha},\overline{\rho}) - (\hat{\alpha},\hat{\rho})||_{L^\infty(0,T_1,L^1(\TT))}.
        \end{align}
        We construct a sequence in $B(0,R_\star)$ via the recursion
        \begin{align*}
            (\alpha_{n+1},\rho_{n+1}) := \Phi_{T_1}(\alpha_n, \rho_n) \quad \forall \, n \in \NN_{0},
        \end{align*}
        where we consider for $n=0$ the initial data as functions in $B(0,R_\star)$ that are constant in time, so $(\alpha_0,\rho_0)(t):=(\alpha_0,\rho_0)$ for all $t \in [0,T_1]$. 
        By virtue of $\eqref{thm:App: Existence BN Solutions : Proof 1}$, we have
        \begin{align}\label{thm:App: Existence BN Solutions : Proof 3}
            (\alpha_n,\rho_n) \in B(0,R_\star) \quad \forall\, n \in \NN.
        \end{align}
        Due to $\eqref{thm:App: Existence BN Solutions : Proof 2}$, the sequence $(\alpha_n,\rho_n)_{n\in\NN}$ is a Cauchy sequence in $L^\infty(0,T_1,L^1(\TT))$ and therefore there exists some $(\alpha,\rho) \in L^\infty(0,T_1,L^1(\TT))$, such that
        \begin{align*}
            (\alpha_n,\rho_n) \longrightarrow (\alpha,\rho) \quad \text{in } L^\infty(0,T_1,L^1(\TT)).
        \end{align*}
        Relation $\eqref{thm:App: Existence BN Solutions : Proof 3}$ implies $(\alpha,\rho) \in L^\infty(0,T_1,L^\infty(\TT))$. 
        Finally we verify that $(\alpha,\rho)$ is a fixed point of the map $\Phi_{T_1}$. We estimate for any $n \in \NN$
        \begin{align*}
            &||\Phi_{T_1}(\alpha,\rho) - (\alpha,\rho)||_{L^\infty(0,T_1;L^1(\TT))}\\
            &\leq
            ||\Phi_{T_1}(\alpha,\rho) - \Phi_{T_1}(\alpha_n,\rho_n)||_{L^\infty(0,T_1;L^1(\TT))} 
            + 
            ||\Phi_{T_1}(\alpha_n,\rho_n) - (\alpha,\rho)||_{L^\infty(0,T_1;L^1(\TT))}\\
            &\leq
            \frac{1}{2} ||(\alpha,\rho) - (\alpha_{n},\rho_{n})||_{L^\infty(0,T_1;L^1(\TT))}
            +
            ||(\alpha_{n+1},\rho_{n+1}) - (\alpha,\rho)||_{L^\infty(0,T_1;L^1(\TT))}.
        \end{align*}
       For $n \to \infty$, the right-hand side of this inequality converges to zero and therefore
        \begin{align*}
            ||\Phi_{T_1}(\alpha,\rho) - (\alpha,\rho)||_{L^\infty(0,T_1;L^1(\TT))} = 0,
        \end{align*}
        which gives in particular $\Phi(\alpha,\rho) = (\alpha,\rho)$ a.e. in $(0,\tildeTT)\times \TT$, so that $(\alpha,\rho)$ is a fixed point of $\Phi_{T_1}$.
        To prove assertion $\eqref{thm:App:Existence BN Solutions : Result 2}$, we notice that the velocity field has the regularity $u \in L^1(0,T_0;W^{1,\infty}(\TT))$. 
        This provides us a Lipschitz continuous flow map
        \begin{align*}
            X\colon [0,T_0] \times [0,T_0] \times \TT \to \RR.
        \end{align*}
        Then, it is well-known (see for instance \cite{MANIGLIA2007601} in a more general context), that we can represent $\alpha$ and $\rho$ via
        \begin{align*}
            \alpha(t,x) = \alpha_0(X(0,t,x)) \exp\biggl(\int_0^t f(s,X(s,t,x)) \, \dd s\biggr)
        \end{align*}
        and
        \begin{align*}
            \rho(t,x) = \rho_0(X(0,t,x)) \exp\biggl(\int_0^t g(s,X(s,t,x)) \, \dd s\biggr)
        \end{align*}
        for almost all $(t,x) \in [0,T_1]\times \TT$, where
        \begin{align*}
            f := \Parti(\rho) - \pi, 
            \quad
            g:= -\partial_x u + \pi - \Parti(\rho).
        \end{align*}
        Since $\alpha_0\geq 0$ almost everywhere on $\TT$, we conclude $\alpha \geq 0$ almost everywhere on $[0,T_1] \times \TT$.
        Moreover, we estimate the density using the bound on $\rho_0$ as
        \begin{align*}
            M_0^{-1} \exp\biggl(\int_0^t -||g(s,\cdot)||_{L^\infty(\TT)}\, \dd s\biggr)
            \leq
            \rho(t,x)
            \leq
            M_0 \exp\biggl(\int_0^t ||g(s,\cdot)||_{L^\infty(\TT)} \,\dd s\biggr)
        \end{align*}
        for almost all $(t,x) \in [0,T_1]\times \TT$.
        Since $g \in L^1(0,T_1;L^\infty(\TT))$, we can restrict $T_1$ if necessary to obtain assertion $\eqref{thm:App:Existence BN Solutions : Result 2}$.
    \end{proof}

\section{A Uniqueness Result}\label{A Uniqueness Result}

The following uniqueness result is a variation of the uniqueness result provided in \cite{Hillairet_New_Physical}.
The proof follows a duality argument that can also be found in \cite{MANIGLIA2007601} for instance.
The precise formulation of the uniqueness result reads as follows:

\begin{theorem}\label{thm:App:Uniqueness}
    Let $T>0$. Let $\mathbf{V}=(V_1,V_2) \in L^1(0,T;C_b(\TT \times \RR))^2$ and $g \in L^1(0,T;C_b(\TT \times \RR))$ with
    \begin{align}
        \partial_1 V_1 \in L^1(0,T;L^\infty(\TT \times \RR)), 
        \quad 
        \partial_2 V_1 =0 \text{ a.e. in } (0,T)\times\TT\times\RR,\label{Uniqueness Eq2}\\
        \int_0^T \TTint \sup\limits_{x_2 \in \RR} |\partial_1 V_2(t,x_1,x_2)| \,\dd x_1 \,\dd t < \infty,\:
        \partial_2 V_2 \in L^1(0,T;L^\infty(\TT \times \RR)),\label{Uniqueness Eq3}\\
        \int_0^T \TTint \sup\limits_{x_2 \in \RR} |\partial_1 g(t,x_1,x_2)|\, \dd x_1\, \dd t <\infty,\:
        \partial_2 g \in L^1(0,T;L^\infty(\TT \times \RR))\label{Uniqueness Eq5}.
    \end{align}
    Assume that $\nu \in \Cw([0,T];\mathcal{M}(\TT \times \RR))$ satisfies
    \begin{align}\label{support of nu}
        \sup\limits_{t \in [0,T_0]} ||\nu(t)||_{\curlyM(\TT \times \RR)} < \infty, \quad \mathrm{spt}\, \bigl(\nu(t) \bigr) \subseteq \TT \times K \quad \forall\,  t \in [0,T],
    \end{align}
    for some compact subset $K \subseteq \RR$ and
    \begin{align}\label{thm:App:Uniqueness:Result 1}
        \partial_t \nu 
        +
        \mathrm{div}_{(x_1,x_2)}(\mathbf{V}\nu)
        +
        g \nu
        &= 0 
        \quad \text{in } \mathcal{D}^\prime((0,T)\times\TT\times\RR),
    \end{align}
    with $\nu(0) = 0$.\\
    Then we have $\nu \equiv 0$.
\end{theorem}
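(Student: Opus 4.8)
The statement is a uniqueness theorem for measure-valued solutions of a linear transport equation with a zeroth-order term, and the natural strategy is a duality argument: test $\nu$ against solutions of the adjoint (backward) transport equation. Concretely, fix $T_\ast \in (0,T]$ and an arbitrary $\psi \in C^\infty_c(\TT \times \RR)$, and let $\varphi$ solve the backward problem
\begin{align*}
    \partial_t \varphi + \mathbf{V}\cdot \nabla_{(x_1,x_2)} \varphi - g\varphi = 0 \quad \text{in } (0,T_\ast)\times\TT\times\RR, \qquad \varphi(T_\ast,\cdot) = \psi.
\end{align*}
The point of choosing the adjoint equation in this form is that, plugging $\varphi$ as a test function into the weak formulation \eqref{thm:App:Uniqueness:Result 1} of $\nu$ and using $\nu(0)=0$, all the terms cancel and one is left with $\langle \nu(T_\ast), \psi \rangle = 0$. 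Since $\psi$ and $T_\ast$ are arbitrary and $C^\infty_c(\TT\times\RR)$ is dense in $C^0_0(\TT\times\RR)$, this gives $\nu \equiv 0$.

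**Key steps, in order.** First I would construct $\varphi$ via the method of characteristics: the vector field $\mathbf{V}$ generates (for a.e. time) a Lipschitz flow because of the regularity hypotheses \eqref{Uniqueness Eq2}--\eqref{Uniqueness Eq3} (the crucial structural point is $\partial_2 V_1 = 0$, so the $x_1$-characteristic decouples and is genuinely Lipschitz in $x_1$, after which the $x_2$-equation is a linear ODE with $L^1$-in-time, Lipschitz-in-$x_2$ coefficients). This is exactly the DiPerna--Lions / ODE-flow setting, and one gets $\varphi(t,x) = \psi\big(X(T_\ast,t,x)\big)\exp\!\big(\int_t^{T_\ast} g(s,X(s,t,x))\,\dd s\big)$, which is Lipschitz in space, bounded, and with the time regularity needed to be an admissible test function — modulo a mollification step. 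Second, because $\nu(t)$ is supported in the fixed compact set $\TT\times K$, I can multiply $\varphi$ by a cutoff $\chi(x_2)$ equal to $1$ on a neighborhood of $K$ without changing any pairing $\langle \nu(t), \cdot\rangle$; this legitimizes using a compactly supported test function even though $\varphi$ itself need not decay. Third, I would regularize: mollify $\nu$ in $(x_1,x_2)$, or mollify the coefficients and $\varphi$, to turn the distributional identity into a classical one, derive $\frac{\dd}{\dd t}\langle \nu(t), \varphi(t,\cdot)\chi\rangle = (\text{error terms}\to 0)$, and pass to the limit using \eqref{support of nu} and the $L^1$-in-time bounds on the coefficients. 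Integrating from $0$ to $T_\ast$ and using $\nu(0)=0$ yields $\langle \nu(T_\ast), \psi\rangle = 0$. Finally, density of test functions plus weak continuity of $t\mapsto\nu(t)$ gives $\nu(T_\ast)=0$ for every $T_\ast$.

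**Main obstacle.** The genuinely delicate part is the regularization/commutator step: $\nu$ is only a measure (in space) and only weakly continuous in time, while $\mathbf{V}$ and $g$ have merely $L^1$-in-time, bounded-and-Lipschitz-ish spatial regularity, so one cannot simply multiply the PDE for $\nu$ by $\varphi$ and integrate by parts. One must either mollify $\nu$ spatially and control the DiPerna--Lions commutator $[\mathbf{V}\cdot\nabla, \rho_\varepsilon\ast] \nu \to 0$ — which is where hypotheses \eqref{Uniqueness Eq2}--\eqref{Uniqueness Eq5} are used, in particular the one-sided sup-in-$x_2$ integrability of $\partial_1 V_2$ and $\partial_1 g$, which compensates for the lack of compactness/decay in the $x_2$-variable — or, more cleanly, mollify the coefficients, solve the approximate backward problem classically, and estimate the defect. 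Either way, the $L^1(0,T)$-in-time (rather than $L^\infty$) integrability of the coefficients forces a Grönwall argument with an $L^1$ kernel, and the non-decay in $x_2$ must be handled by the cutoff $\chi$ together with the compact-support property \eqref{support of nu}. Everything else — the characteristic construction, the cancellation in the duality pairing, the final density argument — is routine once the commutator estimate is in place, and the reference to \cite{MANIGLIA2007601} covers the flow-map and representation-formula technology.
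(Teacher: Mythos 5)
Your proposal follows the same duality strategy as the paper: test $\nu$ against solutions of the adjoint backward advection equation, regularize, and let the mollification parameter go to zero. The paper implements exactly the ``cleaner'' variant you mention: it mollifies the coefficients $\mathbf{V},g$ to $\mathbf{V}^\varepsilon,g^\varepsilon$, solves the backward problem classically for $\varphi^\varepsilon$, and uses $\varphi^\varepsilon$ as a test function to obtain
\begin{align*}
    \langle\nu(t),\varphi^\sharp\rangle = \int_0^t\langle\nu(s),\,(\mathbf{V}-\mathbf{V}^\varepsilon)\cdot\nabla\varphi^\varepsilon - (g-g^\varepsilon)\varphi^\varepsilon\rangle\,\dd s,
\end{align*}
so the whole proof reduces to showing the right-hand side vanishes as $\varepsilon\to 0$.

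Where your sketch goes astray is in the explanation of what the anisotropic hypotheses \eqref{Uniqueness Eq2}--\eqref{Uniqueness Eq5} are for. You attribute them to ``compensating for the lack of compactness/decay in the $x_2$-variable,'' but that job is done entirely by the compact-support assumption \eqref{support of nu} together with a cutoff. What the anisotropy actually governs is a quantitative balance you do not mention: because $\partial_1 V_2$ and $\partial_1 g$ are only controlled through $\int_0^T\int_\TT\sup_{x_2}|\cdot|$, the derivative $\partial_1\varphi^\varepsilon$ of the mollified adjoint solution is \emph{not} uniformly bounded --- the paper only gets $\|\partial_1\varphi^\varepsilon\|_{L^\infty}\lesssim \varepsilon^{-1/2}$. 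The estimate still closes because $V_1$ is independent of $x_2$ with $\nabla V_1\in L^1(0,T;L^\infty)$, so the mollification defect $\|V_1-V_1^\varepsilon\|_{L^1(0,T;L^\infty)}=O(\varepsilon)$, and the product $O(\varepsilon)\cdot O(\varepsilon^{-1/2})\to 0$. (The other two error terms, involving $\partial_2\varphi^\varepsilon$ and $\varphi^\varepsilon$ itself, are uniformly bounded by the maximum principle since $\partial_2 V_2,\partial_2 g\in L^1(0,T;L^\infty)$.) This trade-off --- a degenerating derivative bound beaten by a fast-decaying mollification error, made possible by $\partial_2 V_1=0$ --- is the actual crux, and calling the remainder ``routine once the commutator estimate is in place'' papers over the one step where the proof could fail. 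If you sat down to carry out your plan without spotting this cancellation, the $I_1$ term in the error estimate would appear to be $O(1)$ and the argument would stall.
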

    \begin{proof}
    We proceed as in \cite{Hillairet_New_Physical} with a duality argument. 
    The measure $\nu$ satisfying $\eqref{thm:App:Uniqueness:Result 1}$, means that for any $\varphi \in C^\infty_c((0,T) \times \TT \times \RR)$, we have 
    \begin{align*}
        \int_0^T \langle \nu(t), \partial_t \varphi + u \cdot \nabla \varphi - g \varphi \rangle \,\dd t = 0.
    \end{align*}
    By a standard regularization argument, this implies that for any $t \in [0,T]$ and any $\varphi \in C^1_c([0,t]\times \TT \times \RR)$, we have
    \begin{align}\label{thm:App:Uniqueness:Proof 1}
        \langle \nu(t), \varphi(t,\cdot) \rangle 
        =
        \int_0^t 
        \langle 
        \nu(s),
        \partial_t \varphi + \mathbf{V} \cdot \nabla \varphi - g \varphi 
        \rangle
        \,\dd s
        = 0.
    \end{align}
    We take a standard mollifier on $\RR$, i.e. 
    \begin{align*}
        \omega \in C^\infty_c((0,1)), \quad
        \int_{\RR} \omega = 1, \quad
        0 \leq \omega \leq 1,\\
        \omega_\varepsilon(x) := \frac{1}{\varepsilon} \omega(\frac{x}{\varepsilon}) \quad \text{for } \varepsilon>0, \: x \in \RR,
    \end{align*}
    and mollify $u$ and $g$ via
    \begin{align*}
        \mathbf{V}^\varepsilon(t,x,\xi)
        =
        \int_\RR \int_{\RR} \omega_\varepsilon(x-y) \omega_\varepsilon(\xi-\eta) \mathbf{V}(t,y,\eta) \,\dd y\, \dd \eta,\\
        g^{\varepsilon}(t,x,\xi)
        =
        \RRint \RRint \omega_\varepsilon(x-y) \omega_\varepsilon(\xi - \eta) g(t,y,\eta) \, \dd y \, \dd \eta.
    \end{align*}
    Then we have for any $k \in \NN$ that
    \begin{align*}
        \mathbf{V}^\varepsilon,g^\varepsilon \in L^1(0,T;C^k_b(\TT \times \RR)).
    \end{align*}
    Using that $g \in L^1(0,T;C_b(\TT \times \RR))$, we have for almost all $t \in [0,T]$ the convergence 
    \begin{align*}
        ||g^\varepsilon(t) - g(t) ||_{L^\infty(\TT \times K)} \to 0 \quad \text{for } \varepsilon \to 0,
    \end{align*}
    Together with
    \begin{align}\label{inserted estimate}
        \int_0^T ||g_\varepsilon(t)||_{L^\infty(\TT \times \RR)}\, \dd t 
        \leq
        \int_0^T ||g(t)||_{L^\infty(\TT \times \RR)} \, \dd t,
    \end{align}
    this implies using Lebesgue's dominated convergence theorem
    \begin{align}\label{thm:App:Uniqueness:Proof 3}
        ||g^\varepsilon - g||_{L^1(0,T;L^\infty(\TT \times K))} \to 0 \quad \text{for } \varepsilon \to 0.
    \end{align}
    By the same reasoning, we obtain
    \begin{align}\label{thm:App:Uniqueness:Proof 4}
        ||\mathbf{V}^\varepsilon - \mathbf{V}||_{L^1(0,T;L^\infty(\TT \times K))} \to 0 \quad \text{for } \varepsilon \to 0.
    \end{align}
    From now on, we denote by $\mathcal{C}$ a generic constant that may vary from line to line but does not depend on $\varepsilon$.
    By virtue of $\nabla V_1 \in L^1(0,T;L^\infty(\TT \times \RR))$, we obtain for almost all $t \in [0,T]$ and any $x,\xi \in \RR$
    \begin{align*}
        |V_1^\varepsilon(t,x,\xi) - V_1(t,x,\xi)| 
        &\leq
        \int_{B_\varepsilon(x)} \int_{B_\varepsilon(\xi)} \omega_\varepsilon(x-y) \omega_\varepsilon(\xi-\eta) 
        |V_1(t,y,\eta) - V_1(t,x,\xi)|\, \dd y \,\dd \eta\\
        &\leq
        \sqrt{2}\varepsilon||\nabla V_1(t)||_{L^\infty(\TT \times \RR)} \int_{B_\varepsilon(x)} \int_{B_\varepsilon(\xi)} \omega_\varepsilon(x-y) \omega_\varepsilon(\xi-\eta)\,  \dd y\,  \dd \eta\\
        &=
        \sqrt{2} ||\nabla V_1(t)||_{L^\infty(\TT \times \RR)} \varepsilon.
    \end{align*}
    Thus,
    \begin{align}\label{thm:App:Uniqueness:Proof 5}
        ||V_1^\varepsilon - V_1||_{L^1(0,T;L^\infty(\TT \times \RR)} 
        \leq \mathcal{C} \varepsilon.
    \end{align}
    We calculate for almost all $t \in [0,T]$ and any $x,\xi \in \TT \times \RR$:
    \begin{align*}
        \partial_2 g^\varepsilon(t,x,\xi)
        &=
        \RRint \RRint \omega_\varepsilon(x-y) \omega_\varepsilon(\xi - \eta) \partial_2 g(t,y,\eta)\, \dd y \, \dd \eta\\
        &\leq
        ||\partial_2g(t)||_{L^\infty(\TT \times \RR)} \RRint \RRint \omega_\varepsilon(x-y) \omega_\varepsilon(\xi - \eta) \, \dd y \, \dd \eta \\
        &= 
        ||\partial_2 g||_{L^\infty(\TT \times \RR)},
    \end{align*}
    so that, using $\partial_2 g \in L^1(0,T;L^\infty(\TT \times \RR))$,
    \begin{align}\label{thm:App:Uniqueness:Proof 6}
        ||\partial_2 g^\varepsilon||_{L^1(0,T;L^\infty(\TT \times \RR))}
        \leq \mathcal{C}.
    \end{align}
    By the same reasoning, we observe
    \begin{align}\label{thm:App:Uniqueness:Proof 7}
        ||\partial_2V_2^\varepsilon||_{L^1(0,T;L^\infty(\TT\times \RR))} + ||\nabla V_1^\varepsilon||_{L^1(0,T;L^\infty(\TT \times \RR)} \leq \mathcal{C}.
    \end{align}
    Finally, using $\eqref{Uniqueness Eq2}$ and $\eqref{Uniqueness Eq5}$, we observe that
    \begin{align}\label{thm:App:Uniqueness:Proof 8}
        ||\partial_1 V_2^\varepsilon||_{L^1(0,T;L^\infty(\TT \times \RR))}
        +
        ||\partial_1 g||_{L^1(0,T;L^\infty(\TT \times \RR))}
        \leq
        \frac{\mathcal{C}}{\sqrt{\varepsilon}}.
    \end{align}
    Now, we fix some $\varphi^\sharp \in C^\infty_c(\TT \times \RR)$ and consider for $t \in [0,T]$ the backward convection problem
    \begin{alignat}{2}\label{thm:App:Uniqueness:Proof 9}
    \left\{
        \begin{aligned}
            \partial_t \varphi^\varepsilon + \mathbf{V}^\varepsilon \cdot \nabla \varphi^\varepsilon &= g^\varepsilon \varphi^\varepsilon
            \quad &&\text{in } (0,t) \times \TT \times \RR,\\
            \varphi^\varepsilon(t,\cdot) &= \varphi^\sharp \quad &&\text{in } \TT \times \RR.
        \end{aligned}
    \right.
    \end{alignat}
    By well-known results on advection equations, $\eqref{thm:App:Uniqueness:Proof 9}$ admits a solution $\varphi^\varepsilon$ with regularity
    \begin{align*}
        \varphi^\varepsilon \in W^{1,1}([0,t];C^k_b(\TT \times \RR)) \quad \forall k \in \NN_{k\geq 1}.
    \end{align*}
    Moreover, since $\varphi^\sharp$ has compact support, there exists some compact set $L \subseteq \RR$ such that
    \begin{align*}
        \mathrm{spt} \, \varphi^\varepsilon(s,\cdot,\cdot) \subseteq L \quad \forall \, s \in [0,t].
    \end{align*}
    By a standard regularization argument via time-mollification, we are allowed to use $\varphi^\varepsilon$ as a test function in $\eqref{thm:App:Uniqueness:Proof 1}$.
    We obtain
    \begin{align}\label{thm:App:Uniqueness:Proof 10}
        \langle \nu(t), \varphi^\sharp \rangle
        &= 
        \int_0^t \langle \nu(s), 
        \partial_t \varphi^\varepsilon 
        +
        \mathbf{V} \cdot \nabla \varphi^\varepsilon
        -
        g \varphi^\varepsilon 
        \rangle \, \dd s \nonumber\\
        &=
        \int_0^t \langle \nu(s), (\mathbf{V}-\mathbf{V}^\varepsilon) \cdot \nabla \varphi^\varepsilon - (g - g^\varepsilon) \varphi^\varepsilon \rangle \, \dd s,
    \end{align}
    where we have used $\eqref{thm:App:Uniqueness:Proof 9}$ in the last equality.
    Using $\eqref{support of nu}$, we estimate
    \begin{align*}
        \langle \nu(t), \varphi^\sharp \rangle 
        \leq
        \mathcal{C}( I_1 + I_2 + I_3)
    \end{align*}
    with
    \begin{align*}
        I_1 &= \int_0^t ||(V_1 - V_1^\varepsilon) \partial_1 \varphi^\varepsilon ||_{L^\infty(\TT \times K)}\, \dd s,\\
        I_2 &= \int_0^t ||(V_2 - V_2^\varepsilon) \partial_2 \varphi^\varepsilon ||_{L^\infty(\TT \times K)}\,\dd s,\\
        I_3 &= \int_0^t || (g - g^\varepsilon) \varphi^\varepsilon ||_{L^\infty(\TT \times K)}\, \dd s.
    \end{align*}
    Applying the classical maximum principle for the advection equation $\eqref{thm:App:Uniqueness:Proof 9}$ yields in combination with $\eqref{inserted estimate}$ for any $s \in [0,t]$
    \begin{align}\label{thm:App:Uniqueness:Proof 11}
        ||\varphi^\varepsilon(s)||_{L^\infty(\TT \times \RR)}
        \leq
        ||\varphi^\sharp||_{L^\infty(\TT \times \RR)} 
        \exp\biggl( \int_0^t ||g^\varepsilon(s)||_{L^\infty(\TT \times \RR)} \, \dd s\biggr)
        \leq \mathcal{C},
    \end{align}
    so that with the convergence $\eqref{thm:App:Uniqueness:Proof 3}$
    \begin{align*}
        |I_3| \leq C \int_0^t ||g(s) - g^\varepsilon(s)||_{L^\infty(\TT \times K)} \dd s \to 0
        \quad \text{for } \varepsilon \to 0.
    \end{align*}
    For $I_1,I_2$, we need estimates for $\varphi_1:= \partial_1 \varphi^\varepsilon, \varphi_2:=\partial_2 \varphi^\varepsilon$. Differentiating $\eqref{thm:App:Uniqueness:Proof 9}$ with respect to the second spatial variable, we notice that $\varphi_2^\varepsilon$ satisfies the following advection problem
    \begin{alignat*}{2}
    \left\{
        \begin{aligned}
            \partial_t \varphi_2^\varepsilon 
            + 
            \mathbf{V}^\varepsilon \cdot \nabla \varphi_2^\varepsilon 
            &=
            \partial_2 g^\varepsilon \varphi^\varepsilon
            +
            (g^\varepsilon
            -
            \partial_2 V_2^\varepsilon )\varphi_2^\varepsilon
            \quad &&\text{in } (0,t)\times \TT \times \RR,\\
            \varphi_2^\varepsilon(t) &= \partial_2 \varphi^\sharp \quad &&\text{in } \TT \times \RR.
        \end{aligned}
    \right.
    \end{alignat*}
    Applying again a classical maximum principle for this advection problem, we obtain for any $s \in [0,t]$
    \begin{align}\label{thm:App:Uniqueness:Proof:Inserted}
        ||\varphi_2^\varepsilon(s)||_{L^\infty(\TT \times \RR)}
        &\leq
        \biggl( ||\partial_2 \varphi^\sharp||_{L^\infty(\TT \times \RR)}
        +
        \int_0^t ||\partial_2 g^\varepsilon(\tau)||_{L^\infty(\TT \times \RR)} ||\varphi(\tau)||_{L^\infty(\TT \times \RR)}\,\dd \tau \biggr)\nonumber\\
        & \qquad\times \exp\biggl( \int_0^t  ||g^\varepsilon(\tau)||_{L^\infty(\TT \times \RR)} + ||\partial_2 V_2^\varepsilon(\tau)||_{L^\infty(\TT \times \RR)} \, \dd  \tau \biggr)\nonumber \\
        &\leq 
        \mathcal{C}.
    \end{align}
    Here, we have used $\eqref{inserted estimate}$, $\eqref{thm:App:Uniqueness:Proof 6}$, $\eqref{thm:App:Uniqueness:Proof 7}$ and $\eqref{thm:App:Uniqueness:Proof 11}$.
    From relation $\eqref{thm:App:Uniqueness:Proof:Inserted}$ we deduce with the convergence $\eqref{thm:App:Uniqueness:Proof 4}$
    \begin{align*}
        |I_2| \leq \mathcal{C} \int_0^t ||V_2(s) - V_2^\varepsilon(s)||_{L^\infty(\TT \times K)} \, \dd s
        \to 0 \quad \text{for } \varepsilon \to 0.
    \end{align*}
    Differentiating $\eqref{thm:App:Uniqueness:Proof 9}$ with respect to the first variable, we infer that $\varphi_1^\varepsilon$ satisfies the following advection problem:
    \begin{alignat*}{2}
    \left\{
        \begin{aligned}
            \partial_t \varphi_1^\varepsilon + u^\varepsilon \cdot \nabla \varphi_1^\varepsilon
            &=
            \partial_1 g^\varepsilon\varphi^\varepsilon 
            -
            \partial_1V_2^\varepsilon \varphi_2^\varepsilon
            +
            (g^\varepsilon - \partial_1 V_1^\varepsilon) \varphi_1^\varepsilon 
            \quad &&\text{in } (0,t) \times \TT \times \RR,\\
            \varphi_1(t) &= \partial_1 \varphi^\sharp \quad &&\text{in } \TT \times \RR.
        \end{aligned}
    \right.
    \end{alignat*}
    Applying again the maximum principle for this advection problem, we infer for any $s \in [0,t]$ that
    \begin{align*}
        ||\varphi_1(s)||_{L^\infty(\TT \times \RR)}
        &\leq
        \biggl(
        ||\partial_1 \varphi^\sharp||_{L^\infty(\TT \times \RR)} 
        + \int_0^t 
        ||\partial_1 g^\varepsilon \varphi^\varepsilon||_{L^\infty(\TT\times \RR)}
        + 
        ||\partial_1V_2^\varepsilon \varphi_2^\varepsilon||_{L^\infty(\TT \times \RR)} 
        \, \dd \tau 
        \biggr)\\
        & \qquad  \times 
        \exp\biggl(\int_0^t (
        ||g^\varepsilon||_{L^\infty(\TT \times \RR)}
        +
        ||\partial_1V_1^\varepsilon||_{L^\infty(\TT \times \RR)}
        ) \, \dd \tau \biggr)\\
        & \leq \frac{\mathcal{C}}{\sqrt{\varepsilon}},
    \end{align*}
    where we have used $\eqref{inserted estimate}$, $\eqref{thm:App:Uniqueness:Proof 8}$, $\eqref{thm:App:Uniqueness:Proof 11}$ and $\eqref{thm:App:Uniqueness:Proof:Inserted}$.
    Using $\eqref{thm:App:Uniqueness:Proof 5}$, we obtain finally
    \begin{align*}
        |I_1|
        \leq
        \frac{\mathcal{C}}{\sqrt{\varepsilon}} \int_0^t ||V_1(s) - V_1^\varepsilon(s)||_{L^\infty(\TT \times K)} \, \dd s 
        \leq
        \frac{C}{\sqrt{\varepsilon}} \varepsilon \to 0 
        \quad \text{for } \varepsilon \to 0.
    \end{align*}
    Overall, we have shown that $\eqref{thm:App:Uniqueness:Proof 10}$ yields in the limit $\varepsilon \to 0$:
    \begin{align*}
        \langle \nu(t), \varphi^\sharp \rangle = 0.
    \end{align*}
    Since $\varphi^\sharp \in C^\infty_c(\TT \times \RR)$ was arbitrary, we have shown
    \begin{align*}
        \langle \nu(t) ,\varphi \rangle = 0 \quad \forall \, \varphi \in C^\infty_c(\TT \times \RR).
    \end{align*}
    Since $C^\infty_c(\TT\times \RR)$ lies dense in $C^0_0(\TT \times \RR)$ with respect to $||\cdot||_{L^\infty(\TT \times \RR)}$, we have shown
    \begin{align*}
        \langle \nu(t), \varphi \rangle = 0 \quad \forall \, \varphi \in C^0_0(\TT \times \RR).
    \end{align*}
    which means exactly $\nu(t) =0$. Since $t \in [0,T]$ was arbitrary we conclude $\nu = 0$.
\end{proof}

\section*{Acknowledgements}
Funded by Deutsche Forschungsgemeinschaft (DFG, German Research Foundation) under Germany's Excellence Strategy - EXC 2075 - 390740016. We acknowledge the support by the Stuttgart Center for Simulation Science (SC SimTech).
C.~R.  acknowledges funding by the Deutsche Forschungsgemeinschaft (DFG, 
German Research Foundation) - SPP 2410 Hyperbolic Balance Laws in Fluid 
Mechanics: Complexity, Scales, Randomness (CoScaRa).

\bibliographystyle{plain}
\bibliography{references}

\begin{thebibliography}{10}

\bibitem{McFadden}
D.~M. Anderson, G.~B. McFadden, and A.~A. Wheeler.
\newblock Diffuse-interface methods in fluid mechanics.
\newblock In {\em Annu. Rev. Fluid Mech.}, volume~30, pages 139--165. Annu. Rev., Pavo Alto, CA, 1998.

\bibitem{BAER1986861}
M.R. Baer and J.W. Nunziato.
\newblock A two-phase mixture theory for the deflagration-to-detonation transition (ddt) in reactive granular materials.
\newblock {\em Int. J. Multiphase Flow}, 12(6):861--889, 1986.

\bibitem{Lagoutiere}
D.~Bresch, C.~Burtea, and F.~Lagoutière.
\newblock Mathematical justification of a compressible bifluid system with different pressure laws: a continuous approach.
\newblock {\em Appl. Anal.}, 101(12):4235--4266, 2022.

\bibitem{LagoutiereSemi}
D.~Bresch, C.~Burtea, and F.~Lagoutière.
\newblock Mathematical justification of a compressible bi-fluid system with different pressure laws: A semi-discrete approach and numerical illustrations.
\newblock {\em J. Comput. Phys.}, 490:112259, 2023.

\bibitem{Bresch_BD_Entropy}
D.~Bresch and B.~Desjardins.
\newblock Some diffusive capillary models of korteweg type.
\newblock {\em C. R. Math. Acad. Sci. Paris, Section Mécanique}, 332(11):881--886, 2004.

\bibitem{Hill_Note}
D.~Bresch and M.~Hillairet.
\newblock Note on the derivation of multi-component flow systems.
\newblock {\em Proc. Amer. Math. Soc.}, 143(8):3429 -- 3443, 2015.

\bibitem{Hillairet_New_Physical}
D.~Bresch and M.~Hillairet.
\newblock A compressible multifluid system with new physical relaxation terms.
\newblock {\em Ann. Sci. \'Ec. Norm. Sup\'er. (4)}, 52(2):255--295, 2019.

\bibitem{Huang}
D.~Bresch and X.~Huang.
\newblock A multi-fluid compressible system as the limit of weak solutions of the isentropic {C}ompressible {N}avier–{S}tokes equations.
\newblock {\em Arch. Ration. Mech. Anal.}, 201(2):647--680, 2011.

\bibitem{DiPerna}
R.J. DiPerna and P.L. Lions.
\newblock Ordinary differential equations, transport theory and {S}obolev spaces.
\newblock {\em Invent. Math.}, 98(3):511--548, 1989.

\bibitem{evans2010partial}
L.C. Evans.
\newblock {\em Partial Differential Equations}.
\newblock American Mathematical Society, Providence, 2010.

\bibitem{Feireisl}
E.~Feireisl, A.~Novotny, and H.~Petzeltová.
\newblock On the existence of globally defined weak solutions to the {N}avier--{S}tokes equations.
\newblock {\em J. Math. Fluid Mech.}, 3:358--392, 2001.

\bibitem{Giesselmann}
J.~Giesselmann, C.~Lattanzio, and A.~Tzavaras.
\newblock Relative energy for the {K}orteweg theory and related {H}amiltonian flows in gas dynamics.
\newblock {\em Arch. Ration. Mech. Anal.}, 223:1427–1484, 2017.

\bibitem{HillProp}
M.~Hillairet.
\newblock Propagation of density-oscillations in solutions to the barotropic compressible {N}avier–{S}tokes system.
\newblock {\em J. Math. Fluid Mech.}, 9:343--376, 2007.

\bibitem{Lions}
Pierre-Louis Lions.
\newblock {\em Mathematical topics in fluid mechanics. {V}ol. 2}.
\newblock Oxford Science Publication, New York, 1998.

\bibitem{MANIGLIA2007601}
Stefania Maniglia.
\newblock Probabilistic representation and uniqueness results for measure-valued solutions of transport equations.
\newblock {\em J. Math. Pures Appl. (9)}, 87(6):601--626, 2007.

\bibitem{Mellet_Baro}
A.~Mellet and A.~Vasseur.
\newblock On the barotropic compressible {N}avier-{S}tokes equations.
\newblock {\em Commun. Partial Differ. Equ.}, 32(1-3):431--452, 2007.

\bibitem{Mellet}
A.~Mellet and A.~Vasseur.
\newblock Existence and uniqueness of global strong solutions for one-dimensional compressible {N}avier–{S}tokes equations.
\newblock {\em SIAM J. Math. Anal.}, 39(4):1344--1365, 2008.

\bibitem{Hantke}
S.~M{\"u}ller, M.~Hantke, and P.~Richter.
\newblock Closure conditions for non-equilibrium multi-component models.
\newblock {\em Contin. Mech. Thermodyn.}, 28(4):1157--1189, 2016.

\bibitem{MurroneFive}
A.~Murrone and H.~Guillard.
\newblock A five equation reduced model for compressible two phase flow problems.
\newblock {\em J. Comput. Phys.}, 202(2):664--698, 2005.

\bibitem{Neusser}
J.~Neusser, C.~Rohde, and V.~Schleper.
\newblock Relaxation of the {N}avier-{S}tokes-{K}orteweg equations for compressible two-phase flow with phase transition.
\newblock {\em Internat. J. Numer. Methods Fluids}, 79(12):615--639, 2015.

\bibitem{Plotnikov}
P.~Plotnikov and J.~Sokolowski.
\newblock {\em Compressible Navier-Stokes Equations}.
\newblock Birkhäuser, Basel, 2013.

\bibitem{Rohde2}
C.~Rohde.
\newblock A local and low-order {N}avier-{S}tokes-{K}orteweg system.
\newblock In {\em Nonlinear Partial Differential Equations and Hyperbolic Wave Phenomena}, volume 526, pages 315--337. Amer. Math. Soc., Providence, RI, 2010.

\bibitem{Wolff}
C.~Rohde and L.~von Wolff.
\newblock Homogenization of nonlocal {N}avier--{S}tokes--{K}orteweg {E}quations for compressible liquid-vapor flow in porous media.
\newblock {\em SIAM J. Math. Anal.}, 52(6):6155--6179, 2020.

\bibitem{SaurelGodunov}
R.~Saurel and R.~Abgrall.
\newblock A multiphase godunov method for compressible multifluid and multiphase flows.
\newblock {\em J. Comput. Phys.}, 150(2):425--467, 1999.

\bibitem{Serre}
D.~Serre.
\newblock Variations de grande amplitude pour la densite d'un fluide visqueux compressible.
\newblock {\em Phys. D}, 48(1):113--128, 1991.

\bibitem{Simon}
J.~Simon.
\newblock Compact sets in the space {$L^p(0,T;B)$}.
\newblock {\em Ann. Mat. Pura Appl. (4)}, 146:65--96, 1987.

\bibitem{Solonnikov}
V.~A. Solonnikov.
\newblock Solvability of the initial-boundary-value problem for the equations of motion of a viscous compressible fluid.
\newblock {\em J. Sov. Math.}, 14(2):1120--1133, 1980.

\bibitem{Valli}
A.~Valli.
\newblock Periodic and stationary solutions for compressible {N}avier-{S}tokes equations via a stability method.
\newblock {\em Ann. Scuola Norm. Sup. Pisa Cl. Sci. (4)}, 10(4):607--647, 1983.

\bibitem{Weinan}
E.~Weinan.
\newblock Propagation of oscillations in the solutions of 1-d compressible fluid equations.
\newblock {\em Comm. Partial Differential Equations}, 17(3-4):545--552, 1992.

\end{thebibliography}

\end{document}